\newcommand{\al}{\alpha}
\newcommand{\m}{\mu}
\newcommand{\la}{\lambda}
\newcommand{\La}{\Lambda}
\DeclareMathOperator{\st}{SST_{\la}(\mu)}
\DeclareMathOperator{\im}{Im}
\DeclareMathOperator{\Hom}{Hom}
\DeclareMathOperator{\Ext}{Ext}
\DeclareMathOperator{\D}{\mathrm{\Delta}}
\newtheorem{theorem}{Theorem}[section]
\newtheorem{lemma}[theorem]{Lemma}
\newtheorem{proposition}[theorem]{Proposition}
\newtheorem{corollary}[theorem]{Corollary}
\newtheorem{definition}[theorem]{Definition}
\theoremstyle{remark}
\newtheorem{remark}[theorem]{Remark}
\newtheorem{example}[theorem]{Example}
\newtheorem{examples}[theorem]{Examples}
\newtheorem*{example*}{Example}
\numberwithin{equation}{section}
\begin{document}
	
		\begin{abstract}Consider the general linear group $G=GL_n(K)$ defined over an infinite field $K$ of positive characteristic $p$. We denote by $\Delta(\lambda)$ the Weyl module of $G$ which corresponds to a partition $\lambda$. Let $\lambda, \mu $ be partitions of $r$ and let $\gamma$ be a partition with parts divisible by $p$. In the first main result of this paper, we find sufficient conditions on $\lambda, \mu$ and $\gamma$ so that $\Hom_G(\Delta(\lambda),\Delta(\mu)) \simeq \Hom_G(\Delta(\lambda +\gamma),\Delta(\mu +\gamma))$, thus providing an answer to a question of D. Hemmer. As corollaries we obtain stability and periodicity results for homomorphism spaces. In the second main result we find related sufficient conditions on $\lambda, \mu$  and $p$ so that $\Hom_G(\Delta(\lambda),\Delta(\mu))$ is nonzero. An explicit map is provided that corresponds to the sum of all semistandard tableaux of shape $\mu$ and weight $\lambda$.
		\end{abstract}

	\title[On stability and nonvanishing of hom spaces]{On stability and nonvanishing of homomorphism spaces between Weyl modules}

	\author[Evangelou]{Charalambos Evangelou}
\address{Department of Mathematics, University of Athens, Greece}

\email{cevangel@math.uoa.gr}

\author[Maliakas]{Mihalis Maliakas}
\address{Department of Mathematics, University of Athens, Greece}

\email{mmaliak@math.uoa.gr}

\author[Stergiopoulou]{Dimitra-Dionysia Stergiopoulou}
\address{Department of Mathematics, University of Athens, Greece}

	\curraddr{School of Applied Mathematical and Physical Sciences, National Technical University of Athens, Greece}
\email{dstergiop@math.uoa.gr}

\subjclass[2020]{20G05, 05E10}

\keywords{Weyl modules, general linear group, homomorphism spaces, stability, nonvanishing}

\date{September 1, 2024}
	\maketitle

\section{Introduction} Let $K$ be an infinite field of positive characteristic $p$ and let $G=GL_n(K)$ be the general linear group defined over $K$. We let $\Delta({\lambda})$ denote the Weyl module of $G$ corresponding to a partition $\lambda$. Since the classical papers of Carter-Luzstig \cite{CL} and Carter-Payne \cite{CP}, homomorphism spaces $\Hom_G(\Delta(\lambda), \Delta(\mu))$ between Weyl modules have attracted much attention. In those works sufficient arithmetic conditions on $\lambda, \mu$ and $p$ were found so that $\Hom_G(\Delta(\lambda), \Delta(\mu)) \neq 0$. The determination of the dimensions of these homomorphism spaces, or even when they are nonzero, seems a difficult problem. Other natural problems arise when one considers the behavior of $\Hom_G(\Delta(\lambda), \Delta(\mu))$ and higher extension groups under various operations in the representation theory of $G$, such as taking Frobenius twists of the modules \cite{CPSK}, or considering complements of the involved partitions \cite{FHKD}, or horizontal and vertical cuts \cite{FL}, \cite{D}, \cite{BG}.  

Various stability and periodicity properties in the modular representation theory of $G$ and the symmetric group that are related to adding powers of $p$ to the first parts of the involved partitions have  been studied. For example, decomposition numbers, $p$ - Kostka numbers, dimensions of various cohomology groups have been shown to exhibit such properties; see \cite{Gi}, \cite{Ha}, \cite{HK}, \cite{Hen}, \cite{NS}. In \cite[Theorem 1.1(1)]{MS4}, two of the present authors proved a periodicity property of the dimensions of $\Hom_G(\Delta(\lambda), \Delta(\mu))$ (and also higher extension groups) with respect to adding a power of $p$ to the first parts of $\lambda$ and $\mu$. In the first main theorem of the present paper (Theorem \ref{thm13})  we generalize this result by finding sufficient combinatorial hypotheses on $\lambda$ and $\mu$ so that $\dim\Hom_G(\Delta(\lambda), \Delta(\mu)) $ $=\dim\Hom_G(\Delta(\lambda +\gamma),\Delta(\mu + \gamma))$, where $\gamma$ is a partition such that each part is divisible by a  sufficiently high power of $p$. These powers are explicitly determined by $\lambda$ and $\mu$. Theorem \ref{thm13} provides an answer to a problem  of D. Hemmer \cite[Problem 5.4]{Hem2} formulated in terms of homomorphisms between Specht modules for the symmetric group. This is equivalent to the above formulation when $p>2$ via the Schur functor \cite{Gr}. 

If $k$ is a nonnegative integer and $\nu=(\nu_1,\dots,\nu_n)$ a partition, let $k\nu$ be the partition $(k\nu_1,\dots,k\nu_n).$ Let \[d_k=\dim \Hom _{G}(\Delta (\lambda + k \nu), \Delta(\mu + k \nu)).\]As an immediate corollary we have  that the sequence $d_{p}, d_{p^2}, \dots$ eventually stabilizes for any partition $\nu$ under certain conditions on $\lambda, \mu$ (Corollary \ref{stab}). Also, we show that for a particular class of partitions $\nu$, the sequence $d_{0}, d_1, \dots$ is periodic with period a power of $p$ (Corollary \ref{per}).

Our second main result (Theorem \ref{nonv1}) shows that under suitable combinatorial hypotheses on $\lambda, \mu$ and $p$ we have $\Hom_G(\Delta(\lambda), \Delta(\mu)) \neq 0$. A nonzero homomorphism $\Delta (\lambda) \to \Delta (\mu)$ is provided explicitly that corresponds to the sum of all semistandard tableaux of shape $\mu$ and weight $\lambda$. This generalizes Theorem 3.1 of \cite{MS3} that is valid for $\mu$ consisting of two parts. The second result is independent of the first, but both proofs use similar techniques. We rely on a systematic use of the classical presentation of the Weyl modules $\Delta(\lambda)$ and utilize combinatorics and computations involving tableaux.

The paper is organized as follows. In Section 2 we establish notation and gather preliminaries that will be needed later. In Section 3 we introduce a family of pairs of partitions $\lambda, \mu$ to which the two main theorems apply and we establish associated important combinatorial results that are utilized in the following sections. The proofs of the first and second main theorems are given in Section 4 and 5, respectively.

\section{Recollections}
The purpose of this section is to establish notation and recall facts that will be needed in the sequel.

 For the reader’s convenience we gather here some of the notation that will be used often. 
 \begin{center}
 	Dictionary of frequent notation
 \end{center}
{\small \begin{itemize}
	
 \item $\Lambda(n,r)$ and $\Lambda^+(n,r)$: set of sequences of length $n$ of nonnegative integers that sum to $r$ and the subset of partitions of $r$ (Subsection \ref{s2.1}).
	\item $D(\al)$, where $\alpha \in \Lambda(n,r)$: $D(\al)=D_{\al_1}V\otimes \dots \otimes D_{\al_n}V$ tensor product of divided powers of the natural $GL_n(K)$-module $V$ (Subsection \ref{s2.1}).
	\item $\pi_{\Delta(\mu)}:D(\mu) \to \Delta(\mu)$: natural projection onto the Weyl module $\Delta(\mu)$ (Subsection \ref{s2.1}).
	\item $\mathrm{SST}(\mu)$ and $\mathrm{SST}_{\alpha}(\mu)$: set of semistandard tableaux of shape $\mu$ and the subset of semistandard tableaux of shape $\mu$ and weight $\alpha$ (Subsection \ref{s2.2}).
	\item $\mathrm{RSST}(\mu)$ and $\mathrm{RSST}_{\alpha}(\mu)$: set of row semistandard tableaux of shape $\mu$ and the subset of row semistandard tableaux of shape $\mu$ and weight $\alpha$ (Subsection \ref{s2.2}).
	\item $A_T$, where $T \in \mathrm{RSST}_{\alpha}(\mu)$: the $n \times n$ matrix $A_T=(a_{ij})$, where $a_{ij}$ is equal to the number of appearances of the entry $j$ in the $i$th row of $T$ (Subsection 2.2).
	\item $[T]$, where $ T \in \mathrm{RSST}(\mu)$: $[T]=\pi_{\Delta(\mu)}(1^{(a_{11})} \cdots n^{(a_{1n})} \otimes \cdots \otimes 1^{(a_{n1})} \cdots n^{(a_{nn})})$, where $(a_{ij})$ is the matrix of $T$ (Subsection \ref{s2.2}). 
	\item $T[s,s+1]$: tableau consisting of rows $s$ and $s+1$ of $T$ (Subsection \ref{s2.2}).
	\item $\phi_T$, where $T \in \mathrm{RSST}_{\alpha}(\mu)$: the map $\phi_T: D(\alpha) \to \Delta(\mu)$ defined in Definition \ref{def2.4}.
	\item $\lambda({s,t})$, where $\lambda \in \Lambda^+(n,r)$: $\lambda({s,t})=(\lambda_1, \dots, \lambda_s+t, \lambda_{s+1}-t,\dots, \lambda_n)$ (Subsection \ref{s2.4}).
	\item $e^{\al}$, where $\alpha \in \Lambda(n,r)$: $e^{\al} =1^{(\al_1)} \otimes \cdots \otimes n^{(\al_n)} \in D(\al)$ if $\alpha =(\alpha_1, \dots, \alpha_n)$ (Subsection \ref{s2.4}).

	\item  $l_p(a)$: the least integer $i$ such that $p^i>a$ (Subsection \ref{s2.5}).
	\item $\psi_\alpha$: the map in Lemma \ref{lem12}(2).
	\item $\La^{+}(n,r)_g$: the subset of $\Lambda^+(n,r)$ defined in Definition \ref{def}.
	\item $c_s$ and $e_s$: the integers defined in Definition \ref{defce}.
	\item $\al^{\vee} =\al+\gamma$ (Subsection \ref{s4.2}).
	\item $\psi$: the map defined in Definition \ref{defpsi}.
	\item $\tau_s=t_1 + \dots +t_{s-1}$ (Subsection \ref{s5.2}).
\end{itemize}}
\subsection{Notation}\label{s2.1}
We will be working with homogeneous polynomial representations of $G=GL_n(K)$, where $K$ is an infinite field of characteristic $p>0$.  For a positive integer $r$, let $\Lambda(n,r)$ the set of sequences $\al=(\al_1, \dots, \al_n)$ of length $n$ of nonnegative integers that sum to $r$ and  $\Lambda^+(n,r)$ the subset of $\Lambda(n,r)$ consisting of partitions, that is sequences $\mu=(\mu_1, \dots, \mu_n)$ such that $\mu_1 \ge \mu_2 \ge \dots \ge \mu_n$. The length  of a partition $\mu$ is the number of nonzero parts of $\mu$ and is denoted by $\ell(\mu)$. If $m$ is a positive integer such that $m \le n$, we will consider $\La(m,r)$ and $\La^+(m,r)$ as subsets of $\La(n,r)$ and $\La^+(n,r)$, respectively by appending a string of $0$'s at the end if necessary.  

If $\al, \beta \in \Lambda(n,r)$, where $\al=(\al_1,\dots, \al_n), \ \beta = (\beta_1,\dots,\beta_n)$, we write $\al \unlhd \beta$ if $\al_1+\dots+\al_i \le \beta_1+\dots+\beta_i$ for all $i$.

We use the notation $\sum_i M_i$ for the direct sum of vector spaces $M_i$. We denote by $V=K^n$ be the natural $G$-module and $DV=\sum_{i\geq 0}D_iV$ the divided power algebra of $V$, (\cite[Section I.4]{ABW}). If $v \in V$ and $i$ is a nonnegative integer, we have the $i$th divided power $v^{(r)} \in D_iV$. We recall that if $i,j$ are nonnegative integers, then the product $v^{(i)}v^{(j)}$ of $v^{(i)}$ and $v^{(j)}$ is given by \[v^{(i)}v^{(j)}=\tbinom{i+j}{j}v^{(i+j)}\] where $\tbinom{i+j}{j}$ is the indicated binomial coefficient. If $\al=(\al_1,\dots, \al_n) \in \Lambda(n,r)$, let $D(\al)=D(\al_1,\dots,\al_n)$ be the tensor product $D_{\al_1}V\otimes \dots \otimes D_{\al_n}V$. If $\mu \in \Lambda^+(n,r)$, we denote by $\Delta(\mu)$ the corresponding Weyl module for $G$ and by \[{\pi}_{\D(\mu)} : D(\mu) \to \D(\mu)\] the natural projection.

\subsection{Semistandard tableaux}\label{s2.2}
	Consider the order $\epsilon_1 < \epsilon_2 <\dots< \epsilon_n$ on the natural basis $\{\epsilon_1,\dots,\epsilon_n\}$ of $V$. In the sequel we will denote each element $\epsilon_i$ by its subscript $i$. For a partition $\mu=(\mu_1,\dots,\mu_n) \in \Lambda^+(n,r)$, a tableau of shape $\mu$ is a filling of the diagram of $\mu$ with entries from $\{1,\dots,n\}$. A tableau is called \textit{row semistandard} if the entries are weakly increasing across the rows from left to right.  A row semistandard tableau is called \textit{semistandard} if the entries are strictly increasing in the columns from top to bottom. The set consisting of the semistandard (respectively, row semistandard) tableaux of shape $\mu$ will be denoted by $\mathrm{SST}(\mu)$ (respectively, $\mathrm{RSST}(\mu)$). The \textit{weight} of a tableau $T$ is the tuple $\alpha=(\alpha_1,\dots,\alpha_n)$, where $\alpha_i$ is the number of appearances of the entry $i$ in $T$. The set consisting of the semistandard (respectively, row semistandard) tableaux of shape $\mu$ and weight $\alpha$ will be denoted by $\mathrm{SST}_{\alpha}(\mu)$ (respectively, $\mathrm{RSST}_{\alpha}(\mu)$). For example, the following tableau of shape $\mu=(6,4)$
\[T= \begin{ytableau}
		\ 1&1&1&1&2&4\\
		\ 2&2&4&4 
	\end{ytableau} \]
is semistandard and has weight $\alpha=(4,3,0,3)$. We will use `exponential' notation for row semistandard tableaux. For the previous example we may write \[ T=\begin{matrix*}[l]
	1^{(4)} 2 4 \\
	2^{(2)}4^{(2)}  \end{matrix*}. \] 
\begin{remark}\label{obv}For latter use we make the following remark. If a tableau $T$ of shape $(\mu_1,\dots,\m_n)$ and weight of the form $(\la_1,\dots,\la_{s}+t, \la_{s+1}-t, \dots,\la_n)$, has the property that all elements $1,2,\dots,s$ appear in the first $s$ rows of $T$, then \[t \le (\mu_1 - \lambda_1)+\dots+(\mu_s - \lambda_s).\] Indeed, let $k$ be the number of entries of $T$ that are less than or equal to $s$. Since these entries appear in the first $s$ rows of $T$ we have $k \le \mu_1+\cdots +\mu_s.$ On the other hand, since the weight of $T$ is $(\la_1,\dots,\la_{s}+t, \la_{s+1}-t, \dots,\la_n)$, we have $k=\lambda_1+ \cdots +\lambda_s + t.$ Thus $t \le (\mu_1 - \lambda_1)+\dots+(\mu_s - \lambda_s)$.
\end{remark}
For $T \in \mathrm{RSST}_{\alpha}(\mu)$ we may associate an $n \times n$ matrix $A_T=(a_{ij})$ with nonnegative integer entries by letting $a_{ij}$ be the number of appearances of the entry $j$ in the $i$th row of $T$. It is understood that $a_{ij}=0$ if $i>\ell(\mu)$. Let  $M_{n}(\mathbb{N})$ be the set of $n \times n$ matrices with nonnegative integer entries.  For $A=(a_{ij}) \in M_{n}(\mathbb{N})$, we have the sequences $A^{(1)}, A^{(2)} \in \La(n)$ of column sums and row sums of $A$ defined by \begin{center}
	$A^{(1)}=(\sum_ia_{i1},\dots,\sum_ia_{in})$, \  $A^{(2)}=(\sum_ja_{1j},\dots,\sum_ja_{nj})$.
\end{center} It is clear from the definitions that we have a bijection \begin{equation}\label{bijection1}\mathrm{RSST}_{\alpha}(\mu) \to \{A \in M_{n}(\mathbb{N}): A^{(1)} =\al, A^{(2)} =\mu \},\  T \mapsto A_T.\end{equation} For $B \in  M_{n}(\mathbb{N})$ such that $B^{(1)} =\al, B^{(2)} =\mu$, we denote by $T_B \in \mathrm{RSST}_{\alpha}(\mu)$ the unique row semistandard tableau such that $A_{T_B}=B$. 

If $T \in \mathrm{RSST}_{\alpha}(\mu)$ has corresponding matrix $A_T=(a_{ij})$, we may consider the element \[1^{(a_{11})} \cdots n^{(a_{1n})} \otimes \cdots \otimes 1^{(a_{n1})} \cdots n^{(a_{nn})} \in D(\mu).\] The image of this element under the natural projection ${\pi}_{\D(\mu)} : D(\mu) \to \D(\mu)$ will be denoted by $[T]$, \[[T]=\pi_{\Delta(\mu)}(1^{(a_{11})} \cdots n^{(a_{1n})} \otimes \cdots \otimes 1^{(a_{n1})} \cdots n^{(a_{nn})}).\] We recall from \cite{ABW} the following result.
\begin{theorem}\label{standard}
	A basis of the vector space $\Delta(\mu)$, where $\mu \in \La^{+}(n,r)$, is the set $\{[T]: T \in \mathrm{SST}(\mu)\}.$
\end{theorem}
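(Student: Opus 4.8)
The plan is to prove the two assertions — that $\{[T]:T\in\mathrm{SST}(\mu)\}$ spans $\Delta(\mu)$ and that it is linearly independent — separately, working over $\mathbb{Z}$ (or an arbitrary commutative ring) so that the statement for the field $K$ follows by base change.

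\textbf{Spanning.} Each $D_kV$ has basis $\{1^{(b_1)}\cdots n^{(b_n)}:b_i\ge 0,\ \sum_i b_i=k\}$, so $D(\mu)$ has basis the tensor monomials $1^{(a_{11})}\cdots n^{(a_{1n})}\otimes\cdots\otimes 1^{(a_{n1})}\cdots n^{(a_{nn})}$ with $\sum_j a_{ij}=\mu_i$; by \eqref{bijection1} (now allowing arbitrary weight) these monomials are indexed by $\mathrm{RSST}(\mu)$, so $\{[T]:T\in\mathrm{RSST}(\mu)\}$ spans $\Delta(\mu)$. One then \emph{straightens}, using the classical presentation of $\Delta(\mu)$: its defining relations are generated by the box maps
\[
\begin{gathered}
\square_{i,t}\colon\ D_{\mu_i+t}V\otimes D_{\mu_{i+1}-t}V\ \xrightarrow{\ \Delta\otimes\mathrm{id}\ }\ D_{\mu_i}V\otimes D_tV\otimes D_{\mu_{i+1}-t}V\\
\xrightarrow{\ \mathrm{id}\otimes m\ }\ D_{\mu_i}V\otimes D_{\mu_{i+1}}V,
\end{gathered}
\]
inserted in positions $i,i+1$ and identity elsewhere ($1\le i<\ell(\mu)$, $t\ge 1$; here $\Delta$ and $m$ are the comultiplication and multiplication of $DV$). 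Fix a well-order $<$ on $\mathrm{RSST}(\mu)$, e.g.\ the lexicographic order on the word obtained by reading the columns of $T$ top-to-bottom and left-to-right. If $T$ is not semistandard, locate the first column at which strict increase down a column fails, say between rows $i$ and $i+1$, and apply the Garnir-type relation obtained there from the $\square_{i,t}$; it has the offending monomial $T$ as leading term with invertible ($\pm 1$) coefficient, so $[T]$ is rewritten as a $\mathbb{Z}$-linear combination of $[T'']$ with $T''<T$. Since $<$ is a well-order and $\mathrm{SST}(\mu)$ is exactly the set of tableaux to which no such reduction applies, iterating expresses every $[T]$ in the $\mathbb{Z}$-span of $\{[T']:T'\in\mathrm{SST}(\mu)\}$.

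\textbf{Linear independence.} The cleanest route combines the above with a dimension count in characteristic $0$. The integral straightening yields a surjection $\mathbb{Z}^{\#\mathrm{SST}(\mu)}\twoheadrightarrow\Delta_{\mathbb{Z}}(\mu)$ from the integral Weyl module (the cokernel of the box maps over $\mathbb{Z}$), and, since cokernels commute with base change, $\Delta_{\mathbb{Z}}(\mu)\otimes_{\mathbb{Z}}\mathbb{Q}\cong\Delta_{\mathbb{Q}}(\mu)$, which in characteristic zero is irreducible of dimension $\#\mathrm{SST}(\mu)$ (it is the Schur module, with character $s_\mu(x_1,\dots,x_n)$, by Weyl's character formula / the monomial expansion of $s_\mu$). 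Hence the kernel of $\mathbb{Z}^{\#\mathrm{SST}(\mu)}\twoheadrightarrow\Delta_{\mathbb{Z}}(\mu)$ vanishes after $-\otimes_{\mathbb{Z}}\mathbb{Q}$, so it is torsion, so it is zero; thus $\Delta_{\mathbb{Z}}(\mu)$ is free of rank $\#\mathrm{SST}(\mu)$ on the semistandard monomials, and applying $-\otimes_{\mathbb{Z}}K$ gives the theorem. (Alternatively, one argues entirely over $\mathbb{Z}$ by pushing the leading-term analysis further: one shows the box-map relations, reduced to leading-term form, are in bijection with the non-semistandard monomials of $D(\mu)$, so that $\ker\pi_{\Delta(\mu)}$ is a free direct summand complementary to the span of the semistandard monomials.)

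\textbf{Main obstacle.} The heart of the matter is the straightening law: writing down the box-map relations, choosing a well-order $<$ compatible with the reduction, and verifying the triangularity statement — that reducing at the first column violation produces only strictly smaller tableaux and that the relevant Garnir-type relation has invertible leading coefficient, so the process terminates and works integrally. For the alternative integral argument one needs the sharper fact that no relation has a semistandard leading term, i.e.\ that the leading terms of the box-map relations exhaust exactly the non-semistandard monomials. This is delicate but standard combinatorial bookkeeping with divided-power comultiplication.
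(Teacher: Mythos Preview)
The paper does not prove this statement; it is simply quoted from \cite{ABW}. Your outline is one of the standard approaches and is correct in spirit. Two remarks. First, the assertion that the chosen relation has $T$ as leading term with coefficient $\pm1$ is the genuine content of the straightening law in the divided-power setting and is not automatic: multiplication in $DV$ introduces binomial coefficients, so one must choose the relation with care (compare Lemma~\ref{lem15}, where the non-semistandard tableau appears with coefficient $1$ on the left but the right-hand side carries products of binomial coefficients, and Lemma~\ref{insertrows} for the reduction to two rows). You rightly flag this as the main obstacle. Second, the characteristic-zero step---that the cokernel-defined $\Delta_{\mathbb Q}(\mu)$ has dimension exactly $\#\mathrm{SST}(\mu)$---presupposes an identification of this cokernel with the classical irreducible $GL_n(\mathbb Q)$-module of highest weight $\mu$; this is true but is itself a short separate argument (e.g.\ via the Young symmetrizer realization), not something one can read off the presentation alone. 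The proof in \cite{ABW} avoids the characteristic-zero detour by a direct inductive filtration argument over $\mathbb Z$, which is closer in spirit to your parenthetical ``alternative'' than to your main line.
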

Let $T \in \mathrm{RSST}(\mu)$ and consider the tableau \[T[s,s+1]\] consisting of rows $s$ and $s+1$ of $T$, where $s \in \{1,2,\dots,m-1\}$. We have the partition $\mu[s,s+1]=(\mu_s, \mu_{s+1})$ and the corresponding Weyl module $\D(\mu[s,s+1])$. From the analog of \cite[Lemma II.2.3]{ABW} for Weyl modules we obtain the following result.
\begin{lemma}\label{insertrows}
Let $T \in \mathrm{RSST}(\mu)$. If in $\D(\mu[s,s+1])$ we have $[T[s,s+1]]=\sum_ic_i[T[s,s+1]_i],$ where $c_i \in K$, then in $\D(\mu)$ we have $[T]=\sum_ic_i[T_i],$ where $T_i$ is the tableau obtained from $T$ by replacing rows $s$ and $s+1$ with $T[s,s+1]_i$.
\end{lemma}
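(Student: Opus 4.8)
The plan is to reduce the statement about $\Delta(\mu)$ to the two-row statement in $\Delta(\mu[s,s+1])$ by means of the multilinearity of the divided power algebra and the naturality of the quotient maps. First I would recall the explicit description of the Weyl module $\Delta(\mu)$ as the image of the composite map, or equivalently as $D(\mu)$ modulo the submodule generated by the ``straightening'' (Garnir-type) relations coming from pairs of adjacent rows; this is exactly the content of the analog of \cite[Lemma II.2.3]{ABW} for Weyl modules, which identifies the kernel of $\pi_{\D(\mu)}$ with the sum over $s$ of the images of the maps that ``box'' rows $s$ and $s+1$ together. So the kernel of $\pi_{\D(\mu)}$ contains, for each pair of adjacent rows, the corresponding two-row relations tensored with the identity on the remaining factors.

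The key step is then the following compatibility. Fix $s$ and write $D(\mu) \cong D(\mu_1,\dots,\mu_{s-1}) \otimes D(\mu_s,\mu_{s+1}) \otimes D(\mu_{s+2},\dots,\mu_n)$, and correspondingly factor the element $e_T \in D(\mu)$ representing $[T]$ as $e' \otimes e_{T[s,s+1]} \otimes e''$, where $e'$ and $e''$ are the monomials read off from the rows of $T$ other than $s, s+1$, and $e_{T[s,s+1]}$ is the monomial in $D(\mu_s,\mu_{s+1})$ associated to the two-row tableau $T[s,s+1]$. The hypothesis says that in $\D(\mu[s,s+1])$ we have $\pi_{\D(\mu[s,s+1])}(e_{T[s,s+1]}) = \sum_i c_i \, \pi_{\D(\mu[s,s+1])}(e_{T[s,s+1]_i})$, i.e.\ $e_{T[s,s+1]} - \sum_i c_i e_{T[s,s+1]_i}$ lies in $\ker \pi_{\D(\mu[s,s+1])}$. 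Tensoring on the left with $e'$ and on the right with $e''$ — an operation that, by the identification of the kernel recalled above, sends $\ker \pi_{\D(\mu[s,s+1])}$ into $\ker \pi_{\D(\mu)}$ — we conclude that $e_T - \sum_i c_i e_{T_i} \in \ker \pi_{\D(\mu)}$, which is precisely the asserted identity $[T] = \sum_i c_i [T_i]$ in $\D(\mu)$, since $T_i$ is by definition the tableau with rows $s, s+1$ replaced by $T[s,s+1]_i$ and all other rows unchanged, so that $e_{T_i} = e' \otimes e_{T[s,s+1]_i} \otimes e''$.

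I expect the main (and really only) obstacle to be bookkeeping: making precise that the presentation of $\D(\mu)$ is ``local'' in adjacent rows, so that a relation valid in the two-row Weyl module, when padded by fixed tensor factors in the other rows, is still a relation in $\D(\mu)$. This is exactly what the cited analog of \cite[Lemma II.2.3]{ABW} provides — it describes $\ker \pi_{\D(\mu)}$ as generated by the images of the ``place $k$ boxes from row $s{+}1$ into row $s$'' maps, each of which factors through the corresponding two-row map tensored with the identity on the remaining divided-power factors. Once that is in hand, the proof is just the tensor-factor manipulation above; no combinatorics of tableaux is needed beyond matching up the monomials $e_T$, $e_{T_i}$ with their two-row constituents.
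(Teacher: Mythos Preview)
Your proposal is correct and is essentially what the paper does: the paper gives no argument beyond the sentence ``From the analog of \cite[Lemma II.2.3]{ABW} for Weyl modules we obtain the following result,'' and your plan is precisely an unpacking of that citation --- the locality of the relations defining $\ker\pi_{\Delta(\mu)}$ in adjacent rows, followed by tensoring a two-row relation with the fixed monomials in the remaining rows.
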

\subsection{The maps $\phi_T$ and weight subspaces of $\Delta(\mu)$}\label{s2.3} Let $\al \in \La(n,r)$, $\mu \in \La^{+}(n,r)$ and $T \in  \mathrm{RSST}_{\al}(\mu)$ with corresponding matrix $A=A_T=(a_{ij})$. We have $A^{(1)}=\al $ and $A^{(2)}=\mu$. For each $j=1,2,\dots,n$, consider the indicated component \begin{equation}\label{D} \Delta: D(\al_j) \to D(a_{1j},a_{2j},\dots,a_{nj}),\end{equation} of the comultiplication map of the Hopf algebra $DV$. If $x \in D(\al_j)$, the image $\D(x) \in D(a_{1j},a_{2j},\dots,a_{nj})$ is a sum of elements of the form $x_s(a_{1j},1) \otimes x_s(a_{2j},2) \otimes \cdots \otimes x_s(a_{nj},n)$, where for each $j$ we have $x_s(a_{ij},i) \in D(a_{ij})$. By a slight abuse of notation we will write $x_s(a_{ij})$ in place of $x_s(a_{ij},i)$. Thus we will write $\Delta(x) = \sum_{s}x_s(a_{1j}) \otimes x_s(a_{2j}) \otimes \cdots \otimes x_s(a_{nj})$.
\begin{definition}\label{def2.4}
	Let $T \in \mathrm{RSST}_{\al}(\mu)$. With the previous notation, define the map $ \phi_{T}:D(\al) \to \Delta(\mu)$ that sends  $x_1 \otimes x_2 \otimes \dots\otimes x_n$ to \[\pi_{\D(\mu)} \big( \sum_{s_1,\dots,s_n}x_{1s_1}(\al_{11}) \cdots x_{ns_n}(\al_{1n}) \otimes  \cdots \otimes x_{1s_1}(\al_{n1})\cdots x_{ns_n}(\al_{nn}) \big). \]
\end{definition}
\begin{remark}\label{remarkphi}
	We note that $\phi_T(1^{(\al_1)} \otimes \cdots \otimes n^{(\al_n)} )=[T]$ if $T \in \mathrm{RSST}_{\al}(\mu)$, where $\al = (\al_1,\dots,\al_n).$
\end{remark}
\begin{example*}
	Let $n=3$ and consider the partition $\mu=(6,5)$ and the row semistandard tableau \[ T=\begin{matrix*}[l]
		1^{(2)} 2^{(4)} \\
		12^{(2)}3^{(2)}  \end{matrix*} \in \mathrm{RSST}_{\al}(\mu), \] 
that has weight $\alpha=(3,6,2)$. Then the matrix of $T$ is \[A_T=\begin{pmatrix*}
	2& 4& 0 \\ 1& 2& 2 \\ 0& 0& 0
 \end{pmatrix*}.\]Let $x=x_1 \otimes x_2 \otimes x_3 \in D(\alpha)$, where $x_1=2^{(3)}, \; x_2= 1^{(3)}2^{(3)}, \;  x_3=3^{(2)}.$
Computing the images of the $x_j$ under the maps (\ref{D}) we have \begin{align*}
	&D(3) \to D(2,1), 2^{(3)} \mapsto 2^{(2)} \otimes 2,\\&
	D(6) \to D(4,2), 1^{(3)}2^{(3)} \mapsto 1^{(3)}2 \otimes 2^{(2)} +1^{(2)}2^{(2)} \otimes 12 + 12^{(3)} \otimes 1^{(2)},\\&
	D(2) \to D(2), 3^{(2)} \mapsto 3^{(2)}.
\end{align*} Hence for the image of $x$ under the map $\phi_T: D(\alpha) \to \Delta(\mu)$ of Definition \ref{def2.4} we have \begin{align*}
\phi_T(x)&={\pi}_{\D(\mu)}(2^{(2)}1^{(3)}2 \otimes 2 2^{(2)}  3^{(2)}) + {\pi}_{\D(\mu)}(2^{(2)} 1^{(2)}2^{(2)}\otimes 2 12 3^{(2)}) \\& \; \; \;  \; +{\pi}_{\D(\mu)}(2^{(2)} 12^{(3)}\otimes 2 1^{(2)} 3^{(2)}) \\&= \tbinom{2+1}{1} \tbinom{1+2}{2}	\begin{bmatrix*}[l]
	1^{(3)}2^{(3)}  \\
	2^{(3)} 3^{(2)}
\end{bmatrix*} +\tbinom{2+2}{2} \tbinom{1+1}{1}	\begin{bmatrix*}[l]
1^{(2)}2^{(4)}  \\
12^{(2)} 3^{(2)}
\end{bmatrix*} + \tbinom{2+3}{3}	\begin{bmatrix*}[l]
12^{(2)}3^{(3)}  \\
1^{(2)}2 3^{(2)}
\end{bmatrix*}.
\end{align*}
The binomial coefficients come from the multiplication in the divided power algebra $DV$.	
\end{example*}
Let $\D(\mu)_{\al}$ be the weight subspace of $\Delta(\mu)$ that corresponds to the weight $\al$. We recall from \cite[Section 2]{AB} the following.
\begin{proposition}[\cite{AB}] Let $\al \in \Lambda(n,r)$ and $\mu \in \Lambda^+(n,r)$. Then there is an isomorphism of vector spaces $\D(\mu)_{\al} \simeq \Hom_G(D(\al), \Delta(\mu))$ such that $[T] \mapsto \phi_T$ for all $T \in \mathrm{RSST}_{\al}(\mu)$. Moreover, a basis of $\Hom_G(D(\al), \Delta(\mu))$ is the set $\{\phi_T: T \in \mathrm{SST}_{\al}(\mu)\}.$
\end{proposition}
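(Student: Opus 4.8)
The plan is to prove the following more general statement and then specialize: for every homogeneous polynomial $G$-module $M$ of degree $r$, the map
\[
\Phi_M\colon \Hom_G(D(\al),M)\longrightarrow M_\al,\qquad \Phi_M(f)=f(e^\al),\quad e^\al=1^{(\al_1)}\otimes\cdots\otimes n^{(\al_n)},
\]
is an isomorphism, natural in $M$. It is well defined because $e^\al$ lies in the weight space $D(\al)_\al$ and $G$-homomorphisms preserve weight spaces, and it is patently natural. I would also record at the outset that each $\phi_T$ does belong to $\Hom_G(D(\al),\Delta(\mu))$: unwinding Definition \ref{def2.4}, $\phi_T$ is a composite of the relevant components of the comultiplication of the Hopf algebra $DV$, a permutation of tensor factors, the multiplication maps of $DV$, and $\pi_{\Delta(\mu)}$, all of which are $G$-equivariant. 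Moreover, by Remark \ref{remarkphi}, $\Phi_{\Delta(\mu)}(\phi_T)=\phi_T(e^\al)=[T]$ for $T\in\mathrm{RSST}_{\al}(\mu)$; hence, once $\Phi_{\Delta(\mu)}$ is known to be an isomorphism, its inverse automatically sends $[T]\mapsto\phi_T$, which is the first assertion of the Proposition.

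First I would prove $\Phi_M$ injective, for which it suffices to show that $D(\al)$ is generated as a $G$-module by $e^\al$. Since $K$ is infinite, for each $i$ the divided powers $w^{(\al_i)}$ ($w\in V$), expanded in the monomial basis of $D_{\al_i}V$, have as coefficients the degree-$\al_i$ monomials in the coordinates of $w$, and these monomials are linearly independent functions on $V$; so the $w^{(\al_i)}$ span $D_{\al_i}V$, and therefore the elements $w_1^{(\al_1)}\otimes\cdots\otimes w_n^{(\al_n)}$ span $D(\al)$. On the other hand the $G$-submodule generated by $e^\al$ contains $g\cdot e^\al=(ge_1)^{(\al_1)}\otimes\cdots\otimes(ge_n)^{(\al_n)}$ for all $g\in G$; as $GL_n(K)$ is Zariski-dense in $M_n(K)$ and $(w_1,\dots,w_n)\mapsto w_1^{(\al_1)}\otimes\cdots\otimes w_n^{(\al_n)}$ is a polynomial map, the span of $\{g\cdot e^\al:g\in G\}$ equals the span of all $w_1^{(\al_1)}\otimes\cdots\otimes w_n^{(\al_n)}$, i.e.\ all of $D(\al)$.

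The essential point is the surjectivity of $\Phi_M$, which I would obtain from the Schur algebra $S=S(n,r)$, identifying homogeneous polynomial $G$-modules of degree $r$ with (left) $S$-modules. Let $\xi_\al\in S$ be the idempotent projecting onto the $\al$-weight space, so that $\xi_\al M=M_\al$ for every $S$-module $M$ and $\xi_\al e^\al=e^\al$; then $s\xi_\al\mapsto s\,e^\al$ is a well-defined homomorphism of left $S$-modules $S\xi_\al\to D(\al)$, surjective by the previous paragraph. Since $\dim_K S\xi_\al=\prod_i\binom{n+\al_i-1}{\al_i}=\dim_K D(\al)$ (a standard count for the former \cite{Gr}, both sides being the number of $n\times n$ matrices over $\mathbb{N}$ with column sums $\al$), this surjection is an isomorphism $D(\al)\cong S\xi_\al$. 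In particular $D(\al)$ is a projective $S$-module, and the canonical isomorphism $\Hom_S(S\xi_\al,M)\cong\xi_\al M=M_\al$, $f\mapsto f(\xi_\al)$, is exactly $\Phi_M$ under the identification $e^\al\leftrightarrow\xi_\al$; thus $\Phi_M$ is a natural isomorphism. I expect this step — checking that $D(\al)$ is precisely large enough to represent the weight-space functor — to be the main obstacle; the rest is formal. (A more self-contained route, presumably that of \cite{AB}, would instead construct the inverse of $\Phi_M$ directly, producing from $m\in M_\al$ a $G$-map $D(\al)\to M$ by means of the comultiplication of $DV$ and the structure of $M$, exactly in the style of Definition \ref{def2.4}.)

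Finally I would specialize to $M=\Delta(\mu)$, obtaining $\Hom_G(D(\al),\Delta(\mu))\cong\Delta(\mu)_\al$ with inverse $[T]\mapsto\phi_T$ as noted above. For the basis statement, observe that for $U\in\mathrm{SST}(\mu)$ the element $[U]$ is the image under the $G$-equivariant map $\pi_{\Delta(\mu)}$ of a weight vector of $D(\mu)$ whose weight is the weight of $U$, so $[U]\in\Delta(\mu)_{\mathrm{wt}(U)}$; hence the standard basis $\{[U]:U\in\mathrm{SST}(\mu)\}$ of $\Delta(\mu)$ (Theorem \ref{standard}) restricts to a basis $\{[U]:U\in\mathrm{SST}_{\al}(\mu)\}$ of the weight space $\Delta(\mu)_\al$. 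Transporting this basis through $\Phi_{\Delta(\mu)}^{-1}$ shows that $\{\phi_U:U\in\mathrm{SST}_{\al}(\mu)\}$ is a basis of $\Hom_G(D(\al),\Delta(\mu))$.
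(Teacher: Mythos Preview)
Your argument is correct. The paper itself does not prove this proposition: it merely recalls it from \cite[Section 2]{AB}, so there is no ``paper's own proof'' to compare against beyond the citation.

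Your route via the Schur algebra identification $D(\al)\cong S\xi_\al$ is the standard one (essentially Green \cite{Gr}), and every step checks out: the cyclicity argument for $D(\al)$ is fine (and is in fact invoked elsewhere in the paper via \cite[Theorem A4]{SY}); the dimension count $\dim S\xi_\al=\prod_i\binom{n+\al_i-1}{\al_i}=\dim D(\al)$ is correct, both sides counting $n\times n$ matrices over $\mathbb{N}$ with column sums $\al$; and the deduction of the basis statement from Theorem~\ref{standard} by restricting to a single weight space is exactly right. The approach in \cite{AB} is more hands-on in the spirit you anticipate in your parenthetical remark, but yours is equally valid and arguably cleaner for the purposes of this paper, since the Schur algebra viewpoint also immediately gives projectivity of $D(\al)$ and naturality of $\Phi_M$, both of which are useful background for the diagram-chasing in Section~4.
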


\subsection{Presentation of $\Delta(\la)$}\label{s2.4}
First some notation. If $\al=(\al_1,\dots,\al_n) \in \La(n,r)$ and $s, t$ are integers such that $1 \le s \le n-1$ and $1 \le t \le \al_{s+1} $, let us denote the sequence $(\al_1,\dots,\al_s+t, \al_{s+1}-t,\dots,\al_n) \in \La(n,r)$ by $\al({s,t})$. 

Recall from \cite[Theorem II.3.16]{ABW} that we have the following presentation of $\Delta(\la)$, \begin{equation}\label{pres1}
\sum_{s=1}^{n-1}\sum_{t=1}^{\lambda_{s+1}}D(\la(s,t)) \xrightarrow{\square_{\la}} D(\lambda) \xrightarrow{\pi_{\D(\la)}} \Delta(\lambda) \to 0,\end{equation}
where the restriction of $ \square_{\la} $ to the summand $D(\la(s,t))$ is the composition
\begin{equation}\label{boxmap}
\square_{\la, s,t}:D(\la(s,t)) \xrightarrow{1\otimes\cdots \otimes \Delta \otimes \cdots 1}D(\lambda_1,\dots,\lambda_s,t,\lambda_{s+1}-t,\dots,\lambda_m) \xrightarrow{1\otimes\cdots \otimes \eta \otimes \cdots 1} D(\lambda),
\end{equation}
where $\Delta:D(\lambda_s+t) \to D(\lambda_s,t)$ and $\eta:D(t,\lambda_{s+1}-t) \to D(\lambda_{s+1})$ are the indicated components of the comultiplication and multiplication respectively of the Hopf algebra $DV$. 

If $\al=(\al_1,\dots,\al_n) \in \La(n,r)$, we let \[e^{\al}=1^{(\al_1)} \otimes \cdots \otimes n^{(\al_n)} \in D(\al).\] Let $T \in \mathrm{RSST}_{\la}(\mu)$. Then \[ T=\begin{matrix*}[l]
	1^{(a_{11})}2^{(a_{12})} \cdots n^{(a_{1n})} \\
	
	\hspace{40pt}\cdots\\
	1^{(a_{n1})} 2^{(a_{n2})}\cdots n^{(a_{nn})}  \end{matrix*}, \]
where $A_T=(a_{ij})$. In many occasions we will have subscripts of the form $a_{ij}$. For clarity we will often write $a_{i,j}$ in place of $a_{ij}$ when $i$ and $j$ are more complicated expressions. 

The next lemma concerns the composition of the maps $\square_{\la, s, t}$ and $\phi_T$.
\begin{lemma}\label{comp2}
	Let  $T \in \mathrm{RSST}_{\la}(\mu)$, $A_T=(a_{ij})$, $s \in \{1,2,\dots,n-1\}$ and $t \in \{1,2,\dots,\la_{s+1}\}$. With the previous notation, the image of $e^{\la(s,t)}$ under the composition $D(\la(s,t)) \xrightarrow{\square_{\la, s,t}} D(\la) \xrightarrow{\phi_T} \Delta(\mu) $ is equal to \begin{equation}\label{eq11}
		\sum_{t_1,\dots,t_{n}}\tbinom{a_{1s}+t_1}{t_1}\tbinom{a_{2s}+t_2}{t_2} \cdots \tbinom{a_{ns}+t_n}{t_n}[T(s,t;t_1,\dots,t_{n})],\end{equation}
	where the matrix  of the row semistandard tableau $T(s,t;t_1,\dots,t_{n})$ is obtained from $(a_{ij})$ by replacing the columns $s$ and $s+1$ by the columns \[\begin{pmatrix}a_{1s}+t_1 \\ \vdots \\ a_{ns}+t_n \end{pmatrix}, \begin{pmatrix}a_{1,s+1}-t_1 \\ \vdots \\ a_{n,s+1}-t_n \end{pmatrix}\]
	and the sum is over all $t_1,\dots,t_{n} \in \mathbb{N}$ such that \begin{align}\label{ineq111}&t_{1}+\dots+t_{n}=t, \; a_{1,s+1}-t_1 \ge 0, \; \dots, \; a_{n,s+1}-t_{n} \ge 0.\end{align}
\end{lemma}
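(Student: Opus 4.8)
The plan is to unwind both maps explicitly on the generator $e^{\la(s,t)}$ and track what happens column by column. Recall that $\square_{\la,s,t}$ first applies the comultiplication $\Delta\colon D(\lambda_s+t)\to D(\lambda_s,t)$ in the $s$-th tensor factor, sending $1^{(\lambda_s)}$ placed in slot $s$ together with a fresh $(s{+}1)^{(t)}$ that is then multiplied into slot $s+1$. Concretely $e^{\la(s,t)}=1^{(\al_1)}\otimes\cdots\otimes (s)^{(\lambda_s+t)}\otimes (s{+}1)^{(\lambda_{s+1}-t)}\otimes\cdots$, and under $1\otimes\cdots\otimes\Delta\otimes\cdots\otimes 1$ the factor $(s)^{(\lambda_s+t)}$ goes to $(s)^{(\lambda_s)}\otimes (s)^{(t)}$; the map $\eta$ then multiplies $(s)^{(t)}$ into the $(s{+}1)$-st slot, producing $(s)^{(t)}(s{+}1)^{(\lambda_{s+1}-t)}$ there, which by the divided-power product rule equals $\binom{t+\lambda_{s+1}-t}{t}$ times a single divided power only if the letters coincide — here they do not, so it stays as the two-letter monomial $(s)^{(t)}(s{+}1)^{(\lambda_{s+1}-t)}$ in $D(\lambda_{s+1})$. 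So $\square_{\la,s,t}(e^{\la(s,t)})$ is the element of $D(\lambda)$ whose $s$-th slot is $(s)^{(\lambda_s)}$ and whose $(s{+}1)$-st slot is the two-letter divided power $(s)^{(t)}(s{+}1)^{(\lambda_{s+1}-t)}$, all other slots being $1^{(\al_1)},\dots,n^{(\al_n)}$ as in $e^\la$.

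Next I would feed this into $\phi_T$ using Definition \ref{def2.4}. The map $\phi_T$ acts column-wise: for each $j$ it applies the comultiplication $\Delta\colon D(\al_j)\to D(a_{1j},\dots,a_{nj})$ to the $j$-th tensor factor and then reassembles by rows. For all columns $j\neq s+1$ the $j$-th factor of $\square_{\la,s,t}(e^{\la(s,t)})$ is a single divided power $j^{(\al_j)}$ (note the $s$-th factor is still $s^{(\lambda_s)}$ with $\lambda_s=\al_s$ since $T$ has weight $\lambda$ and the letter is single), so its image under $\Delta$ is the single term $j^{(a_{1j})}\otimes\cdots\otimes j^{(a_{nj})}$, contributing exactly the original columns of $A_T$. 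For column $s+1$ the factor is $s^{(t)}(s{+}1)^{(\lambda_{s+1}-t)}$; applying the comultiplication $\Delta\colon D(\al_{s+1})\to D(a_{1,s+1},\dots,a_{n,s+1})$ and using compatibility of $\Delta$ with the product, $\Delta(s^{(t)}) = \sum s^{(t_1)}\otimes\cdots\otimes s^{(t_n)}$ over $t_1+\cdots+t_n=t$ while $\Delta((s{+}1)^{(\lambda_{s+1}-t)})=\sum (s{+}1)^{(u_1)}\otimes\cdots$; matching the $i$-th tensor slot to have total degree $a_{i,s+1}$ forces $u_i = a_{i,s+1}-t_i$, giving the $i$-th slot the monomial $s^{(t_i)}(s{+}1)^{(a_{i,s+1}-t_i)}$ and the constraints $a_{i,s+1}-t_i\ge 0$ of \eqref{ineq111}.

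Now I would reassemble by rows as $\phi_T$ prescribes: the $i$-th row of the resulting element of $D(\mu)$ is the product over columns of the pieces landing in slot $i$, namely $1^{(a_{i1})}\cdots s^{(a_{is})}\cdot\big(s^{(t_i)}(s{+}1)^{(a_{i,s+1}-t_i)}\big)\cdot (s{+}2)^{(a_{i,s+2})}\cdots n^{(a_{in})}$. The two adjacent divided powers of the letter $s$ multiply: $s^{(a_{is})}s^{(t_i)}=\binom{a_{is}+t_i}{t_i}s^{(a_{is}+t_i)}$, which produces the binomial coefficients $\binom{a_{is}+t_i}{t_i}$ in \eqref{eq11}, and the $(s{+}1)$-entry of row $i$ becomes $a_{i,s+1}-t_i$. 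Thus after applying $\pi_{\D(\mu)}$ the $i$-th row of the corresponding tableau is exactly the row-semistandard row with column-$s$ entry $a_{is}+t_i$ and column-$(s{+}1)$ entry $a_{i,s+1}-t_i$ and all other entries unchanged — i.e. the tableau $T(s,t;t_1,\dots,t_n)$ — with coefficient $\prod_i\binom{a_{is}+t_i}{t_i}$, summed over the admissible $(t_1,\dots,t_n)$. Collecting these terms gives \eqref{eq11}. The only genuinely delicate point is the bookkeeping of which tensor slot each comultiplication summand lands in and the claim that matching degrees in slot $i$ pins $u_i=a_{i,s+1}-t_i$ uniquely; this is where I expect to have to be careful, but it is a direct consequence of coassociativity/cocommutativity of $DV$ and the fact that $D(a_{1,s+1},\dots,a_{n,s+1})$ is multigraded. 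Everything else is the divided-power product formula $v^{(i)}v^{(j)}=\binom{i+j}{j}v^{(i+j)}$ recalled in Subsection \ref{s2.1}.
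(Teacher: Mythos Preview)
Your proof is correct and follows essentially the same route as the paper's: compute $\square_{\la,s,t}(e^{\la(s,t)})$ to obtain the element with $s^{(t)}(s+1)^{(\la_{s+1}-t)}$ in slot $s+1$, apply the component comultiplication to that two-letter monomial (your degree-matching argument is exactly how the paper's stated formula for $\Delta(x_{s+1})$ is justified), and then collect the binomials from the divided-power product $s^{(a_{is})}s^{(t_i)}$. The only cosmetic difference is that you spell out the multiplicativity of $\Delta$ and the degree constraint $u_i=a_{i,s+1}-t_i$ explicitly, whereas the paper writes the resulting sum directly.
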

\begin{proof}
From the definition of $\square_{\la, s,t}$ given in (\ref{boxmap}), we have \begin{align*}\square_{\la, s,t}(e^{\la(s,t)})=&1^{(\lambda_1)}\otimes \cdots \otimes s^{(\lambda_s)}\otimes s^{(t)}(s+1)^{(\lambda_{s+1}-t)}\\&\otimes (s+2)^{(\lambda_{s+2})} \otimes \cdots \otimes n^{(\lambda_n)}.\end{align*}
Let $x_{s+1} = s^{(t)}(s+1)^{(\lambda_{s+1}-t)}$. For the map \[\Delta:  D(\lambda_{s+1}) \to D(a_{1,s+1}, a_{2,s+1} \dots, a_{n, s+1})\] given (\ref{D}) we have \[\Delta(x_{s+1})=\sum_{t_1,\dots,t_n}s^{(t_1)}(s+1)^{(a_{1,s+1}-t_1)}\otimes \cdots \otimes s^{(t_n)}(s+1)^{(a_{n,s+1})-t_n},\] where the sum is as in (\ref{ineq111}). Now Definition \ref{def2.4} yields  \begin{align*}
	\phi_T \circ \square_{\la, s,t}(e^{\la(s,t)})&=\sum_{t_1,\dots, t_n} \begin{bmatrix*}[l]
		1^{(a_{11})}\cdots s^{(a_{1s})}s^{(t_1)}(s+1)^{(a_{1,s+1}-t_{s+1})} \cdots n^{(a_{1n})} \\
		\hspace{40pt}\cdots\\
		1^{(a_{n1})} \cdots s^{(a_{ns})}s^{(t_n)}(s+1)^{(a_{n,s+1}-t_{s+1})}\cdots n^{(a_{nn})}  \end{bmatrix*} \\&=\sum_{t_1,\dots,t_{n}}\tbinom{a_{1s}+t_1}{t_1} \cdots \tbinom{a_{ns}+t_n}{t_n}\\&\hspace{40pt} 
		\begin{bmatrix*}[l]
			1^{(a_{11})}\cdots s^{(a_{1s}+t_1)}(s+1)^{(a_{1,s+1}-t_{s+1})} \cdots n^{(a_{1n})} \\
			\hspace{40pt}\cdots\\
			1^{(a_{n1})} \cdots s^{(a_{ns}+t_n)}(s+1)^{(a_{n,s+1}-t_{s+1})}\cdots n^{(a_{nn})}  \end{bmatrix*}\\&=\sum_{t_1,\dots,t_{n}}\tbinom{a_{1s}+t_1}{t_1} \cdots \tbinom{a_{ns}+t_n}{t_n}[T(s,t;t_1,\dots,t_{n})].
\end{align*} \end{proof}
We note that $T(s,t;t_1,\dots,t_{n})$ has weight $\la(s,t)$.
\begin{corollary}\label{corhom} Let $T \in \mathrm{RSST}_{\la}(\mu)$, $A_T=(a_{ij})$, $s \in \{1,2,\dots,n-1\}$ and $t \in \{1,2,\dots,\la_{s+1}\}$. The image of $\phi_T$ under map 
\[\Hom_G(D(\lambda),\Delta(\mu)) \xrightarrow{\Hom_G(\square_{\la, s, t},\Delta({\mu}))} \Hom_G(D(\la({s,t})),\Delta(\mu)),\] 
is equal to $\sum_{t_1,\dots,t_{n}}\tbinom{a_{1s}+t_1}{t_1}\tbinom{a_{2s}+t_2}{t_2} \cdots \tbinom{a_{ns}+t_n}{t_n}\phi_{T(s,t;t_1,\dots,t_{n})}$,
where the sum is over all $t_1,\dots,t_{n} \in \mathbb{N}$ satisfying conditions (\ref{ineq111}) of the previous lemma.
\begin{proof}
	Since $D(\la(s,t))$ is a cyclic $G$-module generated by the element $e^{\la(s,t)}$,  see \cite[Theorem A4]{SY}, it suffices to show that $\Hom_G(\square_{\la, s, t},\Delta({\mu}))(\phi_T)(e^{\la(s,t)})$$=f(e^{\la(s,t)}),$ where $f=\sum_{t_1,\dots,t_{n}}\tbinom{a_{1s}+t_1}{t_1}\tbinom{a_{2s}+t_2}{t_2} $ $\cdots \tbinom{a_{ns}+t_n}{t_n}\phi_{T(s,t;t_1,\dots,t_{n})}$. This equality follows from the previous lemma and Remark \ref{remarkphi}.
\end{proof}
\end{corollary}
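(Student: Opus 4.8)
The quantity to be computed is, by definition of the contravariant functor $\Hom_G(-,\Delta(\mu))$ on morphisms, the composite $\phi_T \circ \square_{\la,s,t} \colon D(\la(s,t)) \to \Delta(\mu)$. So the plan is to identify this composite with the map $f := \sum_{t_1,\dots,t_n}\tbinom{a_{1s}+t_1}{t_1}\cdots\tbinom{a_{ns}+t_n}{t_n}\,\phi_{T(s,t;t_1,\dots,t_n)}$, the sum ranging over the tuples satisfying (\ref{ineq111}) and therefore finite. Note first that $f$ is a well-defined element of $\Hom_G(D(\la(s,t)),\Delta(\mu))$: each $T(s,t;t_1,\dots,t_n)$ is a row semistandard tableau of weight $\la(s,t)$, so $\phi_{T(s,t;t_1,\dots,t_n)}$ is a $G$-homomorphism with the correct source and target, and $f$ is a $K$-linear combination of such maps.

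First I would reduce the identity $\Hom_G(\square_{\la,s,t},\Delta(\mu))(\phi_T) = f$ to a single evaluation. The module $D(\la(s,t))$ is cyclic as a $G$-module, generated by $e^{\la(s,t)}$; this is \cite[Theorem A4]{SY}. Hence two $G$-homomorphisms out of $D(\la(s,t))$ coincide as soon as they agree on $e^{\la(s,t)}$, and it is enough to check that $(\phi_T \circ \square_{\la,s,t})(e^{\la(s,t)}) = f(e^{\la(s,t)})$.

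Next I would read off both sides. The left-hand side is exactly what Lemma \ref{comp2} computes: it equals $\sum_{t_1,\dots,t_n}\tbinom{a_{1s}+t_1}{t_1}\cdots\tbinom{a_{ns}+t_n}{t_n}\,[T(s,t;t_1,\dots,t_n)]$, the sum being over the index set of (\ref{ineq111}). For the right-hand side, I evaluate each summand of $f$ at $e^{\la(s,t)}$: since $T(s,t;t_1,\dots,t_n)$ has weight $\la(s,t)$, Remark \ref{remarkphi} (applied with $\al = \la(s,t)$) gives $\phi_{T(s,t;t_1,\dots,t_n)}(e^{\la(s,t)}) = [T(s,t;t_1,\dots,t_n)]$. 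Therefore $f(e^{\la(s,t)})$ is the very same sum, and the two sides agree, which proves the corollary.

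All the real content sits in Lemma \ref{comp2}; the corollary is a repackaging of that computation at the level of $\Hom$-spaces. The only steps needing attention are cosmetic: keeping the order of the composite straight when applying the contravariant $\Hom$ functor, invoking cyclicity so that checking on the generator $e^{\la(s,t)}$ suffices, and verifying the weight bookkeeping (that $T(s,t;t_1,\dots,t_n)$ has weight $\la(s,t)$) which licenses the use of Remark \ref{remarkphi}. I do not expect a genuine obstacle beyond these routine verifications.
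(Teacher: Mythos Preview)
Your proof is correct and follows essentially the same approach as the paper: reduce to evaluation at the cyclic generator $e^{\la(s,t)}$ via \cite[Theorem A4]{SY}, then apply Lemma~\ref{comp2} for the left-hand side and Remark~\ref{remarkphi} for the right-hand side. Your version is simply more explicit about the intermediate bookkeeping (well-definedness of $f$, the weight of $T(s,t;t_1,\dots,t_n)$), but the argument is the same.
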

\begin{remark}\label{remnewcond}	We will need a slightly different expression for the sum (\ref{eq11}) in Lemma \ref{comp2} in the following special case. With the notation and assumptions of Lemma \ref{comp2}, suppose in addition that $A_T=(a_{ij})$ is upper triangular and that $s$ satisfies $s<m$, where $m =\ell(\mu)$. Define \begin{center}
		$\tau_0=0$ and $\tau_i=t_1+\dots+t_i$ for all $i=1,\dots,s.$
	\end{center} Then (\ref{eq11}) is equal to  
	\begin{equation}\label{newsum}
		\sum_{t_1,\dots,t_{s}}\tbinom{a_{1s}+t_1}{t_1}\tbinom{a_{2s}+t_2}{t_2} \cdots \tbinom{a_{ss}+t_s}{t_s}[T(s,t;t_1,\dots,t_{s},t-\tau_s,0\dots,0],\end{equation} where the sum is over all $t_1,\dots,t_s \in \mathbb{N}$ such that 
	\begin{equation}\label{newcond}
		t-\tau_{i-1}-\sum_{u=i+1}^{s+1}a_{u,s+1} \le t_i \le \min\{a_{i,s+1}, t-\tau_{i-1}\}, \; i=1,\dots,s.
	\end{equation}
	Indeed, since $(a_{ij})$ is upper triangular, the inequalities in (\ref{ineq111}) yield $t_{s+2}=\dots=t_n=0$ and thus $t_{s+1}=t-\tau_{s}$. It is clear that (\ref{ineq111}) imply the right inequalities of (\ref{newcond}). Also from (\ref{ineq111}) we have $\sum_{u=i+1}^{s+1}a_{u,s+1} \ge \sum_{u=i+1}^{s+1}t_{u}$ $=t-\tau_i$ and hence the left inequalities of (\ref{newcond}) hold. Conversely, suppose (\ref{newcond}) hold. By defining $t_{s+1}=t-\tau_s$ it follows from the right inequality (for $i=s$) that $t_{s+1} \ge 0$. It is clear that the right inequalities (\ref{newcond}) imply the inequalities in (\ref{ineq111}).
	\end{remark} 

We will need the following from \cite[Lemma 4.2]{MS3}.
\begin{lemma}\label{lem15}Let $\nu \in \La^+(2,c)$, $\nu=(\nu_1,\nu_2)$ and $T=\begin{matrix*}[l]
		1^{(a_1)}2^{(a_2)}  \cdots   n^{(a_n)} \\
		1^{(b_1)}2^{(b_2)}  \cdots  n^{(b_n)}
	\end{matrix*} \in \mathrm{Tab} (\nu)$.
	Then we have the following identities in $\Delta{(\nu)}$.
	\begin{enumerate}
		\item If $a_1+b_1>\nu_1$, then $[T]=0$.
		\item If $a_1+b_1 \le \nu_1$, then 
		\[[T]=(-1)^{b_1}\sum_{k_2,\dots,k_n}\tbinom{b_2+k_2}{b_2}\cdots\tbinom{b_n+k_n}{b_n}
		\begin{bmatrix*}[l]
			1^{(a_1+a_2)}2^{(a_2-k_2)}  \cdots   n^{(a_n-k_n)} \\
			2^{(b_2+k_2)}  \cdots  n^{(b_n+k_n)}
		\end{bmatrix*},
		\] where the sum ranges over all  $k_2,\dots,k_n \in \mathbb{N}$
		such that  $k_2+\dots+k_n=b_1 $ and $k_s \le a_s$ for all $s=2,\dots,n$.	\end{enumerate}	
\end{lemma}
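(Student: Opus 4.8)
The plan is to argue by induction on the number $b_1$ of entries equal to $1$ in the second row of $T$, exploiting the presentation (\ref{pres1}) of $\Delta(\nu)$; for a two-part shape the only available pair is $s=1$, with $t$ ranging over $1,\dots,\nu_2$. If $b_1=0$ there is nothing to prove: the hypothesis of (1) cannot hold, since the first row has only $\nu_1$ cells and so $a_1\le\nu_1$, while the sum in (2) collapses to its single summand $[T]$. So assume $b_1\ge 1$ and that both parts of the lemma are known for every tableau of shape $\nu$ whose second row contains strictly fewer than $b_1$ ones.

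The key input is a single relation furnished by (\ref{pres1}). Since $1\le b_1\le\nu_2$, we may apply $\square_{\nu,1,b_1}$ to the element $w\otimes y\in D(\nu_1+b_1,\nu_2-b_1)$ with $w=1^{(a_1+b_1)}2^{(a_2)}\cdots n^{(a_n)}$ and $y=2^{(b_2)}\cdots n^{(b_n)}$; since $\pi_{\Delta(\nu)}\circ\square_\nu=0$, its image is $0$ in $\Delta(\nu)$. Evaluating this image---comultiply $w$ into its $D_{\nu_1}V\otimes D_{b_1}V$ component, which introduces no coefficients, and then multiply the right half of that component back into $y$, which produces binomial factors through the rule $r^{(p)}r^{(q)}=\tbinom{p+q}{q}r^{(p+q)}$ of the divided power algebra---yields the identity
\[
\sum_{\substack{j+l_2+\dots+l_n=b_1\\ 0\le l_i\le a_i}} \tbinom{l_2+b_2}{b_2}\cdots\tbinom{l_n+b_n}{b_n}
\begin{bmatrix*}[l]
1^{(a_1+b_1-j)}2^{(a_2-l_2)}\cdots n^{(a_n-l_n)}\\
1^{(j)}2^{(l_2+b_2)}\cdots n^{(l_n+b_n)}
\end{bmatrix*}=0
\]
in $\Delta(\nu)$. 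The summand with $j=b_1$ forces $l_2=\dots=l_n=0$, has coefficient $1$, and equals $[T]$; hence $[T]$ equals minus the sum of the remaining summands, each of which is a tableau of shape $\nu$ whose second row has $j<b_1$ ones and the sum of whose multiplicities of $1$ in the two rows is $(a_1+b_1-j)+j=a_1+b_1$.

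It remains to split into the two cases of the statement. If $a_1+b_1>\nu_1$, each remaining summand has its two multiplicities of $1$ summing to $a_1+b_1>\nu_1$ and has fewer than $b_1$ ones in its second row, hence vanishes by the inductive hypothesis (1); therefore $[T]=0$, which is (1). If $a_1+b_1\le\nu_1$, each remaining summand likewise has its two multiplicities of $1$ summing to $a_1+b_1\le\nu_1$, so the inductive hypothesis (2) rewrites it, up to a sign $(-1)^j$ and binomial factors, as a combination of tableaux whose first row is $1^{(a_1+b_1)}2^{(a_2-m_2)}\cdots n^{(a_n-m_n)}$ and whose second row is $2^{(m_2+b_2)}\cdots n^{(m_n+b_n)}$ with $m_2+\dots+m_n=b_1$ and $m_i\le a_i$---these being exactly the tableaux on the right-hand side of (2). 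Substituting these expansions back and collecting the coefficient of a fixed such tableau (writing $m_i=l_i+k_i$, so that $l_2+\dots+l_n=b_1-j$), one is reduced to an alternating binomial sum which, by the subset-of-a-subset identity $\tbinom{l+b}{b}\tbinom{m+b}{l+b}=\tbinom{m+b}{b}\tbinom{m}{l}$ together with $\sum_{l=0}^{m}(-1)^l\tbinom{m}{l}=0$ for $m\ge1$ (applicable because $\sum_i m_i=b_1\ge1$), evaluates to exactly $(-1)^{b_1}\prod_i\tbinom{m_i+b_i}{b_i}$---the coefficient asserted in (2). This closes the induction. The only genuine difficulty is the bookkeeping in this last step: one must manage three nested layers of summation---the $t=b_1$ comultiplication, the inductive rewriting of each term, and the collection of identical tableaux---and confirm that the alternating binomial sums collapse as claimed; once the identity $\tbinom{l+b}{b}\tbinom{m+b}{l+b}=\tbinom{m+b}{b}\tbinom{m}{l}$ is brought in, the remaining computation is routine.
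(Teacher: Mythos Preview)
Your argument is correct. The paper does not give its own proof of this lemma; it simply quotes the result from \cite[Lemma~4.2]{MS3}, so there is nothing in the present paper to compare against directly. Your induction on $b_1$, using a single relation coming from $\square_{\nu,1,b_1}$ applied to $1^{(a_1+b_1)}2^{(a_2)}\cdots n^{(a_n)}\otimes 2^{(b_2)}\cdots n^{(b_n)}$, is the natural route and is exactly the kind of straightening computation one would expect; in fact it matches the strategy used in \cite{MS3}.

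Two small remarks. First, the displayed formula in the paper's statement has an evident typo: the exponent $1^{(a_1+a_2)}$ in the first row of the right-hand side should be $1^{(a_1+b_1)}$, as the row-length condition forces and as you implicitly use. Second, your final paragraph compresses the coefficient calculation slightly more than is comfortable: after applying the subset-of-a-subset identity $\tbinom{l_i+b_i}{b_i}\tbinom{m_i+b_i}{l_i+b_i}=\tbinom{m_i+b_i}{b_i}\tbinom{m_i}{l_i}$ and factoring out $\prod_i\tbinom{m_i+b_i}{b_i}$, the remaining sum over the $l_i$ and over $j<b_1$ is most cleanly handled either by Vandermonde (summing $\prod_i\tbinom{m_i}{l_i}$ over $\sum_i l_i=b_1-j$ to get $\tbinom{b_1}{j}$, then using $\sum_{j=0}^{b_1-1}(-1)^j\tbinom{b_1}{j}=-(-1)^{b_1}$) or, equivalently, by the factorisation $\sum_{(l_i)}(-1)^{\sum l_i}\prod_i\tbinom{m_i}{l_i}=\prod_i(1-1)^{m_i}=0$. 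Either way the answer is $(-1)^{b_1}\prod_i\tbinom{m_i+b_i}{b_i}$, as you claim. Spelling out one of these two routes would make the final step fully transparent.
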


\subsection{Binomial coefficients}\label{s2.5}
Our convention for binomial coefficients is that $\tbinom{a}{b}=0$ if $b>a$ or $b<0$.

If $a$ is a positive integer, let $l_p(a)$ be the least integer $i$ such that $p^i>a$. We will need the following well known divisibility properties of binomial coefficients. See, for example, Lemma 22.4 and Corollary 22.5 of \cite{Jam}.
\begin{lemma}\label{bin1}
	Let $a \ge b \ge 1$ be integers. \begin{enumerate}
		\item If $d$ is a positive integer such that $ p^{l_p(b)}$ divides $d$, then $\tbinom{a+d}{b} \equiv \tbinom{a}{b} \mod p $.
		\item $p$ divides all of $\tbinom{a+1}{1}, \tbinom{a+2}{2}, \dots, \tbinom{a+b}{b}$ if and only if $p^{l_{p}(b)}$ divides $a+1$.
	\end{enumerate}
\end{lemma}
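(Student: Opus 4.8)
The plan is to deduce both statements from the single elementary fact that $(1+x)^{p}\equiv 1+x^{p}\pmod p$ in $\mathbb{F}_p[[x]]$, and hence $(1+x)^{p^{k}}\equiv 1+x^{p^{k}}\pmod p$ for every $k\ge 0$ (equivalently, one may invoke Lucas' theorem, but the generating-function form is the most economical here). Throughout, write $l=l_p(b)$, so that by definition $p^{\,l-1}\le b<p^{\,l}$.

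\emph{Part (1).} Write $d=p^{\,l}d'$. Then in $\mathbb{F}_p[[x]]$,
\[
(1+x)^{d}=\bigl((1+x)^{p^{\,l}}\bigr)^{d'}\equiv\bigl(1+x^{p^{\,l}}\bigr)^{d'},
\]
which is a power series in $x^{p^{\,l}}$; since $b<p^{\,l}$, the only monomial of degree $\le b$ occurring in it is the constant term $1$. Comparing the coefficients of $x^{b}$ on the two sides of $(1+x)^{a+d}=(1+x)^{a}(1+x)^{d}$, the only surviving contribution is the degree-$b$ part of $(1+x)^{a}$ times the constant term of $(1+x)^{d}$, which yields $\binom{a+d}{b}\equiv\binom{a}{b}\pmod p$.

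\emph{Part (2).} Use the identity $\sum_{j\ge 0}\binom{a+j}{j}x^{j}=(1-x)^{-(a+1)}$, so that the hypothesis "$p\mid\binom{a+j}{j}$ for $j=1,\dots,b$" is exactly the statement that $(1-x)^{-(a+1)}\equiv 1$ modulo $\bigl(p,\,x^{\,b+1}\bigr)$. If $p^{\,l}\mid a+1$, write $a+1=p^{\,l}d'$ and argue as above that $(1-x)^{-(a+1)}\equiv(1-x^{p^{\,l}})^{-d'}\pmod p$, a power series in $x^{p^{\,l}}$ all of whose non-constant terms have degree $\ge p^{\,l}>b$; this gives the required vanishing. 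Conversely, suppose $p^{\,l}\nmid a+1$ and write $a+1=p^{\,e}u$ with $p\nmid u$, so that $e\le l-1$. Then
\[
(1-x)^{-(a+1)}\equiv(1-x^{p^{\,e}})^{-u}=\sum_{k\ge 0}\binom{u+k-1}{k}x^{p^{\,e}k}\pmod p,
\]
whose coefficient of $x^{p^{\,e}}$ equals $u\not\equiv 0\pmod p$. Since $1\le p^{\,e}\le p^{\,l-1}\le b$, the exponent $j=p^{\,e}$ lies in $\{1,\dots,b\}$ and satisfies $p\nmid\binom{a+j}{j}$, which establishes the contrapositive.

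The only steps requiring any care are the manipulations of inverses in $\mathbb{F}_p[[x]]$ (all power series involved have constant term $1$, so are units) and the inequality $p^{\,l-1}\le b$, which is immediate from the definition of $l_p$. The main, and rather modest, obstacle is the converse half of (2): one must pin down a \emph{specific} exponent $j\le b$ at which the coefficient is a unit mod $p$, and the choice $j=p^{\,e}$ with $p^{\,e}$ the exact power of $p$ dividing $a+1$ works precisely because $e\le l_p(b)-1$ forces $p^{\,e}\le b$.
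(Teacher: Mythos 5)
Your proof is correct. Note, however, that the paper does not actually prove this lemma: it simply cites Lemma 22.4 and Corollary 22.5 of James's book \cite{Jam}, where the argument proceeds via base-$p$ expansions (essentially Lucas' theorem). Your generating-function approach, built entirely on the Frobenius identity $(1+x)^{p}\equiv 1+x^{p}\pmod p$ in $\mathbb{F}_p[[x]]$, is a clean self-contained alternative. In part (1) the convolution argument is sound: since $b<p^{\,l_p(b)}$, the only term of $(1+x)^{d}\equiv(1+x^{p^{\,l_p(b)}})^{d/p^{\,l_p(b)}}$ contributing to degree $b$ is the constant term. In part (2) the key step in the converse is correctly identified: you exhibit the explicit index $j=p^{e}$ (with $p^{e}\,\|\,(a+1)$ and $e\le l_p(b)-1$, so $p^{e}\le p^{\,l_p(b)-1}\le b$) at which $\binom{a+j}{j}\equiv u\not\equiv 0\pmod p$, and the inversion of power series with unit constant term is legitimate. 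The only marginal gain of James's route is that it avoids passing to $\mathbb{F}_p[[x]]$; your route has the advantage of dispatching both parts from one identity and making the witness index in the converse of (2) completely transparent.
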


We will also need the following identities that were used in \cite{MS3}. 
\begin{lemma}\label{bin2} \;\begin{enumerate}
		
		\item (Vandermonde) Let $a_1, \dots, a_s \in \mathbb{N}$ and $a=a_1+\cdots+a_s$. Then \\$\sum_{t_1,\cdots,t_s}$ $\tbinom{a_1}{ t_1}\cdots \tbinom{a_s}{t_s}=\tbinom{a}{t},$
			where the sum ranges over all $t_1,\dots,t_s \in \mathbb{N} $ such that $t_1+\cdots+t_s = t.$

		\item Let $a,b,c \in \mathbb{N}$  such that $ b \le a$. Then
		$\sum_{j=0}^{c}(-1)^{c-j}\tbinom{a+j}{j}\tbinom{b}{c-j}$ $=\tbinom{a-b+c}{c}$ $= \sum_{j=0}^{c}(-1)^{j}\tbinom{a+c-j}{c-j}\tbinom{b}{j}.$
	\end{enumerate}
\end{lemma}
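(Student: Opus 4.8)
The plan is to prove the two parts separately; both are classical facts about binomial coefficients, so no representation theory enters.

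For part (1) I would extract coefficients from a product of binomial series. In $\mathbb{Z}[x]$ we have
\[
(1+x)^{a} \;=\; \prod_{i=1}^{s}(1+x)^{a_i} \;=\; \prod_{i=1}^{s}\Bigl(\,\sum_{t_i\ge 0}\tbinom{a_i}{t_i}x^{t_i}\Bigr),
\]
and comparing the coefficient of $x^{t}$ on the two sides gives $\tbinom{a}{t}=\sum\tbinom{a_1}{t_1}\cdots\tbinom{a_s}{t_s}$, the sum over $t_1,\dots,t_s\in\mathbb{N}$ with $t_1+\dots+t_s=t$. The paper's convention $\tbinom{a}{b}=0$ for $b<0$ or $b>a$ is precisely what makes this comparison literal. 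Alternatively one may induct on $s$, the case $s=2$ being the classical Vandermonde convolution.

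For part (2) the main tool is upper negation: for $a\in\mathbb{N}$ and $j\ge 0$, $\tbinom{a+j}{j}=(-1)^{j}\tbinom{-a-1}{j}$ with $\tbinom{-a-1}{j}$ the generalized binomial coefficient. Writing $(-1)^{c-j}=(-1)^{c}(-1)^{j}$ and using $(-1)^{j}(-1)^{j}=1$, the first sum becomes
\[
\sum_{j=0}^{c}(-1)^{c-j}\tbinom{a+j}{j}\tbinom{b}{c-j}
=(-1)^{c}\sum_{j\ge 0}\tbinom{-a-1}{j}\tbinom{b}{c-j}
=(-1)^{c}\tbinom{b-a-1}{c},
\]
the last equality being the generalized Vandermonde convolution $\sum_{j}\tbinom{x}{j}\tbinom{y}{c-j}=\tbinom{x+y}{c}$ (a polynomial identity in $x,y$ that specializes to part (1) at natural numbers), applied with $x=-a-1$ and $y=b$. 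Applying upper negation a second time, $\tbinom{b-a-1}{c}=(-1)^{c}\tbinom{a-b+c}{c}$, so the first sum equals $\tbinom{a-b+c}{c}$. For the second expression, the substitution $j\mapsto c-j$ turns $\sum_{j=0}^{c}(-1)^{j}\tbinom{a+c-j}{c-j}\tbinom{b}{j}$ into $\sum_{k=0}^{c}(-1)^{c-k}\tbinom{a+k}{k}\tbinom{b}{c-k}$, which is exactly the first sum; hence the two are equal.

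I do not expect a real obstacle: the only delicate points are the sign bookkeeping and reconciling the paper's convention $\tbinom{a}{b}=0$ (for a nonnegative integer $a$) with the generalized binomials $\tbinom{x}{j}$ having a negative top entry, which agree term by term on everything that occurs. Should one wish to avoid generalized binomials, part (2) also follows by a short induction on $c$ via Pascal's rule, but the route above is cleaner.
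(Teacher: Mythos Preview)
Your argument is correct. The paper itself does not prove Lemma~\ref{bin2}; it merely states the identities and refers to \cite{MS3} where they were used, so there is no proof in the paper to compare against. Your treatment---generating functions for part~(1), upper negation plus the generalized Vandermonde convolution and the substitution $j\mapsto c-j$ for part~(2)---is the standard route and is carried out cleanly.
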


\section{The set $P(\mu)$ and combinatorics}
	
	The key definition of this paper is the following. \begin{definition}\label{defP}Fix $\mu\in \La^+(n,r)$ and let $m=\ell(\mu)$. If $m=1$, let $P(\mu) =\La(n,r)$. If $m \ge 2$, let $P(\mu)$ consist of all $\al \in \La(n,r)$ such that \begin{equation}\label{in3.1}\mu_1 +\mu_2 + \dots +\widehat{\mu}_{i-1}+\mu_{i} \le \al_1+\al_2+\dots+\al_{i-1},\end{equation}
	for all $i=2,\dots,m$, where $\widehat{\mu}_{i-1}$ means that ${\mu}_{i-1}$ is omitted. \end{definition}
\begin{example*}Suppose $\mu \in \Lambda^+(n,r)$ has length $m=4$ and let $ \alpha \in \Lambda(n,r)$.  According to the above definition we have that 
	$ \alpha \in P(\mu)$ if and only if 
\begin{align*} \mu_2 &\le \alpha_1, \\ \mu_1+\mu_3 &\le \alpha_1 +\alpha_2, \\ \mu_1 + \mu_2 +\mu_4 &\le \alpha_1 +\alpha_2 + \alpha_3. \end{align*}	
Let $T$ be semistandard tableau of shape $\mu$ and weight $\alpha$, where $\alpha \in P(\mu)$. 
Then $T$ looks like 
\begin{center}
	\begin{tikzpicture}[scale=0.7]
		\draw (1,0) -- (12,0) -- (12,1) -- (1,1) -- (1,0);
		\draw (1,-1) -- (9,-1) -- (9,0) -- (1,0) -- (1,-1);
		\draw (1,-2) -- (6,-2) -- (6,-1) -- (1,-1) -- (1,-2);
		\draw (1,-3) -- (4,-3) -- (4,-2) -- (1,-2) -- (1,-3);
		\draw[] (9.8,0) --(9.8,1);
		\draw[] (7.3,-1) --(7.3,0);
		\draw[] (4.5,-2) --(4.5,-1);
		\node (A) at (13,0.5) {$\mu_1$};
		\node (B) at (13,-0.5) {$\mu_2$};
		\node (C) at (13,-1.5) {$\mu_3$};
		\node (C) at (13,-2.5) {$\mu_4$};
		\node (K) at (5.5,0.5) {$1$};
		\node (X) at (10,0.5) {};
		\node (Y) at (0.8,0.5) {};
			\node (Z) at (0.8,-0.5) {};
		\draw [-,dashed] (K) -- (X);
		\draw [-,dashed] (K) -- (Y);
		\node (K2) at (4.2,-0.5) {$2$};
		\node (X2) at (7.5,-0.5) {};
		\node (Y2) at (0.8,-0.5) {};
			\node (K32) at (2.7,-1.5) {$3$};
		\node (X32) at (4.6,-1.5) {};
		\node (Y32) at (0.8,-1.5) {};
		\draw [-,dashed] (K2) -- (X2);
		\draw [-,dashed] (K2) -- (Y2);
		
			\draw [-,dashed] (K32) -- (X32);
		\draw [-,dashed] (K32) -- (Y32);
		\node (Adot) at (11,0.5) {$\dots$};
		\node (Bdot) at (8.2,-0.5) {$\dots$};
		\node (Cdot) at (5.3,-1.5) {$\dots$};
		\node (Ddot) at (2.6,-2.5) {$\dots$};
	\end{tikzpicture}
\end{center}
meaning that the number of $i$'s in the $i$th row is at least $\mu_{i+1}$, where $i=1,2,3$. This example is generalized in Lemma \ref{lem12}(1) below. \end{example*}

We have the following properties of $P(\mu)$.	
	\begin{lemma}\label{rem11}Let $\mu\in \La^+(n,r)$ and $m=\ell(\mu) \ge 2$. If $\al \in P(\mu)$, then the following hold. \begin{enumerate} \item $\alpha_{i} \ge (\mu_1+\dots+\mu_{i-1})-(\alpha_1 +\dots+\alpha_{i-1})$ for all $i=2,\dots,m-1$. \item If $\al \unlhd \mu$, then $\alpha_i \ge \mu_{i+1}$ for all $i=1,2,\dots,m-1$.  \item If $\beta \in \La(n,r)$ and $\al \unlhd \beta$, then $\beta \in P(\mu)$. \item If $\gamma \in \Lambda^+(n)$ and $\ell(\gamma) \le m$, then $\alpha+\gamma \in P(\mu +\gamma)$.\end{enumerate} \end{lemma}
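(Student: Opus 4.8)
The plan is to verify the four statements directly from Definition \ref{defP}, exploiting that the defining inequalities \eqref{in3.1} involve only partial sums of $\mu$ and $\alpha$. Throughout write $S_j=\alpha_1+\dots+\alpha_j$ and $M_j=\mu_1+\dots+\mu_j$, so that membership $\alpha\in P(\mu)$ reads $M_i-\mu_{i-1}+\mu_i\le S_{i-1}$, equivalently $M_{i-2}+\mu_i\le S_{i-1}$, for $i=2,\dots,m$ (with $M_0=0$).

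For part (1), fix $i$ with $2\le i\le m-1$ and apply \eqref{in3.1} with index $i+1$: this gives $\mu_1+\dots+\widehat{\mu}_i+\mu_{i+1}\le S_i=S_{i-1}+\alpha_i$, i.e. $M_{i-1}+\mu_{i+1}\le S_{i-1}+\alpha_i$, whence $\alpha_i\ge M_{i-1}-S_{i-1}$, which is exactly the claimed bound. (Note this uses the hypothesis $i\le m-1$ so that index $i+1\le m$ is legal.) For part (2), assume in addition $\alpha\unlhd\mu$, so $S_{i-1}\le M_{i-1}$; combined with part (1), $\alpha_i\ge M_{i-1}-S_{i-1}\ge M_{i-1}-M_{i-1}$ is useless, so instead I feed $\alpha\unlhd\mu$ into \eqref{in3.1} directly: for $1\le i\le m-1$, inequality \eqref{in3.1} at index $i+1$ says $M_{i-1}+\mu_{i+1}\le S_i$, and $\alpha\unlhd\mu$ gives $S_i\le M_i=M_{i-1}+\mu_i$; subtracting $M_{i-1}$ yields $\mu_{i+1}\le\mu_i$ — again not quite. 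The correct move: from $M_{i-1}+\mu_{i+1}\le S_i$ and $S_{i-1}\le M_{i-1}$ we get $\mu_{i+1}\le S_i-M_{i-1}\le S_i-S_{i-1}=\alpha_i$, as desired. Part (3) is immediate: if $\alpha\unlhd\beta$ then $S_{i-1}\le\beta_1+\dots+\beta_{i-1}$ for every $i$, so the chain $M_{i-2}+\mu_i\le S_{i-1}\le\beta_1+\dots+\beta_{i-1}$ shows $\beta$ satisfies \eqref{in3.1}; one must also check the length bookkeeping, namely that $\beta\in\Lambda(n,r)$ has the same $r$, which holds since $\unlhd$ is defined within $\Lambda(n,r)$.

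For part (4), let $\gamma\in\Lambda^+(n)$ with $\ell(\gamma)\le m$, and write $\gamma=(\gamma_1,\dots,\gamma_n)$ with $\gamma_j=0$ for $j>m$. First, $\mu+\gamma$ is a partition of length $\le m$ (both summands are weakly decreasing with length $\le m$), so $\ell(\mu+\gamma)=m'$ for some $m'\le m$; I should be slightly careful that the defining inequalities for $P(\mu+\gamma)$ only range over $i=2,\dots,m'$, and that these follow from the $i=2,\dots,m$ inequalities for $P(\mu)$ together with monotonicity of $\gamma$. Concretely, for $2\le i\le m$ the condition $\alpha+\gamma\in P(\mu+\gamma)$ at index $i$ is $(M_{i-2}+\gamma_1+\dots+\gamma_{i-2})+(\mu_i+\gamma_i)\le S_{i-1}+(\gamma_1+\dots+\gamma_{i-1})$, which after cancelling $\gamma_1+\dots+\gamma_{i-2}$ from both sides reduces to $M_{i-2}+\mu_i+\gamma_i\le S_{i-1}+\gamma_{i-1}$; since $\alpha\in P(\mu)$ gives $M_{i-2}+\mu_i\le S_{i-1}$ and $\gamma_i\le\gamma_{i-1}$ by $\gamma$ being a partition, adding these yields the claim.

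The only genuinely delicate point — the "main obstacle," though it is minor — is the index bookkeeping around the length of $\mu+\gamma$ versus that of $\mu$ in part (4): one must confirm that restricting the $P(\mu)$-inequalities (indexed $2,\dots,m$) down to the range $2,\dots,\ell(\mu+\gamma)$ loses nothing, and that the appended zeros in $\gamma$ do not spoil the cancellation $\gamma_1+\dots+\gamma_{i-2}$ on both sides. Since $\gamma_j\ge 0$ always and $\gamma_i\le\gamma_{i-1}$ for all $i$ (including where both are $0$), the argument above goes through uniformly for every $i$ in range, so no case analysis on whether $i$ exceeds $\ell(\gamma)$ is actually needed. I would present parts (1)–(3) in one or two lines each and spend the bulk of the write-up on the explicit cancellation in part (4).
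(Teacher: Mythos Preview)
Your proposal is correct and follows essentially the same approach as the paper: each part is proved by direct manipulation of the defining inequalities \eqref{in3.1}, using the index shift $i\mapsto i+1$ for parts (1) and (2), and the monotonicity $\gamma_i\le\gamma_{i-1}$ for part (4). One small simplification: since $\mu_m>0$ and $\gamma_m\ge 0$, in fact $\ell(\mu+\gamma)=m$ exactly, so your length bookkeeping concern in part (4) does not actually arise.
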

	\begin{proof}
		(1) Let $ i \in \{2, \dots, m-1\}$. Using inequality (\ref{in3.1}) for $i+1$ in place of $i$ we have \begin{equation}\label{in3.2}\mu_1 +\mu_2 + \dots +\widehat{\mu}_{i}+\mu_{i+i} \le \al_1+\al_2+\dots+\al_{i}\end{equation} and therefore
		\[\mu_1 +\mu_2 + \dots +\mu_{i-1} \le \al_1+\al_2+\dots+\al_{i}.\] Hence  
	$\alpha_{i} \ge (\mu_1+\dots+\mu_{i-1})-(\alpha_1 +\dots+\alpha_{i-1})$.
	
	(2)	We have $\alpha_1 \ge \mu_2$ from inequality (\ref{in3.1}). Let $ i \in \{2, \dots, m-1\}$. Using inequality (\ref{in3.2}) and the assumption $\al \unlhd \mu$ we have \begin{align*}
		\alpha_i &\ge \mu_1 +\mu_2 + \dots +\widehat{\mu}_{i}+\mu_{i+i} -(\alpha_1 +\dots+\alpha_{i-1})\\ &\ge \mu_1 +\mu_2 + \dots +\widehat{\mu}_{i}+\mu_{i+i} -(\mu_1 +\dots+\mu_{i-1})\\&=\mu_{i+1}.
	\end{align*}
	
	(3) Let $ i \in \{2, \dots, m\}$. From inequality (\ref{in3.1}) and the hypothesis $\al \unlhd \beta$ we have \[ \mu_1 +\mu_2 + \dots +\widehat{\mu}_{i-1}+\mu_{i} \le \al_1+\al_2+\dots+\al_{i-1} \le \beta_1+\beta_2+\dots+\beta_{i-1}.\] Hence $\beta \in P(\mu)$.
	
	(4) Let $ i \in \{2, \dots, m\}$. Since $\gamma$ is a partition we have $\gamma_i \le \gamma_{i-1}$ and thus \[\gamma_1+\cdots +\gamma_{i-2} + \gamma_i \le \gamma_1+ \cdots +\gamma_{i-2} +\gamma_{i-1}. \] From this and inequality (\ref{in3.1}) we obtain
	 \begin{align*}
		&(\mu_1+\gamma_1)+\cdots +(\mu_{i-2}+\gamma_{i-2})+ (\widehat{\mu}_{i-1}+\widehat{\gamma}_{i-1})+(\mu_{i} + \gamma_i) \\ \le& (\alpha_1 + \gamma_1) +\dots+ (\alpha_{i-2}+\gamma_{i-2}) + (\al_{i-1}+\gamma_{i-1}).
	\end{align*}
	Hence $\alpha + \gamma \in P(\mu +\gamma)$.\end{proof}
If $T$ is a semistandard tableau of shape $\mu$ and weight $\alpha$, then since the entries of $T$ are strictly increasing in each column from top to bottom, it is clear that each entry $i$ of $T$ appears only in then first $i$ rows of $T$. If in addition $\alpha \in P(\mu)$, then we have a result in the converse direction according to part (1) of the next lemma; if $T$ is row semistandard and each element $i$ of $T$ occurs only in the first $i$ rows of $T$, then $T$ is semistandard.  As a consequence we obtain the isomorphism of weight spaces of part (2) of the next lemma which is crucial for our purposes.
	\begin{lemma}\label{lem12}Let $\mu\in \La^+(n,r)$ and $m=\ell(\mu) \ge 2$. Let $\al \in P(\mu)$ and $\gamma \in \La^{+}(n)$ such that $\ell(\gamma) \le m$.
	\begin{enumerate}	
	\item  Suppose $S \in \mathrm{Tab}_{\al+\gamma}(\mu+\gamma)$ has the property that for each $i=1,\dots,m-1$, all entries of $S$ equal to $i$ are contained in the first $i$ rows of $S$. Then for each $i=1,\dots,m-1$, the $i$th row of $S$ contains the element $i$ at least $\gamma_i+\mu_{i+1}$ times.
		
			If, in addition, $S$ is row semistandard, then $S$ is semistandard.
			
			\item Suppose $\ell(\gamma) < m$. Then the map $\mathrm{SST}_{\al}(\mu) \to \mathrm{SST}_{\al+\gamma}(\mu+\gamma), \ T \mapsto T^{\vee}$, is a bijection, where $T^{\vee}$ is obtained from $T$ by inserting $\gamma_i$ copies of $i$ at the beginning of row $i$, for each $i=1,2,\dots,m-1$. Hence we have an isomorphism of vector spaces \[\psi_{\al}: \Hom_G(D(\al),\Delta(\mu)) \to \Hom_{G}(D(\al+\gamma),\Delta(\mu+\gamma))\] such that $\psi_{\al}(\phi_T)=\phi_{T^{\vee}},$ where $T \in \mathrm{SST}_{\al}(\mu).$ \end{enumerate}	\end{lemma}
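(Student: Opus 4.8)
The plan is to prove the two purely combinatorial assertions of part (1) by a cell-counting argument, then deduce the bijection of part (2) directly from them, and finally transport the monomial bases of the two Hom-spaces along that bijection.

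\emph{Part (1).} Fix $i\in\{1,\dots,m-1\}$. Since $S\in\mathrm{Tab}_{\al+\gamma}(\mu+\gamma)$, the number of entries of $S$ that are $\le i-1$ equals $\sum_{j=1}^{i-1}(\al_j+\gamma_j)$, and by hypothesis all of them lie in the first $i-1$ rows, which contain $\sum_{j=1}^{i-1}(\mu_j+\gamma_j)$ cells in all. Hence at most $(\mu_1+\dots+\mu_{i-1})-(\al_1+\dots+\al_{i-1})$ cells of the first $i-1$ rows can carry the entry $i$; since all $\al_i+\gamma_i$ copies of $i$ lie in the first $i$ rows, the $i$th row contains $i$ at least $(\al_i+\gamma_i)-\big[(\mu_1+\dots+\mu_{i-1})-(\al_1+\dots+\al_{i-1})\big]=\gamma_i+\big[(\al_1+\dots+\al_i)-(\mu_1+\dots+\mu_{i-1})\big]$ times. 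Applying inequality (\ref{in3.1}) defining $P(\mu)$ with $i+1$ in place of $i$, which reads $\mu_{i+1}\le(\al_1+\dots+\al_i)-(\mu_1+\dots+\mu_{i-1})$, this lower bound is $\ge\gamma_i+\mu_{i+1}$, proving the first assertion. For the second, note that the hypothesis forces every entry of row $r$ of $S$ to be $\ge r$ for each $r\le m$: an entry $j<r$ would satisfy $j\le m-1$, hence be confined to the first $j<r$ rows. If $S$ is moreover row semistandard, the first assertion shows that the first $\gamma_i+\mu_{i+1}$ cells of row $i$ all carry $i$. Now if a column $c$ meets row $i+1$ then $c\le(\mu+\gamma)_{i+1}=\mu_{i+1}+\gamma_{i+1}\le\mu_{i+1}+\gamma_i$, because $\gamma$ is a partition; hence the $(i,c)$-entry of $S$ is $i$ while the $(i+1,c)$-entry is $\ge i+1$. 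Thus every column of $S$ is strictly increasing and $S$ is semistandard.

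\emph{Part (2).} For $T\in\mathrm{SST}_{\al}(\mu)$ the tableau $T^{\vee}$ has shape $\mu+\gamma$ and weight $\al+\gamma$ (here $\ell(\gamma)<m$ guarantees $\gamma_i=0$ for $i\ge m$, so nothing need be inserted in rows $\ge m$), it is row semistandard because row $i$ of $T$ already begins with entries $\ge i$, and every entry of $T^{\vee}$ equal to $i$ lies in the first $i$ rows since the only new copies of $i$ are inserted into row $i$; hence part (1) applies to $S=T^{\vee}$ and shows $T^{\vee}$ is semistandard, so $T\mapsto T^{\vee}$ maps into $\mathrm{SST}_{\al+\gamma}(\mu+\gamma)$. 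It is injective because deleting the first $\gamma_i$ cells of row $i$ (for $i=1,\dots,m-1$) is a left inverse. For surjectivity, let $S\in\mathrm{SST}_{\al+\gamma}(\mu+\gamma)$; then $S$ satisfies the hypothesis of part (1), since semistandardness forces entries of row $r$ to be $\ge r$, so by part (1) row $i$ of $S$ begins with at least $\gamma_i+\mu_{i+1}\ge\gamma_i$ copies of $i$. Deleting the first $\gamma_i$ of these from row $i$ ($i=1,\dots,m-1$) yields a tableau $T$ of shape $\mu$ and weight $\al$, clearly row semistandard, and column-strict by the same computation as in part (1): for a column $c$ meeting rows $i$ and $i+1$ of $T$, the entry placed in position $(i,c)$ came from a column $\le\mu_{i+1}+\gamma_i$ of row $i$ of $S$, hence equals $i$ by part (1), while the $(i+1,c)$-entry is $\ge i+1$. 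Thus $T\in\mathrm{SST}_{\al}(\mu)$ and $T^{\vee}=S$. Finally, by the Proposition of \cite{AB} recalled above, $\{\phi_T:T\in\mathrm{SST}_{\al}(\mu)\}$ and $\{\phi_S:S\in\mathrm{SST}_{\al+\gamma}(\mu+\gamma)\}$ are bases of $\Hom_G(D(\al),\Delta(\mu))$ and $\Hom_G(D(\al+\gamma),\Delta(\mu+\gamma))$, so the bijection $T\mapsto T^{\vee}$ induces the linear isomorphism $\psi_{\al}$ with $\psi_{\al}(\phi_T)=\phi_{T^{\vee}}$.

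The step I expect to demand the most care is the preservation of column-strictness — both in showing $T^{\vee}$ is semistandard and in the surjectivity argument — since consecutive rows get shifted by the unequal amounts $\gamma_i\ge\gamma_{i+1}$, so strictness of the columns of the shorter (resp.\ longer) tableau does not transfer formally. The quantitative bound of the first assertion of part (1) is precisely what rescues the argument: row $i$ carries the minimal possible value $i$ in all of its first $\gamma_i+\mu_{i+1}$ cells, and any column meeting row $i+1$ lies among these cells (using $\gamma_{i+1}\le\gamma_i$), so the shift never disturbs column-strictness.
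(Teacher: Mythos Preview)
Your proof is correct and follows essentially the same approach as the paper's. The only cosmetic difference is that in the first assertion of part (1) you count directly (bounding the number of $i$'s in rows $1,\dots,i-1$ by the number of cells there not occupied by entries $\le i-1$), whereas the paper phrases the same inequality as a contradiction; and for $T^{\vee}$ semistandard you invoke part (1), whereas the paper simply asserts it is clear (which it is, using $\gamma_i\ge\gamma_{i+1}$ and semistandardness of $T$).
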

		\begin{proof}(1) Let $i \in \{1,2,\dots,m-1\}$. The first conclusion of (1) is clear if $i=1$, since from the assumption we have that all elements of $S$ that are equal to $1$ are located in the first row and the number of these is equal to $\al_1+\gamma_1 $ and we have  $\al_1 \ge \mu_2$. So let $i>1$. Suppose that the number $k$ of elements in row $i$ of  $S$ that are equal to $i$ satisfies $k<\gamma_i +\mu_{i+1}$. From the assumption we have that in rows $1,2,\dots,i-1$ of $S$, the number of elements equal to $i$ is $\al_i+\gamma_i-k$. Hence the total number of elements in rows $1,2,\dots, i-1$ of $S$ is greater than or equal to \begin{align*}
				&(\al_1 +\gamma_1) + \cdots + (\al_{i-1}+\gamma_{i-1})+(\al_i+\gamma_i-k) \\&=(\al_1+\cdots+\al_{i-1}+\al_i) +(\gamma_1+\cdots+\gamma_{i-1})+\gamma_i-k \\ &>(\al_1+\cdots+\al_{i-1}+\al_i) +(\gamma_1+\cdots+\gamma_{i-1})-\mu_{i+1} \\ &\ge (\mu_1+\cdots+\mu_{i-1}+\mu_{i+1})+(\gamma_1+\cdots+\gamma_{i-1})-\mu_{i+1}\\&=(\mu_1+\gamma_1)+\cdots+(\mu_{i-1}+\gamma_{i-1}).\end{align*} This is not possible because of the strict inequality.
			
			Suppose in addition that $S$ is row semistandard. From the first statement of (1) we have that the $i$th row of $S$ contains at least $\gamma_i + \mu_{i+1}$ copies of $i$, for each $i=1,\dots,m-1$. Since $\gamma_i + \mu_{i+1} \ge \gamma_{i+1}+\mu_{i+1}$, which is the length of the $i+1$ row of $S$, and since every element of the $i+1$ row of $S$ is greater than or equal to $i+1$, we conclude that there is no column violation involving the rows $i$ and $i+1$. This holds for all $i=1,2,\dots,m-1$ and thus $S$ is semistandard.
			
			(2) Let $T \in \mathrm{SST}_{\al}(\mu)$. Since $\gamma_1 \ge \cdots \ge \gamma_{m-1} \ge 0$, it is clear that $T^{\vee}$ is semistandard. Hence we obtain the map $\mathrm{SST}_{\al}(\mu) \to \mathrm{SST}_{\al+\gamma}(\mu+\gamma), \ T \mapsto T^{\vee}$ which is injective.
			
			Let  $S \in \mathrm{SST}_{\al+\gamma}(\mu+\gamma)$. Since $S$ is semistandard, each element $i$ does not occur in rows $i+1,\dots,m$, for each $i=1,2,\dots,m-1$. By part (1) of the lemma, we may consider the tableau $T \in \mathrm{Tab}_\al(\mu)$ obtained from $S$ by deleting from the $i$th row, $\gamma_i$ appearances of the element $i$, for each $i=1,2,\dots,m-1$. Again by part (1) of the lemma, the $i$th row of $T$ contains the element $i$ at least $\mu_{i+1}$ times for each $i=1,2,\dots,m-1$ and moreover all the elements of the $m$-th row of $T$ are greater than $m-1$. Hence $T$ is standard. It is clear that $T^{\vee}=S$.
		\end{proof}
		
	Recall from Section 2.4 that for $A=(a_{ij}) \in M_{n}(\mathbb{N})$, we have the sequences $A^{(1)}\in \La(n)$ and $A^{(2)}\in \La(n)$ of column sums and row sums of $A$. Let us denote by $T_{n}(\mathbb{N})$ the set of $n \times n$ upper triangular matrices with entries in $\mathbb{N}$. For $\al, \beta  \in \La(n,r)$, define the set \[T_{n}(\mathbb{N})(\al, \beta) = \{A \in T_{n}(\mathbb{N}): A^{(1)}=\al, A^{(2)}=\beta\}.\]
	\begin{corollary}\label{triang}
		Let $\al \in P(\mu)$. Then a basis of the vector space $\Hom_G(D(\al),\Delta(\mu))$ is the set $\{\phi_{T_A}: A \in T_{n}(\mathbb{N})(\al, \mu)\}$.
	\end{corollary}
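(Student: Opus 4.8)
The plan is to combine the proposition of Akin--Buchsbaum (the one just before Subsection \ref{s2.4}) with the semistandardness criterion of Lemma \ref{lem12}(1). By that proposition, $\{\phi_T : T \in \mathrm{SST}_{\al}(\mu)\}$ is already a basis of $\Hom_G(D(\al),\Delta(\mu))$, so it suffices to exhibit a bijection between $\mathrm{SST}_{\al}(\mu)$ and $T_{n}(\mathbb{N})(\al,\mu)$ that matches $T$ with a matrix $A$ for which $T_A = T$, i.e.\ for which $A_T = A$. Via the bijection \eqref{bijection1}, $T \mapsto A_T$, semistandard tableaux of shape $\mu$ and weight $\al$ correspond to matrices $A \in M_n(\mathbb{N})$ with $A^{(1)}=\al$, $A^{(2)}=\mu$; the content of the corollary is that when $\al \in P(\mu)$, these matrices are exactly the \emph{upper triangular} ones.

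First I would treat the trivial direction: if $T \in \mathrm{SST}_{\al}(\mu)$ then, since the entries of $T$ strictly increase down each column, the entry $j$ can only occur in rows $1,\dots,j$, so $a_{ij}=0$ whenever $i>j$; that is, $A_T \in T_n(\mathbb{N})(\al,\mu)$. For the converse, suppose $A=(a_{ij}) \in T_n(\mathbb{N})(\al,\mu)$ and let $T = T_A \in \mathrm{RSST}_{\al}(\mu)$ be the corresponding row semistandard tableau. Upper triangularity of $A$ says precisely that for each $i=1,\dots,n$, the entry $i$ of $T$ appears only in rows $1,\dots,i$; in particular for each $i=1,\dots,m-1$ all entries of $T$ equal to $i$ lie in the first $i$ rows (and all entries of row $m$ are $\geq m$, while rows beyond $m$ are empty). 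Applying Lemma \ref{lem12}(1) with $\gamma = 0$ — note $\ell(0) \le m$ — we conclude that $T$ is semistandard, so $T \in \mathrm{SST}_{\al}(\mu)$. Thus $T \mapsto A_T$ restricts to a bijection $\mathrm{SST}_{\al}(\mu) \to T_n(\mathbb{N})(\al,\mu)$ with inverse $A \mapsto T_A$, and since $\phi_{T_A}$ for $A$ ranging over $T_n(\mathbb{N})(\al,\mu)$ is just a relabelling of $\phi_T$ for $T$ ranging over $\mathrm{SST}_{\al}(\mu)$, the set $\{\phi_{T_A} : A \in T_n(\mathbb{N})(\al,\mu)\}$ is a basis.

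There is essentially no obstacle here: the corollary is a direct packaging of Lemma \ref{lem12}(1) in the $\gamma=0$ case together with the Akin--Buchsbaum basis. The one point that needs a word of care is the case $m = \ell(\mu) = 1$, where Lemma \ref{lem12} is not stated; but then $\mu = (r,0,\dots,0)$, every tableau of shape $\mu$ of any weight is automatically semistandard, $P(\mu) = \La(n,r)$, and a weight-$\al$ tableau of shape $(r)$ has matrix supported on the first row, hence upper triangular, so $T_n(\mathbb{N})(\al,\mu) = \{A_T : T \in \mathrm{SST}_{\al}(\mu)\}$ trivially and the statement holds in this degenerate case as well.
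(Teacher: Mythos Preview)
Your proof is correct and follows essentially the same route as the paper: restrict the bijection \eqref{bijection1} to $\mathrm{SST}_{\al}(\mu)$, note that semistandardness forces upper triangularity, and for the converse invoke Lemma \ref{lem12}(1) with $\gamma=0$ to promote a row semistandard tableau with upper triangular matrix to a semistandard one. Your explicit treatment of the $m=1$ case is a nice touch, since Lemma \ref{lem12} is only stated for $m\ge 2$ and the paper's proof tacitly assumes this.
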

\begin{proof} We have the bijection (\ref{bijection1}), \[\mathrm{RSST}_{\al}(\mu) \to \{A \in M_{n}(\mathbb{N}): A^{(1)}=\al, A^{(2)}=\mu\}, \ T \mapsto A_T.\] If $T$ semistandard, then an element $i \in \{1,2,\dots,n\}$ cannot occur in a row of $T$ located below the $i$th row and hence $ A_T$ is upper triangular. By restricting the previous map we have the injective map \[\mathrm{SST}_{\al}(\mu) \to \{A \in T_{n}(\mathbb{N}): A^{(1)}=\al, A^{(2)}=\mu\}, \ T \mapsto A_T.\]
	This map is onto because if $B \in T_{n}(\mathbb{N})$ is such that $B^{(1)}=\al$ and $ B^{(2)}=\mu$, then the corresponding tableau $T_B$ has the property that an element $i \in \{1,2,\dots,n\}$ cannot occur in a row of $T_B$ located below the $i$th row, because $B$ is upper triangular. Since $\al \in P(\mu)$, we may apply  Lemma \ref{lem12}(1) for $\gamma =0$ to conclude that $T_B$ is semistandard. By the above bijection, the matrix of $T_B$ is $B$.
	\end{proof}

	\section{Homomorphisms and adding powers of $p$}
	\subsection{First main result and corollaries} Theorem \ref{thm13} that follows concerns pairs of partitions $\lambda, \mu \in \La^+(n,r)$ that satisfy (a) $\lambda \in P(\mu)$ and (b) an additional condition on $\mu$ that is given in the next definition.
	\begin{definition}\label{def}
		let $g$ be an integer such that $1 \le g \le n$. If $g=1$, let $\La^{+}(n)_1=\La^{+}(n)$ and $\La^{+}(n,r)_1=\La^{+}(n,r)$. If $g \ge 2$, let \[ \La^{+}(n)_g=\{\mu \in \La^{+}(n): \mu_{j-1} \le \mu_j +\mu_{j+1}, \ j=2,\dots,g\} \] and \[\La^{+}(n,r)_g= \La^{+}(n,r) \cap \La^{+}(n)_g.\]
	\end{definition}
	
	In the previous definition, it is understood that $\mu_{n+1}=0$.
\begin{example*}Let $n \ge4$. Then for $g=2$, we have that $\La^{+}(n)_2$ consists of the partitions $\mu=(\mu_1, \dots, \mu_n) \in \La^{+}(n)$ that satisfy \[\m_1 \le \mu_2+\mu_3. \]
For $g=3$ we have that $\La^{+}(n)_3$ consists of the partitions $\mu=(\mu_1, \dots, \mu_n) \in \La^{+}(n)$ that satisfy \begin{align*}
	&\mu_1 \le \mu_2+\mu_3,\\
	&\mu_2 \le \mu_3+\mu_4.
\end{align*}
In particular, $(7,6,3,2) \in \La^{+}(n)_2$ and $(7,6,3,2) \notin \La^{+}(n)_3$.
\end{example*}
From Definition \ref{def} it follows that we have the descending sequence \[\Lambda^+(n) = \Lambda^+(n)_1 \supseteq \Lambda^+(n)_2 \supseteq \cdots \supseteq  \Lambda^+(n)_n . \]
\begin{remark}\label{add}	We have  $\mu + \gamma \in \Lambda^+(n)_g$ if $\mu, \gamma \in \Lambda^+(n)_g.$ Indeed, if $\mu_{j-1} \le \mu_j +\mu_{j+1}$ and $\gamma_{j-1} \le \gamma_j +\gamma_{j+1}$ for $j=2, \dots, g$, then $\mu_{j-1} +\gamma_{j-1} \le (\mu_j +\gamma_j) + (\mu_{j+1} +\gamma_{j+1}).$\end{remark}
	We will need the following notation. \begin{definition}\label{defce} Let $\la, \mu \in \La^{+}(n,r)$.  \begin{enumerate}\item Define \[c_s= \sum_{i=1}^{s}(\mu_i - \lambda_i), \ s=1,\dots,n.\] \item Let $g \in \{2,\dots,n-1\}$. Define $e_s=e_s(\lambda, \mu, g) \in \mathbb{N}$ by
	\begin{equation}\label{e}
		e_s=\begin{cases} c_1, s=1,\\
			\max\{c_{s-1},c_s\}, \ 1<s<g,\\
			\min\{\lambda_{g+1}, c_g\}, \ s=g.
	\end{cases}\end{equation}
If $g=1$, define $e_1 =\min\{\lambda_{2}, c_1\}$. \end{enumerate}\end{definition}
	Our first main result is the following.
	\begin{theorem}\label{thm13}Let $\la, \mu \in \La^{+}(n,r)$ and  $\gamma \in \La^{+}(n)$. Suppose $\la \in P(\mu)$, $\mu \in \La^{+}(n)_g$ and $g<m$, where $g=\ell(\gamma), m=\ell(\mu)$. If $p^{\l_p(e_s)}$ divides $\gamma_s$ for all $s=1,\dots,g$, then \[\Hom_G(\Delta(\la),\Delta(\mu)) \simeq \Hom_{G}(\Delta(\la+\gamma), \Delta(\mu+\gamma)).\]
			\end{theorem}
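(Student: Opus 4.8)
The strategy is to compare the two Hom spaces by exhibiting both as kernels of explicitly understood maps, using the presentation (\ref{pres1}) of the Weyl modules $\Delta(\la)$ and $\Delta(\la+\gamma)$ together with the weight-space description $\Hom_G(D(\al),\Delta(\mu)) \simeq \Delta(\mu)_\al$ from the Proposition of Section \ref{s2.3}. Applying $\Hom_G(-,\Delta(\mu))$ to (\ref{pres1}) gives
\[
0 \to \Hom_G(\Delta(\la),\Delta(\mu)) \to \Hom_G(D(\la),\Delta(\mu)) \xrightarrow{\ \Theta_\la\ } \prod_{s,t}\Hom_G(D(\la(s,t)),\Delta(\mu)),
\]
where $\Theta_\la$ is the product of the maps $\Hom_G(\square_{\la,s,t},\Delta(\mu))$ of Corollary \ref{corhom}, and similarly for $\la+\gamma,\mu+\gamma$. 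So $\Hom_G(\Delta(\la),\Delta(\mu)) = \ker\Theta_\la$. Using the basis $\{\phi_{T_A}: A\in T_n(\mathbb{N})(\la,\mu)\}$ of $\Hom_G(D(\la),\Delta(\mu))$ from Corollary \ref{triang} (here $\la\in P(\mu)$ is essential), everything becomes linear algebra over $K$ with explicit structure constants coming from Lemma \ref{comp2} / Corollary \ref{corhom}.

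The core step is to show that the map $\psi_\al$ of Lemma \ref{lem12}(2), which identifies $\Hom_G(D(\al),\Delta(\mu))$ with $\Hom_G(D(\al+\gamma),\Delta(\mu+\gamma))$ for every $\al\in P(\mu)$ via $\phi_T\mapsto\phi_{T^\vee}$, is compatible with the presentation maps, i.e. that (after identifying the relevant domains $D(\la(s,t))$ with weight spaces in the target of $\square$) the square
\[
\begin{tikzcd}
\Hom_G(D(\la),\Delta(\mu)) \arrow[r,"\Theta_\la"] \arrow[d,"\psi_\la"'] & \prod_{s,t}\Hom_G(D(\la(s,t)),\Delta(\mu)) \arrow[d] \\
\Hom_G(D(\la+\gamma),\Delta(\mu+\gamma)) \arrow[r,"\Theta_{\la+\gamma}"] & \prod_{s,t}\Hom_G(D((\la+\gamma)(s,t)),\Delta(\mu+\gamma))
\end{tikzcd}
\]
commutes on the nose when $g\ge m$-obstruction is avoided, and \emph{commutes up to the kernel} in general. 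Concretely: fix $A=(a_{ij})\in T_n(\mathbb{N})(\la,\mu)$ and $s,t$. By Corollary \ref{corhom}, $\Theta_\la(\phi_{T_A})$ in component $(s,t)$ is $\sum \binom{a_{1s}+t_1}{t_1}\cdots\binom{a_{ns}+t_n}{t_n}\,\phi_{T_A(s,t;t_1,\dots,t_n)}$. On the $\la+\gamma$ side the matrix is $A+\mathrm{diag}(\gamma)$ (restricted appropriately), so the $s$-th column diagonal entry becomes $a_{ss}+\gamma_s$ for $s\le g$, and for $s\le g<m$ one uses Remark \ref{remnewcond} to rewrite the sum so that only $t_1,\dots,t_s$ range freely. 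The point is then a binomial comparison: we must match the structure constants $\binom{a_{ss}+t_s}{t_s}$ against $\binom{a_{ss}+\gamma_s+t_s}{t_s}$ modulo $p$. This is exactly Lemma \ref{bin1}(1): since $p^{l_p(e_s)}\mid\gamma_s$ and $e_s$ is chosen (Definition \ref{defce}) to dominate the relevant range of $t_s$ — the diagonal $t_s$ is bounded by quantities controlled by $c_{s-1},c_s$ via Remark \ref{obv} applied to the semistandard tableaux involved, hence by $e_s$ — we get $\binom{a_{ss}+\gamma_s+t_s}{t_s}\equiv\binom{a_{ss}+t_s}{t_s}\pmod p$ for all $t_s$ that can occur with a nonzero term. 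One also needs the off-diagonal binomials $\binom{a_{is}+t_i}{t_i}$ ($i\ne s$) and the tableaux $T_A(s,t;\dots)$ themselves to correspond under $T\mapsto T^\vee$; this is where $\mu\in\La^+(n)_g$ enters, guaranteeing that the inserted copies of $i$ in rows $1,\dots,g$ survive all the straightening and do not interact with columns $s,s+1$ for $s\le g$ (via Lemma \ref{lem12}(1) and Remark \ref{add}, since $\mu+\gamma\in\La^+(n)_g$ too). For $s\ge g$ (equivalently $s> \ell(\gamma)$, using $g<m$) the column $s$ is unchanged by adding $\gamma$ and the comparison is trivial.

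Granting the commuting square, $\psi_\la$ restricts to an isomorphism $\ker\Theta_\la\xrightarrow{\ \sim\ }\ker\Theta_{\la+\gamma}$, which is the claimed isomorphism of Hom spaces. \textbf{The main obstacle} I anticipate is bookkeeping the straightening: after applying $\square_{\la,s,t}$ the tableaux $T_A(s,t;\dots)$ are only row semistandard, and one must pass to the semistandard basis (Theorem \ref{standard}, Lemma \ref{insertrows}) before comparing the two sides; one has to check that the \emph{entire} straightening process on the $\mu+\gamma$ side is obtained from that on the $\mu$ side by the insertion $T\mapsto T^\vee$ — i.e. the extra $\gamma_i$ copies of $i$ in rows $i\le g$ are inert throughout. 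The conditions $\la\in P(\mu)$, $\mu\in\La^+(n)_g$ and $g<m$ are precisely what make this inertness work (they force all relevant column violations to involve only rows $>g$, or entries $>g$, where nothing was inserted), and Lemma \ref{lem12}(1) is the technical engine. The congruence count — verifying that $t_s$ in every surviving summand lies in a range of length $\le e_s$ so that Lemma \ref{bin1}(1) applies — is the other place requiring care, and is handled by combining the inequalities (\ref{newcond}) of Remark \ref{remnewcond} with Remark \ref{obv} and Lemma \ref{rem11}.
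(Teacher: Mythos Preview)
Your approach is the paper's: realize both Hom spaces as kernels via the presentation (\ref{pres1}), use the bijection $T\mapsto T^\vee$ of Lemma \ref{lem12}(2) on the bases of Corollary \ref{triang}, and verify that the relation maps $\Hom_G(\square_{\la,s,t},-)$ are intertwined by $\psi$. The case split $s>g$ versus $s\le g$ is also the paper's, and your identification of Lemma \ref{bin1}(1) as the key congruence tool is right.

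Where you should sharpen is the $s\le g$ case. After applying $\square_{\la,s,t}$ the tableau acquires an $s^{(t_{s+1})}$ in row $s+1$, and straightening this via Lemma \ref{lem15} produces a \emph{second} family of binomials $\binom{a_{s+1,s+1}-u_{s+1}+l_{s+1}}{l_{s+1}}$ on the $\mu$ side, which on the $\mu+\gamma$ side becomes $\binom{a_{s+1,s+1}+\gamma_{s+1}-u_{s+1}+l_{s+1}}{l_{s+1}}$. So there are \emph{two} congruences to verify, not one: the diagonal one you name, and this straightening one involving $\gamma_{s+1}$. The bound on $l_{s+1}$ is $l_{s+1}\le u\le c_s$, and this is why $e_{s+1}=\max\{c_s,c_{s+1}\}$ rather than just $c_{s+1}$: the divisibility of $\gamma_{s+1}$ must absorb $c_s$ coming from row $s$.

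Relatedly, your account of the role of $\mu\in\La^+(n)_g$ is not quite right. It is not that the inserted copies are ``inert'' under straightening --- they are not; $\gamma_{s+1}$ genuinely appears in the straightening binomial above. Rather, $\mu\in\La^+(n)_g$ is used to guarantee $a_{s+1,s+1}-u_{s+1}\ge 0$ on the $\mu+\gamma$ side (where a priori one only knows $a_{s+1,s+1}+\gamma_{s+1}-u_{s+1}\ge 0$): if this failed, the inequality $a_{ss}+u_s+u_{s+1}\le\mu_s$ together with $a_{ss}\ge\mu_{s+1}$ and $a_{s+1,s+1}\ge\mu_{s+2}$ from Lemma \ref{lem12}(1) would force $\mu_s>\mu_{s+1}+\mu_{s+2}$, contradicting $\mu\in\La^+(n)_g$. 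Without this nonnegativity, Lemma \ref{bin1}(1) does not apply to the second binomial and the comparison breaks down. So the mechanism is a congruence argument enabled by a nonnegativity bound, not an avoidance/inertness argument.
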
  
	We noted in the Introduction that the above theorem provides an answer to a problem  of D. Hemmer \cite[Problem 5.4]{Hem2}. 
	
	As corollaries we obtain a stability result and a periodicity result. As in the Introduction, if $k$ is a nonnegative integer and $\nu=(\nu_1,\dots,\nu_n)$ a partition, by $k\nu$ we denote the partition $(k\nu_1,\dots,k\nu_n).$ Let \begin{equation}\label{defdk}d_k=\dim \Hom _{G}(\Delta (\lambda + k \nu), \Delta(\mu + k \nu)).\end{equation}
	
\begin{corollary}\label{stab} Let $\la, \mu \in \La^{+}(n,r)$ and $\nu \in \La^{+}(n)$. Suppose $\la \in P(\mu)$ and  $\mu \in \La^{+}(n)_g$ and $g<m$, where $g=\ell(\nu), m=\ell(\mu)$. Then, the sequence $d_{p}, d_{p^2}, \dots$ 
eventually stabilizes.
\end{corollary}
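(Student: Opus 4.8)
The plan is to deduce this directly from Theorem \ref{thm13}, applied with $\gamma$ equal to a sufficiently divisible multiple of $\nu$. First I would dispose of the degenerate case $\nu=0$, in which $d_k=d_0$ for every $k$ and there is nothing to prove. So assume $g=\ell(\nu)\ge 1$; since $g<m\le n$ we have $g\in\{1,\dots,n-1\}$, so the integers $e_s=e_s(\lambda,\mu,g)\in\mathbb{N}$ of Definition \ref{defce} are defined for $s=1,\dots,g$. I would then set $N=\max\{l_p(e_s):1\le s\le g\}$, using the convention $l_p(0)=0$ (consistent with $p^{0}=1>0$), so that a vanishing $e_s$ imposes no divisibility constraint.

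Next, fix $j\ge N$ and take $\gamma=p^{j}\nu$. Then $\gamma\in\Lambda^+(n)$ and, since $p^{j}\ge 1$, $\ell(\gamma)=\ell(\nu)=g$; in particular $\ell(\gamma)=g<m$. For each $s=1,\dots,g$ we have $l_p(e_s)\le N\le j$, hence $p^{l_p(e_s)}$ divides $p^{j}$, and therefore divides $p^{j}\nu_s=\gamma_s$. The remaining hypotheses of Theorem \ref{thm13} are exactly the standing assumptions $\lambda\in P(\mu)$, $\mu\in\Lambda^+(n)_g$ and $g<m$. Hence Theorem \ref{thm13} applies and gives
\[\Hom_G(\Delta(\lambda),\Delta(\mu))\simeq\Hom_G(\Delta(\lambda+p^{j}\nu),\Delta(\mu+p^{j}\nu)),\]
that is, $d_{p^{j}}=d_0$, for every $j\ge N$.

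Consequently $d_{p^{N}}=d_{p^{N+1}}=\cdots=d_0$, so the sequence $d_p,d_{p^2},\dots$ is eventually constant and a fortiori stabilizes. I do not expect a real obstacle here beyond this bookkeeping: the only points deserving a line of justification are that passing from $\nu$ to $p^{j}\nu$ leaves the length unchanged, so that the hypothesis $\ell(\gamma)=g<m=\ell(\mu)$ of Theorem \ref{thm13} is preserved, together with fixing the convention for $l_p$ at $0$ so that all the divisibility conditions can be met simultaneously for $j$ large.
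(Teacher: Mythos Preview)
Your proposal is correct and follows essentially the same approach as the paper: apply Theorem~\ref{thm13} with $\gamma=p^{j}\nu$ for $j$ large enough that the divisibility conditions on the $\gamma_s$ are met. The paper's proof is a one-line application of the same idea, stated with the (slightly looser) bound $N\ge\max\{e_1,\dots,e_g\}$ rather than your $N=\max\{l_p(e_s)\}$; your version simply spells out the bookkeeping (length preservation under scaling, the $l_p(0)=0$ convention) that the paper leaves implicit.
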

\begin{proof} Indeed, according to Theorem \ref{thm13}, $\dim \Hom_{G}(\Delta (\lambda), \Delta(\mu)) = d_{p^N}$ for all $N \ge $ $\max\{e_1,\dots,e_g\}.$
\end{proof}		
\begin{corollary}\label{per} With the assumptions of the previous corollary, suppose in addition that $\nu \in \La^{+}(n)_g$. Then, the sequence $d_0, d_1, \dots$ 
	is periodic with period a power of $p$.
\end{corollary}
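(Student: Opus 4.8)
The plan is to combine the stabilization already available from Theorem \ref{thm13} with the periodicity machinery that is implicit in the same theorem. The key point is that under the extra hypothesis $\nu \in \La^{+}(n)_g$, Remark \ref{add} guarantees that $\mu + k\nu$ remains in $\La^{+}(n)_g$ for every $k \ge 0$, so that Theorem \ref{thm13} is applicable not just with base pair $(\la,\mu)$ but with every base pair $(\la + k\nu, \mu + k\nu)$. Likewise, Lemma \ref{rem11}(4) (applied repeatedly, or to the partition $k\nu$ which has length $\le m$) shows $\la + k\nu \in P(\mu + k\nu)$ for all $k$, so hypothesis (a) of Theorem \ref{thm13} is preserved along the whole sequence.

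First I would fix the relevant power of $p$. Write $e_s^{(k)} = e_s(\la + k\nu, \mu + k\nu, g)$; since $c_s(\la+k\nu,\mu+k\nu) = c_s(\la,\mu)$ is independent of $k$ (each part of $\nu$ is added to both $\la$ and $\mu$), and since $\la_{g+1}$ only grows with $k$, one checks that the $e_s^{(k)}$ are bounded, in fact $e_s^{(k)} \le e_s^{(0)} =: e_s$ for $s < g$, with the $s=g$ term also controlled by $\max\{\la_{g+1}, c_g\}$. Hence there is a single exponent, say $N_0 = \max_s l_p(e_s)$ computed from $(\la,\mu)$ alone, such that $p^{N_0}$ divides $e_s^{(k)}$-relevant quantities for all $k$; set $q = p^{N_0}$. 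Then I would apply Theorem \ref{thm13} to the base pair $(\la + k\nu, \mu + k\nu)$ with increment $\gamma = q\nu$ (which lies in $\La^{+}(n)$, has length $g = \ell(\nu) < m$, and has each part $q\nu_s$ divisible by $p^{l_p(e_s^{(k)})}$): this yields
\[
\Hom_G(\Delta(\la + k\nu), \Delta(\mu + k\nu)) \simeq \Hom_G(\Delta(\la + (k+q)\nu), \Delta(\mu + (k+q)\nu)),
\]
i.e. $d_k = d_{k+q}$ for every $k \ge 0$. That is exactly periodicity with period $q = p^{N_0}$, a power of $p$.

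The only subtlety — and the place a little care is needed — is the uniformity of the divisibility exponent in $k$, in particular handling the $s = g$ term $e_g = \min\{(\la+k\nu)_{g+1}, c_g\}$, whose first argument does depend on $k$. Since $l_p$ is monotone in its argument and $\min\{(\la+k\nu)_{g+1}, c_g\} \le c_g$, we get $l_p(e_g^{(k)}) \le l_p(c_g)$ for all $k$, so taking $N_0 \ge l_p(c_g)$ (and $\ge l_p(e_s)$ for $s<g$) suffices; similarly for $g=1$ one uses $\min\{\la_2 + k\nu_2, c_1\} \le c_1$. So the bound is genuinely uniform, and no further obstacle arises. I expect this step — verifying that one fixed power of $p$ works simultaneously as the increment multiplier for every shifted base pair — to be the main (and essentially only) content of the argument; everything else is a direct invocation of Theorem \ref{thm13}, Remark \ref{add}, and Lemma \ref{rem11}(4).
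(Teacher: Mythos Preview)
Your proposal is correct and follows the same strategy as the paper's proof: verify that the hypotheses of Theorem \ref{thm13} persist for every shifted pair $(\lambda+k\nu,\mu+k\nu)$ via Lemma \ref{rem11}(4) and Remark \ref{add}, then apply the theorem with $\gamma = p^{N}\nu$ to get $d_k = d_{k+p^N}$. One simplification you missed: since $g=\ell(\nu)$ we have $\nu_{g+1}=0$, so $(\lambda+k\nu)_{g+1}=\lambda_{g+1}$ is actually \emph{independent} of $k$, and hence $e_s^{(k)}=e_s^{(0)}$ for all $s=1,\dots,g$ exactly (not merely bounded) --- this is how the paper argues, and it removes the ``subtlety'' you flagged.
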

\begin{proof} From Lemma \ref{rem11}(4) and Remark \ref{add}, it follows that $\lambda + k \nu \in P(\mu)$ and $\mu + k \nu \in \Lambda^{+}(n)_g$ for all $k \in \mathbb{N}$. From  (\ref{e}) it is clear that $e_{i}(\lambda, \mu, g) = e_{i}(\lambda + k\nu, \mu + k\nu, g)$ for all $i=1,\dots,g$ and all $k \in \mathbb{N}$, because  $\ell(\nu) < g+1$. Thus we may apply Theorem \ref{thm13} to obtain \[\Hom_{G}(\Delta (\lambda + k \nu), \Delta(\mu + k \nu)) \simeq \Hom_{G}(\Delta (\lambda + (k+p^N) \nu), \Delta(\mu + (k+p^N) \nu)),\] where $N \ge $ $\max\{e_1,\dots,e_g\}.$
\end{proof}		
	\begin{examples}\label{rem14} (1) Let $p$ be a prime such that $p \ge 5$ and define the partitions \begin{align*}\lambda&=(4p, 3p-1, p-1, p-1, p-1, 4), \\ \mu &=(5p-1, 3p+1, 2p). \end{align*} The length of $\mu$ is $m=3$.  We have $\mu_2 \le \lambda_1$ and $\mu_1+\mu_3 \le \lambda_1 + \lambda_2$ and thus $\lambda \in P(\mu)$. Let $g=2$. We have $\mu_1 \le \mu_2+\mu_3$ and thus $\mu \in \La^+(n)_g$. Using eq. (\ref{e}) we see that \[
			e_1 = c_1=p-1, \ e_2 = \min\{\lambda_3, c_2\}=p-1,\  
			\]
	and thus $l_p(e_1)=\l_p(e_2)=1$. Hence Theorem \ref{thm13} states that $\Hom_G(\Delta(\la),\Delta(\mu)) \simeq \Hom_{G}(\Delta(\la+\gamma), \Delta(\mu+\gamma))$ for all partitions $\gamma=(\gamma_1, \gamma_2)$ such that $\gamma_1$ and $\gamma_2$ are divisible by $p$ and $\gamma_2 \neq 0$.
	
	For $g=1$ in the above example, Theorem \ref{thm13} states that $\Hom_G(\Delta(\la),\Delta(\mu)) \simeq \Hom_{G}(\Delta(\la+(\gamma_1)), \Delta(\mu+(\gamma_1))$ for all nonnegative $\gamma_1$ that are divisible by $p$. Hence the final conclusion is that $\Hom_G(\Delta(\la),\Delta(\mu)) \simeq \Hom_{G}(\Delta(\la+\gamma), \Delta(\mu+\gamma))$ for all partitions $\gamma=(\gamma_1, \gamma_2)$ such that $\gamma_1$ and $\gamma_2$ are divisible by $p$.
	
	We note that in this example we have  $\Hom_{G}(\Delta(\la), \Delta(\mu) \neq 0$ according to Theorem \ref{nonv1} that is proved in Section 5 below. See Example \ref{exhom}.
	
	(2)	Let us fix a partition $\mu$ such that $\mu \in \La^+(n,r)_{m-1}$, 	where $m=\ell{(\mu) }$, and $\mu$ has distinct parts. Let $q$ be an integer such that \begin{equation}\label{defq}0 < q \le \min\{\mu_1 - \mu_2, \mu_2 - \mu_3, \dots, \mu_{m-1} - \mu_{m}, \mu_{m}\}.\end{equation}
	Define the partition \begin{equation}\label{deflambda}\lambda =(\m_1-q,\mu_2, \dots, \mu_m, q).\end{equation} Using (\ref{defq}) it follows that $\lambda \in P(\mu)$. From eq. (\ref{e}) we deduce \[e_1 = \dots = e_{m-1}=q.\]
	Let $\nu$ be any partition of length at most $m-1$. Applying Corollary \ref{stab} and its proof, we have that the sequence of nonnegative integers $d_p, d_{p^2}, \dots$ defined in eq. (\ref{defdk}) stabilizes at $\label{eqd}d_{p^{l_p(q)}}$, that is \begin{equation}d_{p^{l_p(q)}} = d_{p^{l_p(q)+1}}= \dots \end{equation}
	and this common value is equal to $\dim \Hom_G(\Delta(\la),\Delta(\mu)).$
	
	We now relate this example to a result of Carter and Payne.
	
	If $p$ be a prime and $\lambda, \mu \in \La^+(n,r)$, we say that $\lambda, \mu$ are a pair of Carter-Payne partitions if there exist $1 \le i<j \le n$ and $q>0$ such that \begin{equation}\label{CPpar}\mu_i=\lambda_i +q, \ \mu_j=\lambda_j -q, \ \mu_u=\lambda_u, \ u \neq i,j \end{equation}
	and \begin{equation}\label{CPdiv} p^{l_p(q)} \ \textrm{divides} \ \lambda_i-\lambda_j +j-i+q.
	\end{equation}
	We recall the following classical result of Carter and Payne.
	\begin{theorem}[{\cite{CP}}]\label{CPthm}
If $\lambda, \mu \in \La^+(n,r)$ are a pair of Carter-Payne partitions, then \[\Hom_G(\Delta(\lambda), \Delta(\mu)) \neq 0.\]
	\end{theorem}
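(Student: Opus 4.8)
The plan is to write down an explicit nonzero element of $\Hom_G(\Delta(\lambda),\Delta(\mu))$ using the presentation (\ref{pres1}). I will carry this out in full for the case of adjacent rows, i.e. $j=i+1$, and then indicate what changes when $j>i+1$.

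By (\ref{pres1}), giving a map $\Delta(\lambda)\to\Delta(\mu)$ is the same as giving $\Phi\in\Hom_G(D(\lambda),\Delta(\mu))$ which annihilates the image of $\square_{\lambda,s,t}$ for every $s\in\{1,\dots,n-1\}$ and $t\in\{1,\dots,\lambda_{s+1}\}$; and by \cite{AB} the space $\Hom_G(D(\lambda),\Delta(\mu))$ has as a basis the set $\{\phi_T : T\in\mathrm{SST}_\lambda(\mu)\}$. Assume $j=i+1$, so that $\lambda$ and $\mu$ coincide outside rows $i,i+1$. A short forcing argument (the first $i-1$ rows of a semistandard tableau of shape $\mu$ and weight $\lambda$ must be $k^{(\mu_k)}$, then rows $i$ and $i+1$ are determined, then the rest) shows that $\mathrm{SST}_\lambda(\mu)=\{T_0\}$, where $T_0$ has $i$th row $i^{(\lambda_i)}(i+1)^{(q)}$, $(i+1)$th row $(i+1)^{(\lambda_{i+1}-q)}$, and $k$th row $k^{(\mu_k)}$ for the remaining $k$. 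I would take $\Phi=\phi_{T_0}$, so that $\Phi(e^\lambda)=[T_0]\neq0$ by Remark \ref{remarkphi} and Theorem \ref{standard}. It remains to decide for which $p$, $\lambda$, $\mu$ we have $\phi_{T_0}\circ\square_{\lambda,s,t}=0$ for all $s,t$.

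Here I would use Lemma \ref{comp2}: since $A_{T_0}$ is diagonal apart from the entry $q$ in position $(i,i+1)$, the sum (\ref{eq11}) computing $\phi_{T_0}\circ\square_{\lambda,s,t}(e^{\lambda(s,t)})$ is short. For $s\neq i$, each tableau occurring in it restricts, on rows $s$ and $s+1$, to a two-row tableau whose top row is $u^{(\mu_s)}$ for a single letter $u$ and whose bottom row starts with $t$ copies of $u$; since $t\ge1$, in the notation of Lemma \ref{lem15} this has $a_1+b_1=\mu_s+t>\mu_s=\nu_1$, hence it is $0$ by Lemma \ref{lem15}(1), and so, by Lemma \ref{insertrows}, is the whole sum. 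For $s=i$, (\ref{eq11}) equals $\sum_{t_1}\tbinom{\lambda_i+t_1}{t_1}[S_{t_1}]$, where $S_{t_1}$ is obtained from $T_0$ by replacing rows $i,i+1$ with $i^{(\lambda_i+t_1)}(i+1)^{(q-t_1)}$ and $i^{(t-t_1)}(i+1)^{(\lambda_{i+1}-q-t+t_1)}$. When $t>q$ every $[S_{t_1}]$ vanishes by Lemma \ref{lem15}(1); when $t\le q$, straightening rows $i,i+1$ via Lemma \ref{insertrows} and Lemma \ref{lem15}(2) turns $[S_{t_1}]$ into $(-1)^{t-t_1}\tbinom{\lambda_{i+1}-q}{t-t_1}[W_t]$, with $W_t$ the unique member of $\mathrm{SST}_{\lambda(i,t)}(\mu)$. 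Summing over $t_1$ and applying the Vandermonde-type identity Lemma \ref{bin2}(2) (with $a=\lambda_i$, $b=\lambda_{i+1}-q$, $c=t$) yields $\phi_{T_0}\circ\square_{\lambda,i,t}(e^{\lambda(i,t)})=\tbinom{\lambda_i-\lambda_{i+1}+q+t}{t}\,[W_t]$. Therefore $\phi_{T_0}$ descends to a map $\Delta(\lambda)\to\Delta(\mu)$, necessarily nonzero since it sends the class of $e^\lambda$ to $[T_0]$, precisely when $\tbinom{\lambda_i-\lambda_{i+1}+q+t}{t}\equiv0\bmod p$ for $t=1,\dots,q$, which by Lemma \ref{bin1}(2) amounts to $p^{l_p(q)}\mid\lambda_i-\lambda_{i+1}+q+1$. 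This is exactly condition (\ref{CPdiv}) when $j=i+1$, proving the theorem in the adjacent case.

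For $j>i+1$ I would pursue the same strategy, but now $\mathrm{SST}_\lambda(\mu)$ usually contains more than one tableau, and for the obvious candidate $T_0$ the composition $\phi_{T_0}\circ\square_{\lambda,i+1,t}$ no longer vanishes. One must instead look for $\Phi=\sum_T c_T\phi_T$ with scalars $c_T\in K$ chosen so that all of the compositions $\Phi\circ\square_{\lambda,s,t}$ vanish. After reducing each such composition to two-row straightening in the manner of Lemma \ref{lem15}, this becomes a system of binomial congruences to be met simultaneously, of the kind controlled by Lemma \ref{bin2} and Lemma \ref{bin1}. Pinning down the coefficients $c_T$ and forcing all the cancellations through at once is, I expect, the main obstacle; it is precisely the content of Carter and Payne's original argument (carried out via their operators in the hyperalgebra of $G$), to which I refer for the general case.
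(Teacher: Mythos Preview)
The paper does not prove this theorem at all; it is quoted as a classical result and attributed to \cite{CP}. So there is no ``paper's proof'' to match, and your proposal actually goes further than the paper by supplying an argument in the adjacent case $j=i+1$.

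Your adjacent-case argument is correct. The uniqueness of $T_0\in\mathrm{SST}_\lambda(\mu)$, the vanishing of $\phi_{T_0}\circ\square_{\lambda,s,t}$ for $s\neq i$ via Lemma~\ref{lem15}(1) and Lemma~\ref{insertrows}, the straightening for $s=i$ via Lemma~\ref{lem15}(2), and the final binomial identification with (\ref{CPdiv}) all check out. One small point: Lemma~\ref{lem15} is stated for the entry $1$, whereas you apply it with the minimal entry $u$ in rows $s,s+1$; this is harmless (the proof in \cite{MS3} does not use that the entry is literally $1$, only that it is the smallest one present), but you should say so.

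For $j>i+1$ you do not give a proof: you describe the shape of the problem and then refer back to \cite{CP}. That is honest, and it is exactly what the paper itself does. But it means your proposal is not an independent proof of Theorem~\ref{CPthm}; it is a self-contained proof of the special case $j=i+1$ together with a pointer to the literature for the rest. If you wanted to push the present method through for general $j$, the coefficients $c_T$ you allude to are essentially those appearing in Carter--Payne's hyperalgebra operator (a product of divided powers of root vectors), and the cancellations amount to the same Vandermonde/Lucas manipulations you used here, iterated over the rows $i,i+1,\dots,j$; this is genuinely more intricate than the adjacent case and is not a routine extension.
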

	The dimensions of the hom spaces $\Hom_G(\Delta(\lambda), \Delta(\mu))$, where $\lambda, \mu$ are a pair of Carter-Payne partitions, are not known in general to the best of our knowledge. They are known to be equal to 1 when $q=1$ in (\ref{CPpar}) and $p>2$, \cite{EM}.
	
	Now let us consider again partitions $\lambda,  \mu $ as  in (\ref{deflambda}), where $q$ satisfies (\ref{defq}), $\mu \in \La^+(n,r)_{m-1}$ and $\mu$ has distinct parts. Suppose in addition that $p^{l_p(q)}$ divides $\mu_1-q+m$. Then $\lambda, \mu$ are a pair of Carter-Payne partitions (for $i=1$ and $j=m+1$) and by Theorem \ref{CPthm} we have $\Hom_G(\Delta(\lambda), \Delta(\mu)) \neq 0$. On easily checks that $\lambda +p^t\nu, \mu +p^t\nu$ are a pair of Carter-Payne partitions (for $i=1$ and $j=m+1$) for every $t \ge l_p(q)$. Hence the dimensions $d_{p^t}$ are nonzero for all $ t \ge l_p(q)$. The new information provided by Corollary \ref{stab} is that these dimensions are in fact equal and the common value is equal to $\dim \Hom_G(\Delta(\la),\Delta(\mu)).$
	
	(3) Theorem \ref{thm13} may be useful in computing dimensions of hom spaces between Weyl modules for particular cases. For example, let $p=3$ and \begin{align*}\lambda&=(11, 10, 7, 3, 3), \\ \mu &=(14, 10, 7, 3). \end{align*} For $g=3$ one checks that  $e_1=e_2=e_3=3$ and thus $l_p(e_i)=2, \ i=1,2,3$. From Theorem \ref{thm13} we conclude that $\Hom_G(\Delta(\la),\Delta(\mu)) \simeq \Hom_{G}(\Delta(\la+\gamma), \Delta(\mu+\gamma))$ for any partition $\gamma = (\gamma_1, \gamma_2, \gamma_3)$ such that every $\gamma_i$ is divisible by $3^2$. Using the GAP4 program written by M. Fayers \cite{F}, one finds $\dim\Hom_G(\Delta(\lambda), \Delta(\mu))=2.$ Hence \[\dim\Hom_G(\Delta(\lambda +\gamma), \Delta(\mu + \gamma))=2\] for all  $\gamma$ as above.

		(4) Next, we show by examples that none of the hypotheses $\la \in P(\mu)$ and $ \mu \in \La^{+}(n)_g$ of Theorem \ref{thm13} may be omitted.
		
		\ \ (a) Let $p=3$, $\la=(4,3,2,2)$, $\mu=(5,5,1)$ and $\gamma=(6, 3)$. Here, $g=2<3=m$ and $e_1=1, e_2=2$. Also $\la \notin P(\mu)$ and $\mu \in \La^{+}(n)_g$. Using \cite{F}, one finds  \begin{center}
			$\Hom_G(\Delta(\la),\Delta(\mu))=0$ and $\Hom_{G}(\Delta(\la+\gamma), \Delta(\mu+\gamma)) \neq 0$.
		\end{center}
		
		\ \ (b) Let $p=2$, $\lambda = (5,4,1,1)$, $\mu=(8,2,1)$ and $\gamma = (4,2)$. Here, $g=2<3=m$ and $e_1=3, e_2=1$. Also $\la \in P(\mu)$ and $\mu \notin \La^{+}(n)_g$. Using \cite{F} one finds \begin{center}
			$\Hom_G(\Delta(\la),\Delta(\mu)) \neq 0$ and $\Hom_{G}(\Delta(\la+\gamma), \Delta(\mu+\gamma)) = 0$.
		\end{center}
	\end{examples}

	\subsection{Proof of Theorem \ref{thm13}.} \label{s4.2} We note that $\lambda \unlhd \mu$ if and only if $\lambda +\gamma \unlhd \mu + \gamma$. Since $\Delta{(\lambda)}$ is a cyclic $G$-module generated by an element of weight $\lambda$ and since every weight $\alpha$ of $\Delta(\mu)$ satisfies $\alpha \unlhd \mu$ (\cite[II 2.2 Prop]{Jan}), it follows that both Hom spaces are zero if $\lambda \ntrianglelefteq \mu$ and thus we assume $\lambda \unlhd \mu$.

We need some notation. If $\al=(\al_1,\dots,\al_n) \in \La(n,r)$ and $s, t$ are integers such that $1 \le s \le n-1$ and $1 \le t \le \al_{s+1} $, let us denote the sequence $(\al_1,\dots,\al_s+t, \al_{s+1}-t,\dots,\al_n) \in \La(n,r)$ by $\al({s,t})$, \[\al({s,t}) = (\al_1,\dots,\al_s+t, \al_{s+1}-t,\dots,\al_n). \] Also let \[\al^{\vee} =\al+\gamma.\]

Taking into account the presentations of $\Delta(\mu)$ and $\Delta(\mu ^{\vee})$ from Section 2.4, consider for each $s=1,\dots,n-1$ the following diagram,

\begin{equation}\label{diag1} 
	\begin{tikzcd}
		\Hom_G(D(\lambda),\Delta(\mu))\arrow{d}{\psi_{\la}} \arrow{rrrr}{ \pi_s \circ \Hom_G(\square_{\la},\Delta({\mu}))}
		&&&& \sum_{t=1}^{\lambda_{s+1}}\Hom_G(D(\la({s,t})),\Delta(\mu)) \arrow{d}{\sum_{t=1}^{\lambda_{s+1}}{\psi}_{\la(s,t)}} \\ \arrow{rrrr}{ \pi^{\vee}_s \circ \Hom_{G}(\square_{\la^{\vee}},\Delta({\mu^{\vee}}))}
		\Hom_{G}(D(\lambda^{\vee}),\Delta(\mu^{\vee})) 
		&&&& \sum_{t=1}^{\lambda^{\vee}_{s+1}}\Hom_{G}(D(\la^{\vee}({s,t})),\Delta(\mu^{\vee}))
	\end{tikzcd}
\end{equation}
where $\psi_{\la}$ and $\psi_{\la(s,t)}$ are the isomorphisms from Lemma \ref{lem12}(2) and the restriction of the right vertical map on the summand  $\Hom_G(D(\la({s,t})),\Delta(\mu))$ is the map \[\psi_{\la(s,t)} : \Hom_G(D(\la({s,t})),\Delta(\mu)) \to \Hom_{G}(D(\la^{\vee}({s,t})),\Delta(\mu^{\vee})),\] where $1 \le t \le t_{s+1}$. Also, $\pi_{s}$ and $\pi^{\vee}_{s}$ are the indicated natural projections \[\sum_{s=1}^{n-1}\sum_{t=1}^{\lambda_{s+1}}\Hom_G(D(\la(s,t)),\Delta(\mu)) \to \sum_{t=1}^{\lambda_{s+1}}\Hom_G(D(\la(s,t)),\Delta(\mu)),\] \[\sum_{s=1}^{n-1}\sum_{t=1}^{\lambda_{s+1}}\Hom_G(D(\la^{\vee}(s,t)),\Delta(\mu)) \to \sum_{t=1}^{\lambda_{s+1}}\Hom_G(D(\la^{\vee}(s,t)),\Delta(\mu)),\] respectively. We note that the vertical map on the right in diagram (\ref{diag1}) is in general a monomorphism since the number of summands in the codomain is equal to $\la^{\vee}_{s+1}$, the number f summands in the domain is equal to  $\la_{s+1}$ and we have $\la_{s+1} \le \la_{s+1} + \gamma_{s+1}= \la^{\vee}_{s+1}$. We intend to show that diagram (\ref{diag1}) is commutative for all $s$. If this is the case, then by taking kernels of the horizontal maps of diagram (\ref{diag1}) we obtain the commutative diagram
\begin{equation}\label{diag2} 
	\begin{tikzcd}[scale cd=0.94]
	Z \arrow[r, hook] \arrow[d] &[-1em]	\Hom_G(D(\lambda),\Delta(\mu))\arrow{d}{\psi_{\la}} \arrow{rrrr}{ \pi_s \circ \Hom_G(\square_{\la},\Delta({\mu}))}
		&&&&[-1em]\sum_{t=1}^{\lambda_{s+1}}\Hom_G(D(\la({s,t})),\Delta(\mu)) \arrow{d}{\sum_{t=1}^{\lambda_{s+1}}{\psi}_{\la(s,t)}} \\ Z'  \arrow[r, hook] & \arrow{rrrr}{ \pi^{\vee}_s \circ \Hom_{G}(\square_{\la^{\vee}},\Delta({\mu^{\vee}}))}
		\Hom_{G}(D(\lambda^{\vee}),\Delta(\mu^{\vee})) 
		&&&& \sum_{t=1}^{\lambda^{\vee}_{s+1}}\Hom_{G}(D(\la^{\vee}({s,t})),\Delta(\mu^{\vee}))
	\end{tikzcd}
\end{equation}
where $Z=\Hom_G(\Delta(\lambda),\Delta(\mu)), Z'=\Hom_G(\Delta(\lambda^{\vee}),\Delta(\mu^{\vee}))$ and the left vertical map is the restriction of $\psi_{\lambda}$ to $Z$. In diagram (\ref{diag2}) the map $\psi_{\lambda}$ is an isomorphism and the map  $\sum_{t=1}^{\lambda_{s+1}}{\psi}_{\la(s,t)}$ is monomorphism as we noted before. By applying the Five Lemma \cite[Proposition 2.72(i),(ii)]{Ro}, we will have $Z \simeq Z'$, that is $\Hom_G(\Delta(\la),\Delta(\mu)) \simeq$ $ \Hom_{G}(\Delta(\la+\gamma), \Delta(\mu+\gamma))$.

We now show that diagram (\ref{diag1}) is commutative. Fix $s$ such that $1 \le s \le n-1$. Let $ T \in \st$ with corresponding matrix $A=(a_{ij})$. Since $T$ is semistandard, $A$ is upper triangular. In particular, $a_{i, s+1}=0$ for all $i>s+1$. Using this and Corollary \ref{corhom}, we see that the image of $\phi_T$ under the top horizontal map of diagram (\ref{diag1}) is equal to \begin{equation}\label{eq111}
	\sum_{t=1}^{\lambda_{s+1}}\sum_{t_1,\dots,t_{s+1}}\tbinom{a_{1s}+t_1}{t_1}\tbinom{a_{2s}+t_2}{t_2} \cdots \tbinom{a_{ss}+t_s}{t_s}\phi_{T(s,t;t_1,\dots,t_{s+1},0,\dots,0)},\end{equation}
	where the right sum is over all  $t_1,\dots,t_{s+1} \in \mathbb{N} $ such that \begin{equation}\label{ineq1111}t_{1}+\dots+t_{s+1}=t,\  a_{1,s+1}-t_1 \ge 0, \ \dots, \ a_{s+1,s+1}-t_{s+1} \ge 0.\end{equation}
We note that the tableau $T(s,t;t_1,\dots,t_{s+1},0,\dots,0)$ has weight $\la(s,t)$.

For the counterclockwise direction we note that the matrix $(a^{\vee}_{ij})$ of the tableau $T^{\vee} \in \mathrm{SST}_{\la+\al}(\mu+\al)$ is given by $a^{\vee}_{ii} = a_{ii}+\gamma_i$ for all $i$ and $a^{\vee}_{ij} = a_{ij}$ for all $i \neq j$. According to Corollary \ref{corhom}, the image of $\phi_T$ under the counterclockwise direction of diagram (\ref{diag1}) is equal to \begin{equation}\label{eq211}
	\sum_{t=1}^{\lambda_{s+1}+\gamma_{s+1}}\sum_{u_1,\dots,u_{s+1}}\tbinom{a_{1s}+u_1}{u_1}\tbinom{a_{2s}+u_2}{u_2} \cdots \tbinom{a_{ss}+\gamma_s+u_s}{u_s}\phi_{T^{\vee}(s,u;u_1,\dots,u_{s+1},0,\dots,0)},\end{equation}
where the right sum is over all $u_1,\dots,u_{s+1} \in \mathbb{N}$ such that \begin{align*}&u_{1}+\dots+u_{s+1}=u, \  a_{1,s+1}-u_1 \ge 0, \  \dots, \  a_{s,s+1}-u_s \ge 0, \\ &a_{s+1,s+1}+\gamma_{s+1}-u_{s+1} \ge 0.\end{align*}

Recall that we are assuming $g<m$. We distinguish three cases according to the relative size of $s.$ 

\textbf{Case 1.} Suppose $g<m \le s$. Then from the definition we have \begin{equation}\label{eq13}T(s,t;t_1,\dots,t_{s+1},0,\dots,0)=\begin{matrix*}[l]
		1^{(a_{11})} \cdots s^{(a_{1s}+t_1)}(s+1)^{(a_{2,s+1}-t_1)} \cdots n^{(a_{1n})} \\
		\cdots \\ 
		m^{(a_{mm})}\cdots s^{(a_{ms}+t_m)}(s+1)^{(a_{m,s+1}-t_m)} \cdots n^{(a_{mn})}   \end{matrix*}.\end{equation}
The weight of this tableau is $\la(s,t)$ and thus Lemma \ref{rem11}(3) yields $\la(s,t) \in P(\mu)$. Hence we may apply Lemma \ref{lem12} for $\la(s,t)$ in place of $\al$ and for $\gamma=0$ to conclude from (\ref{eq13}) that $T(s,t;t_1,\dots,t_{s+1},0,\dots,0)$ is semistandard. Thus we may apply the maps of Lemma \ref{lem12}(3) to  (\ref{eq111}) and we obtain that the image of $\phi_T$  in the clockwise direction of diagram (\ref{diag1}) is equal to 
\begin{equation}\label{eq14}
	\sum_{t=1}^{\lambda_{s+1}}\sum_{t_1,\dots,t_{s+1}}\tbinom{a_{1s}+t_1}{t_1}\tbinom{a_{2s}+t_2}{t_2} \cdots \tbinom{a_{ss}+t_s}{t_s}\phi_{T(s,t;t_1,\dots,t_{s+1},0,\dots,0)^{\vee}},\end{equation}
where the right sum is over conditions (\ref{ineq1111}).
By assumption we have $g < s$ and hence $\gamma_s=\gamma_{s+1}=0 $ and $\la^{\vee}_{s+1}=\la_{s+1}$. Thus $T(s,t;t_1,\dots,t_{s+1},0,\dots,0)^{\vee}$ $=T^{\vee}(s,t;t_1,\dots,t_{s+1},0,\dots,0)$ and (\ref{eq14}) and (\ref{eq211}) are equal.

\textbf{Case 2.} Suppose $g<s<m.$ As before, the image of $\phi_T$ under the top horizontal map of diagram (\ref{diag1}) is equal to (\ref{eq11}). Since $s<m$, we have \begin{equation}T= \begin{matrix*}[l]
		1^{(a_{11})} \cdots n^{(a_{1n})} \\
		\cdots \\ 
		s^{(a_{ss})} \cdots n^{(a_{sn})} \\
		(s+1)^{(a_{s+1,s+1})} \cdots n^{(a_{s+1,n})}\\
		\cdots\\
		m^{(a_{mm})} \cdots n^{(a_{mn})}  \end{matrix*}\end{equation}
and thus 
\begin{equation*}\label{case21}T(s,t;t_1,\dots,t_{s+1},0,\dots,0)= \begin{matrix*}[l]
		1^{(a_{11})} \cdots s^{(a_{1s}+t_1)}(s+1)^{(a_{1s+1}-t_1)} \cdots n^{(a_{1n})} \\
		\cdots \\ 
		s^{(a_{s,s}+t_s)}(s+1)^{(a_{s,s+1}-t_s)} \cdots n^{(a_{s,n})} \\
		s^{(t_{s+1})}(s+1)^{(a_{s+1,s+1}-t_{s+1})} \cdots n^{(a_{s+1,n})}\\
		(s+2)^{(a_{s+2,s+2})}\cdots n^{(a_{s+2,,n})} \\
		\cdots\\
		m^{(a_{mm})} \cdots n^{(a_{mn})}  \end{matrix*}.\end{equation*}
	This last tableau is not in general semistandard because of the appearance of $s^{(t_{s+1})}$ in the $s+1$ row. Consider the tableau \begin{equation}\label{eq117}T(s,t;t_1,\dots,t_{s+1},0,\dots,0)[s,s+1]=\begin{matrix*}[l] 
			s^{(a_{ss}+t_s)}(s+1)^{(a_{s,s+1}-t_s)} \cdots n^{(a_{sn})} \\
			s^{(t_{s+1})}(s+1)^{(a_{s+1,s+1}-t_{s+1})} \cdots n^{(a_{s+1,n})} \end{matrix*} \end{equation} consisting of rows $s$ and $s+1$ of $T(s,t;t_1,\dots,t_{s+1},0,\dots,0)$. If $a_{ss}+t_s+t_{s+1} > \mu_s$, then $T(s,t;t_1,\dots,t_{s+1},0,\dots,0)[s,s+1]=0$ according to Lemma \ref{lem15}(1). So suppose \begin{equation}\label{case2n1}a_{ss}+t_s+t_{s+1} \le \mu_s.\end{equation} Applying Lemma \ref{lem15}(2) we have that $[T(s,t;t_1,\dots,t_{s+1},0,\dots,0)[s,s+1]]$ is equal to \begin{align*}
		&(-1)^{t_{s+1}}\sum_{k_{s+1},\dots,k_n}\tbinom{a_{s+1s+1}-t_{s+1}+k_{s+1}}{k_{s+1}}\cdots\tbinom{a_{s+1n}+k_n}{k_n}\\
		&\begin{bmatrix*}[l] 
			s^{(a_{ss}+t_s+t_{s+1})}(s+1)^{(a_{s,s+1}-t_s-k_{s+1})} \cdots n^{(a_{s,n-k_n})} \\
			(s+1)^{(a_{s+1,s+1}-t_{s+1}+k_{s+1})} \cdots n^{(a_{s+1,n}+k_n)} \end{bmatrix*},
	\end{align*} 
	where the sum ranges over all
	$k_{s+1},\dots,k_n \in \mathbb{N}$ such that \begin{equation}\label{case23}k_{s+1}+\dots+k_n=t_{s+1}, \  k_{s+1} \le a_{s,s+1}-t_s, \  k_{s+2} \le a_{s,s+2}, \  \dots, \  k_n \le a_{sn}.\end{equation}
	Upon substitution in (\ref{eq111}) according to Lemma \ref{insertrows} we obtain
	\begin{align*}
		&\sum_{t=1}^{\lambda_{s+1}}\sum_{t_1,\dots,t_{s+1}}\tbinom{a_{1s}+t_1}{t_1}\tbinom{a_{2s}+t_2}{t_2} \cdots \tbinom{a_{ss}+t_s}{t_s}(-1)^{t_{s+1}}\\& \sum_{k_{s+1},\dots,k_n}\tbinom{a_{s+1,s+1}-t_{s+1}+k_{s+1}}{k_{s+1}}\cdots\tbinom{a_{s+1,n}+k_n}{k_n}\phi_{T(s,t;t_1,\dots,t_{s+1},0,\dots,0)_{k_{s+1},\dots,k_n}},
	\end{align*} 
	where  \begin{align}\label{eq1110}T(s,t&;t_1,\dots,t_{s+1},0,\dots,0)_{k_{s+1},\dots,k_n}= \\\nonumber&\begin{matrix*}[l]
			1^{(a_{11})} \cdots s^{(a_{1s}+t_{1})}(s+1)^{(a_{1,s+1}-t_1)} \cdots n^{(a_{1n})} \\
			\vdots \\ 
			(s-1)^{(a_{s-1,s-1})}s^{(a_{s-1,s}+t_{s-1})}(s+1)^{(a_{s-1,s+1}-t_{s-1})} \cdots n^{(a_{s-1,n})} \\
			s^{(a_{ss}+t_s+t_{s+1})}(s+1)^{(a_{s,s+1}-t_s-k_{s+1})} \cdots n^{(a_{s,n-k_n})} \\
		(s+1)^{(a_{s+1,s+1}-t_{s+1}+k_{s+1})} \cdots n^{(a_{s+1,n}+k_n)}\\
			(s+2)^{(a_{s+2,s+2})}\cdots n^{(a_{s+2,n})} \\
			\vdots\\
			m^{(a_{mm})} \cdots n^{(a_{mn})}  \end{matrix*},\end{align} the middle sum is over all  $t_1,\dots,t_{s+1} \in \mathbb{N}$ subject to (\ref{ineq1111}), (\ref{case2n1}), and the right sum is over all  $k_{s+1},\dots,k_{n} \in \mathbb{N}$ subject to (\ref{case23}). 
	
	The weight of each tableau in (\ref{eq1110}) is equal to $\la(s,t)$ and by Lemma \ref{rem11}(3) we have $\la(s,t) \in P(\mu)$. Moreover, each tableau in (\ref{eq1110}) is row semistandard and has the property that for each $i=1,\dots,n$ all its entries equal to $i$ are contained in the first $i$ rows.  Hence we may apply Lemma \ref{lem12}(1) for $\la(s,t)$ in place of $\al$ and for $\gamma=0$ to conclude that $T(s,t;t_1,\dots,t_{s+1},0,\dots,0)_{k_{s+1},\dots,k_n}$ is semistandard. Thus we may apply the maps of Lemma \ref{lem12}(3) to obtain that the image of $\phi_T$ in the clockwise direction of diagram (\ref{diag1}) is equal to 
	\begin{align}\label{case25}
		&\sum_{t=1}^{\lambda_{s+1}}\sum_{\substack{t_1,\dots,t_{s+1}}}\tbinom{a_{1s}+t_1}{t_1}\tbinom{a_{2s}+t_2}{t_2} \cdots \tbinom{a_{ss}+t_s}{t_s}(-1)^{t_{s+1}}\\& \sum_{k_{s+1},\dots,k_n}\tbinom{a_{s+1,s+1}-t_{s+1}+k_{s+1}}{k_{s+1}}\cdots\tbinom{a_{s+1,n}+k_n}{k_n}\phi_{(T(s,t;t_1,\dots,t_{s+1},0,\dots,0)_{k_{s+1},\dots,k_n})^{\vee}}\nonumber,
	\end{align} where the sums are over the same conditions as in the previous paragraph.
	
A similar computation shows that the image of $\phi_T$ in the counterclockwise direction of diagram (\ref{diag1}) is equal to \begin{align}\label{case26}
	&\sum_{u=1}^{\lambda_{s+1}+\gamma_{s+1}}\sum_{u_1,\dots,u_{s+1}}\tbinom{a_{1s}+u_1}{u_1}\tbinom{a_{2s}+u_2}{u_2} \cdots \tbinom{a_{ss} + \gamma_s + u_s}{u_s}(-1)^{u_{s+1}}\\& \sum_{l_{s+1},\dots,l_n}\tbinom{a_{s+1,s+1}+\gamma_{s+1}-u_{s+1}+l_{s+1}}{l_{s+1}}\cdots\tbinom{a_{s+1,n}+l_n}{l_n}\phi_{T^{\vee}(s,u;u_1,\dots,u_{s+1},0,\dots,0)_{l_{s+1},\dots,l_n}}\nonumber,
\end{align} where the middle sum is over all  $u_1,\dots,u_{s+1} \in \mathbb{N}$ such that \begin{align}\label{case27}&u_{1}+\dots+u_{s+1}=u,\\\label{case28}&a_{1s+1}-u_1 \ge 0, \ \dots, \ a_{s,s+1}-u_{s} \ge 0, \ a_{s+1,s+1}+\gamma_{s+1}-u_{s+1} \ge 0, \\\label{case29}&a_{ss} + \gamma_s +u_s+u_{s+1}  \le \mu_s +\gamma_s,\end{align}
and the right sum is over all 
$l_{s+1},\dots, l_n \in \mathbb{N}$ such that \begin{equation}\label{case211}l_{s+1}+\dots+l_n=u_{s+1}, \  l_{s+1} \le a_{s,s+1}-u_s, \  l_{s+2} \le a_{s,s+2}, \  \dots, \  l_n \le a_{sn}.\end{equation} 
By assumption we have $g<s$ and thus in particular $\gamma_s = \gamma_{s+1}=0$. Hence (\ref{case25}) and (\ref{case26}) are equal.

So far we have not used the divisibility hypotheses of the theorem. Recall the notation $c_s= \sum_{i=1}^{s}(\mu_i - \lambda_i)$,\ $s=1,\dots,n$.

\textbf{Case 3.} Suppose $1 \le s \le g.$ Exactly as in Case 2, the image of $\phi_T$ in the clockwise direction of diagram (\ref{diag1}) is equal to (\ref{case25}), where the middle and right sums are over all  $t_1,\dots,t_{s+1},k_{s+1},\dots,k_n \in \mathbb{N}$ subject to (\ref{ineq1111}), (\ref{eq117}), (\ref{case23}), and the image of $\phi_T$ in the counterclockwise direction of diagram (\ref{diag1}) is equal to (\ref{case26}), where the middle and right sums are over all  $u_1,\dots,u_{s+1},l_{s+1},\dots,l_n \in \mathbb{N}$ subject to (\ref{case27}) - (\ref{case211}).

Claim 1. We claim that in the left sum of (\ref{case25}), we may assume that $t \le c_s$ if $s<g$ and that $t \le \min\{\lambda_{g+1}, c_g\}$ if $s=g$. Indeed, every tableau in  (\ref{case25}) has the property that all elements $1,\dots,s$ are located in the first $s$ rows and thus $t \le c_s$ by Remark \ref{obv}. By Lemma \ref{rem11}(1), we have $\la_{s+1} \ge c_s$ if $s<g$. Hence we may assume in (\ref{case25}) that $t \le c_s$ if $s<g$. If $s=g$, then by Remark \ref{obv}, $t \le c_g$ and we may assume in (\ref{case25}) that $t \le \min\{\lambda_{g+1}, c_g\}$

Claim 2. We claim that in the left sum of (\ref{case26}), we may assume that $u \le c_s$ if $s<g$ and that $u \le \min\{\lambda_{g+1}, c_g\}$ if $s=g$. Indeed, the proof is almost identical to the previous proof (the $\gamma$'s cancel out) except when $s=g$ in which case $\gamma_{s+1}=0$ by assumption of the Theorem \ref{thm13}.
 
 Claim 3. Consider the last inequality in (\ref{case28}), that is $a_{s+1,s+1}+\gamma_{s+1}-u_{s+1} \ge 0$.  We claim that in the middle sum of (\ref{case26}), we may assume $a_{s+1,s+1}-u_{s+1} \ge 0$. Indeed, this is clear if $g=s$, since $\gamma_{g+1}=0.$ So let $s<g$, whence $s<g<m$ by the assumption of the theorem. Suppose  $a_{s+1,s+1}-u_{s+1}<0$.  Then from inequality (\ref{case29}) we have
 \[\mu_s \ge a_{ss}+u_s+u_{s+1} > a_{ss}+u_s+a_{s+1, s+1} \ge a_{ss}+a_{s+1, s+1}.\]
 Since $s+2 \le m$, we may apply Lemma \ref{lem12}(1) (for $\alpha = \la, \gamma = 0$) to conclude that  $a_{ss} \ge \mu_{s+1}$ and $a_{s+1,s+1} \ge \mu_{s+2}$. Hence $\mu_s > \mu_{s+1}+\mu_{s+2}$ which contradicts the hypothesis $\mu \in \La^{+}(n,r)_g$.
 
 Claim 4. We have the equalities in $K$ 
 \begin{equation*}\label{eq115}\tbinom{a_{ss} + \gamma_s + u_s}{u_s}=\tbinom{a_{ss}  + u_s}{u_s}, \  \tbinom{a_{s+1,s+1}+\gamma_{s+1}-u_{s+1}+l_{s+1}}{l_{s+1}}=\tbinom{a_{s+1,s+1}-u_{s+1}+l_{s+1}}{l_{s+1}}.\end{equation*}
 Indeed, from Claim 2 we have  $u \le c_s$ for all $s$ and thus $u_s \le c_s$. From the hypothesis of the theorem, $p^{l_p(u_s)}$ divides $\gamma_s$ which by Lemma \ref{bin1}(1) implies the first equality of Claim 4. For the second equality, we note it holds if $s=g$, since $\gamma_{g+1}=0$. Suppose $s<g$. From Claim 2 we have  $u \le c_s$ and thus from (\ref{case211}) and (\ref{case27}), $l_{s+1} \le c_s$. So from the hypothesis of the Theorem \ref{thm13}, $p^{l_p(l_{s+1})}$ divides $\gamma_{s+1}$. From this and Claim 3, it follows we may apply Lemma \ref{bin1}(1) to get the second equality of the claim.
 
 Now from Claims 1-4 it follows that (\ref{case25}) and (\ref{case26}) are equal. Thus diagram (\ref{diag1}) commutes in all three cases.
 
 The proof of Theorem \ref{thm13} is complete. \qed

\begin{remark} Suppose $\gamma=(\gamma_1)$ is a partition consisting of a singe part. In \cite{MS4} it was shown that the extension groups $\Ext^i_G(\Delta(\lambda), \Delta(\mu))$ and $\Ext^i_G(\Delta(\lambda + \gamma), \Delta(\mu +\gamma))$ are isomorphic as vector spaces under suitable conditions on $p, \lambda, \mu, \gamma_1$. Let us comment here how the special case of Theorem \ref{thm13} of the present paper for $\gamma = (\gamma_1)$ relates to the above result for $i=0$. 
	
	Suppose in the statement of Theorem \ref{thm13} we have $g=1$, so that $\gamma=(\gamma_1)$. According to Definition \ref{def} we have $\La^+(n)_1=\La^+(n)$ and according to the definition given in eq. (\ref{e}) we have \[e_1 =\min\{\lambda_{2}, \mu_1-\lambda_1\}.\] Thus from Theorem \ref{thm13} we conclude that if \begin{center}
		$\lambda \in P(\mu)$ and $\gamma_1 =p^d$, where $d > \min\{\lambda_{2}, \mu_1-\lambda_1\}$,
	\end{center} then $\Hom_G(\Delta(\la),\Delta(\mu)) = \Hom_{G}(\Delta(\la+(\gamma_1)), \Delta(\mu+(\gamma_1)))$. This result was obtained in \cite[Theorem 6.1]{MS4} under the weaker assumption $\mu_2 \le \lambda_1$ in place of $\lambda \in P(\mu)$.  However, let us mention without providing a proof here, that for $g=1$ the proof of Theorem \ref{thm13} is considerably simpler and in fact one may verify that it reduces to the proof of \cite[Theorem 6.1]{MS4}. In particular, one of the simplifications that occurs in this special case, is that we only need the first inequality $\mu_2 \le  \la_1$ of the inequalities in the definition of $\la \in P(\mu)$, see Definition \ref{defP}.
	\end{remark}
	
\section{A non vanishing result}
 
\subsection{Second main result}\label{s5.1}Let us fix a pair of partitions $\la, \mu \in \La^{+}(n,r)$ such that $\lambda \unlhd \mu$ and denote by $m$ the length of $\mu$. Recall the notation $c_s=\sum_{i=1}^{s}(\mu_i - \lambda_i)$ and the definition of $l_p(a)$ for a positive integer $a$ from Section 2.5. We let \begin{center}
	$c'_{m-1}=\min\{c_{m-1}, \lambda_{m}\}$ and $l_p(0)=0$. 
\end{center}

\begin{definition}\label{defpsi}Let $\psi$ be the map \[\psi : D(\la) \to \Delta(\mu), \ \psi = \sum_{T \in \mathrm{SST}_{\la}(\mu)}\phi_T \] corresponding to the sum of all semistandard tableaux of weight $\lambda$ and shape $\mu$. \end{definition} Our second main result  is the following.
\begin{theorem}\label{nonv1}
	Let $\la, \mu \in \La^{+}(n,r)$ such that $\la \unlhd \mu$ and $\la \in P(\mu)$. If 
		\begin{enumerate}
		\item $p^{l_p(c_s)}$ divides $\la_s-\mu_{s+1}+1$ for all $s=1,\dots,m-2$  and $p^{l_p(c'_{m-1})}$ divides $\la_{m-1}-\mu_{m}+1$, and
		\item $p^{l_p(\lambda_{s+1})}$ divides $\la_s+1$ for all $s=m,\dots,n-1$,
	\end{enumerate}
	then $\psi$ induces a nonzero homomorphism $\Delta(\la) \to \Delta(\mu)$.
\end{theorem}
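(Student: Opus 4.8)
The plan is to show that $\psi=\sum_{T\in\mathrm{SST}_\la(\mu)}\phi_T$ descends through the presentation of $\Delta(\la)$, i.e.\ that $\psi\circ\square_{\la,s,t}=0$ for every $s\in\{1,\dots,n-1\}$ and every $t\in\{1,\dots,\la_{s+1}\}$, and then, separately, to prove that the induced map is nonzero by exhibiting a coefficient that survives. For the first part, fix $s$ and $t$ and compute $\psi\circ\square_{\la,s,t}(e^{\la(s,t)})=\sum_{T}\phi_T\circ\square_{\la,s,t}(e^{\la(s,t)})$ using Corollary~\ref{corhom}: each summand is a sum of $\phi_{T(s,t;t_1,\dots,t_n)}$ with explicit binomial coefficients $\prod_i\binom{a_{is}+t_i}{t_i}$. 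Since $\la\in P(\mu)$, Lemma~\ref{rem11}(3) gives $\la(s,t)\in P(\mu)$, so after straightening (via Lemma~\ref{lem15} and Lemma~\ref{insertrows} in the ranges where the resulting tableaux fail to be semistandard, exactly as in Case~2 and Case~3 of the proof of Theorem~\ref{thm13}) every term is expressed in the semistandard basis $\{\phi_{T'}:T'\in\mathrm{SST}_{\la(s,t)}(\mu)\}$, which is a basis by the Proposition of \cite{AB}. The goal is then to show that, for each fixed target tableau $T'$, the total coefficient of $\phi_{T'}$ obtained by summing over all $T\in\mathrm{SST}_\la(\mu)$ vanishes mod $p$.

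The combinatorial heart is a Vandermonde-type cancellation. Grouping the contributions by which $T'$ they produce, the coefficient of a given $\phi_{T'}$ is a sum of products of binomial coefficients of the two shapes appearing in Lemma~\ref{bin1}(2): namely sums $\binom{a+1}{1}+\binom{a+2}{2}+\cdots$ over a consecutive range, which vanish mod $p$ precisely when $p^{l_p(\text{range length})}$ divides $a+1$. Here the relevant ``$a+1$'' will be $\la_s-\mu_{s+1}+1$ (for $s\le m-2$, with range controlled by $c_s$, using Remark~\ref{obv} to bound $t\le c_s$), $\la_{m-1}-\mu_m+1$ (for $s=m-1$, range controlled by $c'_{m-1}=\min\{c_{m-1},\la_m\}$), and $\la_s+1$ (for $s\ge m$, range $\la_{s+1}$, where the tableau has only $m$ rows and the straightening is that of the two-rowed case). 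Thus hypotheses (1) and (2) are exactly what make each such inner sum $\equiv 0\bmod p$, hence $\psi\circ\square_{\la,s,t}=0$ and $\psi$ factors through $\pi_{\D(\la)}$ to give $\bar\psi:\Delta(\la)\to\Delta(\mu)$.

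For nonvanishing of $\bar\psi$, note $\bar\psi([\text{highest weight generator}])=\psi(e^\la)=\sum_{T\in\mathrm{SST}_\la(\mu)}\phi_T(e^\la)=\sum_{T\in\mathrm{SST}_\la(\mu)}[T]$ by Remark~\ref{remarkphi}. By Theorem~\ref{standard} the elements $[T]$, $T\in\mathrm{SST}(\mu)$, are linearly independent in $\Delta(\mu)$, so this sum is visibly nonzero as soon as $\mathrm{SST}_\la(\mu)\neq\varnothing$; and $\la\unlhd\mu$ guarantees $\mathrm{SST}_\la(\mu)\neq\varnothing$ (the Kostka number $K_{\mu\la}$ is positive). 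Hence $\bar\psi\neq 0$.

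The main obstacle will be the bookkeeping in the first part: organizing the straightening of $T(s,t;t_1,\dots,t_n)$ uniformly across the three size regimes for $s$ (as in Theorem~\ref{thm13}), identifying which semistandard $T'$ a given straightened term lands on, and—crucially—verifying that when one collects all contributions to a fixed $\phi_{T'}$ the binomial sums genuinely have the form $\sum_{j}(-1)^{?}\binom{\cdot+j}{j}$ over a \emph{full} consecutive range of the length dictated by $c_s$ (resp.\ $c'_{m-1}$, resp.\ $\la_{s+1}$), so that Lemma~\ref{bin1}(2) applies; here the inequalities packaged in $\la\in P(\mu)$ and Lemma~\ref{rem11}(1),(2) (ensuring $\la_{s+1}\ge c_s$, etc., so the range is not artificially truncated) and Remark~\ref{remnewcond} are what close the argument. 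Keeping the use of Lemma~\ref{bin2}(2) aligned with the two displayed expansions there (the two signs) will require care.
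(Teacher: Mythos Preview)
Your proposal follows essentially the same architecture as the paper's proof: split into the three regimes $s\ge m$, $1<s<m$, $s=1$; straighten via Lemma~\ref{lem15} and Lemma~\ref{insertrows}; collapse the resulting sums by Vandermonde and Lemma~\ref{bin2}(2); and invoke Lemma~\ref{bin1}(2) with hypotheses (1)--(2). Your nonvanishing argument is exactly Lemma~\ref{nonvlemma2}.

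One point of imprecision worth correcting: you describe the coefficient of a fixed target $\phi_{T'}$ as a \emph{sum} $\binom{a+1}{1}+\binom{a+2}{2}+\cdots$ that vanishes mod $p$. That is not what happens. For each fixed $(s,t)$ the paper shows, after the bookkeeping you anticipate, that the coefficient of every semistandard $[T_D]$ in $\psi(e^{\la(s,t)})$ is the \emph{single} binomial $\tbinom{\la_s-\mu_{s+1}+t}{t}$ (Case~2, eq.~(\ref{700})) or $\tbinom{\la_s+t}{t}$ (Case~1). Lemma~\ref{bin1}(2) is then applied pointwise: the hypothesis $p^{l_p(c_s)}\mid \la_s-\mu_{s+1}+1$ makes each of $\tbinom{\la_s-\mu_{s+1}+1}{1},\dots,\tbinom{\la_s-\mu_{s+1}+c_s}{c_s}$ individually zero in $K$, and Remark~\ref{obv} ensures $t\le c_s$. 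The reduction to a single binomial is the real content of the ``bookkeeping'' you flag---it is achieved through repeated changes of variable (Steps~1--3 of Case~2, supported by Lemmas~\ref{lemma1}--\ref{lemma4}) that first eliminate $t_s$ via Lemma~\ref{bin2}(2), then the $(s{+}1)$-row data via another application of Lemma~\ref{bin2}(2), and finally $t_1,\dots,t_{s-1}$ via Vandermonde. Keep this target in mind rather than aiming for a sum of binomials to cancel.
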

	\begin{example}\label{exhom} Let us recall the partitions in Example \ref{rem14}(1). Let $p$ be a prime such that $p \ge 5$ and let \begin{align*}\lambda&=(4p, 3p-1, p-1, p-1, p-1, 4), \\ \mu &=(5p-1, 3p+1, 2p). \end{align*}
	We observed that $\lambda \in P(\mu)$. One checks that $\la \unlhd \mu$. With the notation of Theorem \ref{nonv1} we have \[c_1=p-1, \ c_2' =\min\{p+1, p-1\}=p-1. \] Thus $l_p(c_1)=l_p(c'_2)=1$. Also $\lambda_1-\mu_2+1=\lambda_2 - \mu_3 + 1=p$ are divisible by $p$, so that hypothesis (1) of Theorem \ref{nonv1} is satisfied. Since $p$ divides  $\lambda_3 +1 =p$ and $\lambda_4 +1=p$, hypothesis (2) is also satisfied. Hence the map $\psi$ of Definition \ref{defpsi} induces a nonzero element of $\Hom_G(\Delta(\lambda), \Delta(\mu))$. \end{example}
	\begin{remark}
	Suppose in Theorem \ref{nonv1} that $\mu$ is a partition of length at most 2. In this case the hypothesis $\lambda \in P(\mu)$ is equivalent to $\mu_2 \le \la_1$. Hence we recover \cite[Theorem 3.1]{MS3}. In fact this result was the main motivation for Theorem \ref{nonv1} and the consideration of pairs of partitions $\lambda, \mu$ satisfying $\lambda \in P(\mu)$ in the present paper.
	\end{remark}
	
\subsection{Proof of Theorem \ref{nonv1}}\label{s5.2}
From the presentation of the Weyl module $\Delta(\lambda)$ given in (\ref{pres1}), it follows that in order to show that the map $\psi : D(\lambda) \to \Delta(\mu)$ of Definition \ref{defpsi} induces a map $\Delta(\lambda) \to \Delta(\mu)$ of Weyl modules, it suffices to prove the following lemma.
	\begin{lemma}\label{nonvlemma}
		Under the assumptions of Theorem \ref{nonv1}, the composition \[ D(\la(s,t)) \xrightarrow{\square_{\lambda,s,t,}}D(\lambda) \xrightarrow{\psi} \Delta(\mu) \]
		is the zero map for all $s \in \{1, 2, \dots, n-1\}$ and $i \in \{1, 2, \dots, \lambda_{s+1}\}$, where the map $\square_{\lambda,s,t,}$ is defined in (\ref{boxmap}).
	\end{lemma}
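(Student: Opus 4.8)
The plan is to invoke the presentation (\ref{pres1}) of $\Delta(\la)$: the map $\psi$ of Definition \ref{defpsi} factors through $\Delta(\la)$ exactly when $\psi\circ\square_{\la,s,t}=0$ for every admissible pair $(s,t)$, which is the assertion of Lemma \ref{nonvlemma}. Since $D(\la(s,t))$ is a cyclic $G$-module generated by $e^{\la(s,t)}$ (\cite[Theorem A4]{SY}), it suffices to prove $\psi\bigl(\square_{\la,s,t}(e^{\la(s,t)})\bigr)=0$ in $\Delta(\mu)$. Writing $\psi=\sum_{T\in\st}\phi_T$ and applying Lemma \ref{comp2} to each summand --- in the sharper form of Remark \ref{remnewcond} when $s<m$, which is legitimate because every $A_T$ is upper triangular --- one expresses this element as an explicit $K$-linear combination $\sum\prod_i\binom{a_{is}+t_i}{t_i}\,[T(s,t;t_1,\dots,t_n)]$ of classes of row semistandard tableaux of shape $\mu$ and weight $\la(s,t)$.

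The second step is to rewrite this combination in the standard basis of Theorem \ref{standard}. When $1\le s\le m-1$ the failure of $T(s,t;t_1,\dots,t_n)$ to be semistandard comes only from the copies of $s$ pushed into row $s+1$ and, cascading upward, into the rows $1,\dots,s$; straightening by repeated use of Lemma \ref{lem15} on pairs of consecutive rows together with Lemma \ref{insertrows} yields a sum of classes of row semistandard tableaux, each having all its entries equal to $i$ confined to its first $i$ rows. Since $\la(s,t)\in P(\mu)$ by Lemma \ref{rem11}(3), Lemma \ref{lem12}(1) then guarantees that each of these tableaux is already semistandard. For $s\ge m$ the identical straightening is carried out on the (at most $m$) rows in which $s$ and $s+1$ occur. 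During this step Remark \ref{obv} restricts the ranges of the parameters: in every surviving standard tableau all of $1,\dots,s$ lie in the first $s$ rows, so $t\le c_s$ for $s\le m-2$ and $t\le c'_{m-1}$ for $s=m-1$, the bounds that match hypothesis (1), while for $s\ge m$ one simply has $t\le\la_{s+1}$.

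The decisive step is the collection: for each standard tableau $S_0\in\mathrm{SST}_{\la(s,t)}(\mu)$ that occurs, one sums the coefficients contributed by all data (original $T$, splitting $(t_i)$, straightening parameters) producing $[S_0]$ and shows the sum is $0$ in $K$. I expect the Vandermonde identity and the alternating identity of Lemma \ref{bin2} to collapse each such collected coefficient to a scalar multiple of a single product $\prod_j\binom{\la_j-\mu_{j+1}+i_j}{i_j}$, with $i_j\le c_j$ for $j\le m-2$, $i_{m-1}\le c'_{m-1}$, and $i_j\le\la_{j+1}$ for $j\ge m$; hypotheses (1) and (2) together with Lemma \ref{bin1}(2) then force every such product to vanish modulo $p$. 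For $m=2$ the whole argument specializes to the proof of \cite[Theorem 3.1]{MS3}.

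The main obstacle is exactly this last bookkeeping: arranging the iterated two-row straightening so that, after summing over all semistandard $T$ of weight $\la$ and over all intermediate parameters, the coefficient of each standard basis vector telescopes down to that single divisible product of binomial coefficients. What makes this tractable is the hypothesis $\la\in P(\mu)$: through Lemma \ref{lem12}(1) it forces every row semistandard tableau arising during the straightening to be automatically semistandard, so no further straightening relations --- and hence no additional, hard-to-control cancellations --- ever intervene.
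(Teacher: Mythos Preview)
Your overall strategy coincides with the paper's: reduce to the generator $e^{\la(s,t)}$, expand via Lemma \ref{comp2}/Remark \ref{remnewcond}, straighten the non-semistandard outputs with Lemma \ref{lem15} and Lemma \ref{insertrows}, use $\la(s,t)\in P(\mu)$ together with Lemma \ref{lem12}(1) to land in the standard basis, and then collect coefficients and kill them with Lemma \ref{bin1}(2). Two small corrections are in order. First, there is no ``cascading upward'': for $1<s<m$ the only violation in $T(s,t;t_1,\dots,t_{s+1},0,\dots,0)$ is the block $s^{(t_{s+1})}$ in row $s+1$, and a \emph{single} application of Lemma \ref{lem15} to rows $s,s+1$ already produces tableaux with every $i$ confined to the first $i$ rows, hence semistandard by Lemma \ref{lem12}(1); for $s\ge m$ no straightening is needed at all. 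Second, the coefficient in front of each surviving standard basis vector is not a product over $j$: the paper shows it is the \emph{single} binomial $\binom{\la_s-\mu_{s+1}+t}{t}$ when $1\le s\le m-1$ (with $\mu_{s+1}$ interpreted as $0$ giving $\binom{\la_s+t}{t}$ when $s\ge m$), independent of which standard tableau one is looking at.

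The genuine gap is the step you flag with ``I expect''. That collapsing is the heart of the argument and is far from formal. In Case 2 the paper needs three successive changes of variable (Steps 1--3) and four auxiliary bijection lemmas (Section \ref{ellemmas}) to arrange the sum so that Lemma \ref{bin2}(1) and both identities of Lemma \ref{bin2}(2) apply: one first eliminates $t_s$ via the alternating identity, then reparametrizes the $(s+1)$-st row of $A$ by the variables $q_j=a_{s+1,j}+k_j$ and sums out the $k_j$ via Vandermonde and the second alternating identity, and finally reparametrizes columns $s,s+1$ in rows $1,\dots,s-1$ by $u_i=a_{is}+t_i$ to make the remaining sum over $t_1,\dots,t_{s-1}$ collapse, again by Vandermonde, to $\binom{\la_s-\mu_{s+1}+t}{t}$. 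None of these reductions is automatic; each requires checking that the new index set is exactly a box so that Lemma \ref{bin2} applies, and this is precisely what Lemmas \ref{lemma1}--\ref{lemma4} establish. Your outline is correct, but to turn it into a proof you must actually carry out this bookkeeping; the paper's organization of it (isolating rows $s,s+1$ of $A$, then columns $s,s+1$) is a workable template.
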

	\begin{proof}The proof of this lemma is somewhat lengthy and takes up most of Section \ref{s5.2}. Fix $s \in \{1, 2, \dots, n-1\}$. We distinguish 3 cases that depend on the relative sizes of $s$ and the length $m$ of $\mu$. 
	
	In Section 5.3 below, we have gathered four elementary lemmas needed in the proof of Lemma \ref{nonvlemma}.
	
	\textbf{Case 1.} Suppose $m \le s \le n-1$. From Definition \ref{defpsi} and Corollary \ref{triang} we have $\psi = \sum_{A} \phi_{T_{A}}$, where the sum ranges over all $A \in T_{n}(\mathbb{N})(\la, \mu)$. Since each matrix $A$ is upper triangular, Lemma \ref{comp2} yields \begin{equation}\label{511}
		\psi(e^{\la(s,t)})=\sum_{A}\sum_{t_1,\dots,t_{m}}\tbinom{a_{1s}+t_1}{t_1}\tbinom{a_{2s}+t_2}{t_2} \cdots \tbinom{a_{ms}+t_m}{t_m}[T_A(s,t;t_1,\dots,t_{m},0,\dots,0)],\end{equation}
where the left sum is over all $A=(a_{ij}) \in T_{n}(\mathbb{N})(\la, \mu)$ and the right sum is over all  $t_1,\dots,t_{m} \in \mathbb{N}$ such that $t_{1}+\dots+t_{m}=t$, \; $a_{1,s+1}-t_1 \ge 0, \; \dots, \; a_{m,s+1}-t_{m} \ge 0$.

Each tableau $T=T_A(s,t;t_1,\dots,t_{m},0,\dots,0)$, where
\begin{equation}\label{514}T=\begin{matrix*}[l]
		1^{(a_{11})} \cdots s^{(a_{1s}+t_1)}(s+1)^{(a_{1,s+1}-t_1)} \cdots n^{(a_{1n})} \\
		2^{(a_{22})} \cdots s^{(a_{2s}+t_2)}(s+1)^{(a_{2,s+1}-t_2)} \cdots n^{(a_{2n})} \\
		\cdots \\ 
		m^{(a_{mm})}\cdots s^{(a_{ms}+t_m)}(s+1)^{(a_{m,s+1}-t_m)} \cdots n^{(a_{mn})}   \end{matrix*}\end{equation}
in the right hand side of (\ref{511}) is semistandard by Lemma \ref{lem12}(1). Fix such a $T$. Its coefficient in the right hand side of (\ref{511}) is equal to   \begin{equation}\label{5115}\sum_{B, u_1,\dots,u_{m}}\tbinom{a_{1s}+t_1}{u_1}\tbinom{a_{2s}+t_2}{u_2} \cdots \tbinom{a_{ms}+t_m}{u_m}
	\end{equation} where the sum is over all $B=(b_{ij}) \in T_{n}(\mathbb{N})(\la, \mu)$ and all  $u_1,\dots,u_m \in \mathbb{N}$ such that for all $i=1,\dots,m$ 
	\begin{equation}
		\label{5116}b_{ij}=a_{ij}, (j\neq s, s+1), \; b_{is}+u_i=a_{is}+t_i, \; b_{i,s+1}-u_i=a_{i,s+1}-t_i, \; u_1+ \dots +u_m=t.
	\end{equation}
From Lemma \ref{lemma1} below, it follows that (\ref{5115}) is equal to \[\sum_{ u_1+\dots+u_{m}=t}\tbinom{a_{1s}+t_1}{u_1}\tbinom{a_{2s}+t_2}{u_2} \cdots \tbinom{a_{ms}+t_m}{u_m},\] where the sum is over all  $u_1,\dots,u_m \in \mathbb{N}$ such that $u_1+\dots+u_m=t$ and $a_{is}+t_i \ge u_i$ for all $i=1,\dots,m$. By Lemma \ref{bin2}(1) this is equal to \[\tbinom{a_{1s}+t_1 + \dots +a_{ms}+t_m}{t} = \tbinom{\la_s+t}{t},\] which by Lemma \ref{bin1}(2) is zero in $K$ by assumption (2) of the Theorem \ref{nonv1}.

\textbf{Case 2.} Suppose $1<s<m$. This case is more involved. Let $A=(a_{ij}) \in T_{n}(\mathbb{N})(\la, \mu)$, let $t$ be an integer such that $1 \le t \le \la_{s+1}$ and consider the tableau $T_A \in \mathrm{SST}_{\la}(\mu)$. As in the first paragraph of the proof of Case 2 of Theorem \ref{thm13} (the only difference is that here we use Remark \ref{remnewcond} in place of Corollary \ref{corhom}) we obtain that $\phi_{T_{A}}(e^{\la(s,t)})$ is equal to 
\begin{align}\label{521}
	&\sum_{t_1,\dots,t_{s}}\prod_{i=1}^{s}\tbinom{a_{is}+t_i}{t_i}(-1)^{t-\tau_s} \sum_{k_{s+1},\dots,k_n}\tbinom{a_{s+1,s+1}-(t-\tau_s)+k_{s+1}}{k_{s+1}}\prod_{j=s+2}^{n}\tbinom{a_{s+1,j}+k_{j}}{k_{j}}\\\nonumber&[T(s,t;t_1,\dots,t-\tau_s,0,\dots,0)_{k_{s+1},\dots,k_n}],\end{align} 
where  $T(s,t;t_1,\dots,t-\tau_s,0,\dots,0)_{k_{s+1},\dots,k_n}$ is given by (\ref{eq1110}), \[\tau_s=t_1 + \dots + t_{s-1},\] the left sum is over all  $t_1,\dots,t_{s} \in \mathbb{N}$ subject to (\ref{newcond}) and $a_{ss}+t-\tau_{s-1} \le \mu_s$,
and the right sum is over all  $k_{s+1},\dots,k_{n} \in \mathbb{N}$ subject to
\begin{enumerate} \item[(Sa)] $k_{s+1}+\dots+k_n=t-\tau_s$, \item[(Sb)] $k_{s+1} \le a_{s,s+1}-t_s, \ k_{s+2} \le a_{s,s+2}, \ \dots, \ k_n \le a_{sn}.$\end{enumerate}

For clarity, we break up the proof into 3 steps.

\textit{Step 1.} Before we compute $\psi = \sum_{A \in T_{n}(\mathbb{N})(\la, \mu)}\phi_{T_A}$, let us simplify (\ref{521}); we want to compute the sum with respect to $t_s$.

For the sake of conciseness, define \[k=k_{s+2}+\dots+k_n.\] Then from (Sa) we have
	 $k=(t-\tau_{s})-k_{s+1}$.
Now we substitute in (\ref{521}) and rearrange the left sum to obtain
\begin{align*}
	&\sum_{t_1,\dots,t_{s-1}}\prod_{i=1}^{s-1}\tbinom{a_{is}+t_i}{t_i} \sum_{t_s}(-1)^{t-\tau_{s}}\tbinom{a_{ss}+t_s}{t_s} \sum_{k_{s+1},\dots,k_n}\tbinom{a_{s+1,s+1}-k}{t-\tau_s -k}\\\nonumber&\prod_{j=s+2}^{n}\tbinom{a_{s+1,j}+k_{j}}{k_{j}} [T(s,t;t_1,\dots,t-\tau_s,0,\dots,0)_{k_{s+1},\dots,k_n}]. \end{align*}
Note that in the above expression, we may drop $k_{s+1}$ from the right sum since $k_{s+1}=t-\tau_s -k$. Hence we may pass the $t_s$ from the middle sum to the right to get \begin{align}\label{522}
	&\sum_{t_1,\dots,t_{s-1}}\prod_{i=1}^{s-1}\tbinom{a_{is}+t_i}{t_i} \sum_{k_{s+2},\dots,k_n}\sum_{t_s}(-1)^{t-\tau_{s}}\tbinom{a_{s+1,s+1}-k}{t-\tau_s -k}\tbinom{a_{ss}+t_s}{t_s}\\\nonumber&\prod_{j=s+2}^{n}\tbinom{a_{s+1,j}+k_{j}}{k_{j}} [T(s,t;t_1,\dots,t-\tau_s,0,\dots,0)_{k_{s+1},\dots,k_n}], \end{align}
	where the left sum is over all  $t_1,\dots,t_{s-1} \in \mathbb{N}$ subject to \begin{enumerate} \item[(S$'$1a)] $t-\tau_{i-1}-\sum_{u=i+1}^{s+1}a_{u,s+1} \le t_i\le \min\{a_{i,s+1},t-\tau_{i-1}\}, \ i \le s-1$, \item[(S$'$1b)] $a_{ss}+t-\tau_{s-1} \le \mu_s$,\end{enumerate}
	the middle sum is over all  $k_{s+2},\dots,k_{n} \in \mathbb{N}$ subject to
	\begin{enumerate} \item[(S$'$2a)] $k_{i} \le a_{si}, \  i \ge s+2$, \item[(S$'$2b)] $t-\tau_{s-1}-a_{s,s+1} \le k \le a_{s+1s+1}$\end{enumerate} and the right sum is over all  $t_s\in \mathbb{N}$ subject to \begin{enumerate}\item[(S$'$3)] $t-\tau_{s-1}-a_{s+1,s+1} \le t_s \le \min\{a_{s,s+1}, t-\tau_{s-1}\}.$ \end{enumerate}
	Indeed, it is straightforward to verify that conditions (Sa) and (Sb) are equivalent to (S${'}$2a), $t-\tau_{s-1}-a_{s,s+1} \le k \le t - \tau_s$ and $k_{s+1}=t-\tau_s-k$. Furthermore, $t-\tau_s \le a_{s+1,s+1}$ and since $\tbinom{a_{s+1,s+1}-k}{t-\tau_s - k}=0$ for all $k$ such that $t-\tau_s < k \le a_{s+1,s+1}$, we obtain (S$'$2a) and (S$'$2b).
	
	The rows $s$ and $s+1$ of the tableau $T(s,t;t_1,\dots,t-\tau_s,0,\dots,0)_{k_{s+1},\dots,k_n}$ in the right hand side of (\ref{522}) are
	\[	\begin{matrix*}[l]
		s^{(a_{ss}+t-\tau_{s-1})}(s+1)^{(a_{s,s+1}-(t-\tau_{s-1}-k))} (s+2)^{(a_{s,s+2}-k_{s+2})}\cdots n^{(a_{sn}-k_n)} \\
		(s+1)^{(a_{s+1,s+1}-k)} (s+2)^{(a_{s+1,s+2}+k_{s+2})}\cdots n^{(a_{s+1,n}+k_n)}
		  \end{matrix*}\]
	and thus $T(s,t;t_1,\dots,t-\tau_s,0,\dots,0)_{k_{s+1},\dots,k_n}$ does not depend on $t_s$. Hence we may write (\ref{522}) as  follows
	\begin{align}\label{523}
		&\sum_{t_1,\dots,t_{s-1}}\prod_{i=1}^{s-1}\tbinom{a_{is}+t_i}{t_i}  \sum_{k_{s+2},\dots,k_n} \  \prod_{j=s+2}^{n}\tbinom{a_{s+1,j}+k_{j}}{k_{j}} \\\nonumber&\bigg(\sum_{t_s} (-1)^{t-\tau_{s}}\tbinom{a_{s+1,s+1}-k}{t-\tau_s -k}\tbinom{a_{ss}+t_s}{t_s} \bigg) [T(s,t;t_1,\dots,t-\tau_s,0,\dots,0)_{k_{s+1},\dots,k_n}]. \end{align}
		Now we claim that in the right sum of (\ref{523}) we may assume that $t_s \le t-\tau_{s-1}-k$. Indeed, let \[c=t-\tau_{s-1}-k.\] If $t_s > c$, then $t-\tau_s-k = t-\tau_{s-1} -t_s-k =c-t_s<0$ and thus $\tbinom{a_{s+1,s+1}-k}{t-\tau_s -k}=0.$ Next, from the first inequality of (S$'$2b) we have $c\le a_{s,s+1}$. Hence $c \le \min\{a_{ss+1}, t-\tau_{s-1}\}$. Thus we conclude from (S$'$3) that in the right sum of (\ref{523}), $t_s$ ranges from 0 to $c$. Note that $a_{s+1, s+1}-k \le  a_{s+1, s+1} \le \mu_{s+1} \le a_{ss}$, where the last inequality comes from Lemma \ref{lem12}(1).  Hence we may apply the first equality of Lemma \ref{bin2}(2) to conclude that the right sum is equal to \[(-1)^k \tbinom{a_{ss}-a_{s+1,s+1}+t-\tau_{s-1}}{t-\tau_{s-1}-k}.\] Thus far, we have shown that (\ref{521}) is equal to 
	\begin{align}\label{525}
		&\sum_{t_1,\dots,t_{s-1}}\prod_{i=1}^{s-1}\tbinom{a_{is}+t_i}{t_i}  \sum_{k_{s+2},\dots,k_n}(-1)^k \tbinom{a_{ss}-a_{s+1,s+1}+t-\tau_{s-1}}{t-\tau_{s-1}-k}\prod_{j=s+2}^{n}\tbinom{a_{s+1,j}+k_{j}}{k_{j}} \\\nonumber& [T(s,t;t_1,\dots,t-\tau_s,0,\dots,0)_{k_{s+1},\dots,k_n}], \end{align}
	where the left sum is subject to (S$'$1), the right sum is subject to (S$'$2) and \begin{align}\label{526} &T(s,t;t_1,\dots,t-\tau_s,0,\dots,0)_{k_{s+1},\dots,k_n} = \\\nonumber &
		\begin{matrix*}[l]
		1^{(a_{11})} \cdots s^{(a_{1s}+t_{1})}(s+1)^{(a_{1,s+1}-t_1)} \cdots n^{(a_{1n})} \\
		\vdots \\ 
		(s-1)^{(a_{s-1,s-1})}s^{(a_{s-1,s}+t_{s-1})}(s+1)^{(a_{s-1,s+1}-t_{s-1})} \cdots n^{(a_{s-1,n})} \\
		s^{(a_{ss}+t-\tau_{s-1})}(s+1)^{(a_{s,s+1}-(t-\tau_{s-1}-k))} (s+2)^{(a_{s,s+2}-k_{s+2})}\cdots n^{(a_{sn}-k_n)} \\
		(s+1)^{(a_{s+1,s+1}-k)} (s+2)^{(a_{s+1,s+2}+k_{s+2})}\cdots n^{(a_{s+1,n}+k_n)}\\
		(s+2)^{(a_{s+2,s+2})}\cdots n^{(a_{s+2,n})} \\
		\vdots\\
		m^{(a_{mm})} \cdots n^{(a_{mn})}.  \end{matrix*}
\end{align}

\textit{Step 2}. Next,  we start the computation of $\psi(e^{\lambda(s,t)})$. As in the previous step, we will simplify the sums using change of variable arguments and Lemma \ref{bin2}.

 We have seen that $\psi = \sum_{A}\phi_{T_A}$, where the sum is over all $A=(a_{ij}) \in T_{n}(\mathbb{N})(\la, \mu)$. \\
 Claim. We may write the sum $\sum_{A}\phi_{T_A}$ as follows (isolating the $s+1$ row of $A$ and `forgetting' the $s$ row), \[\psi = \sum_{a_{ij}: i \notin \{s, s+1\}} \  \   \sum_{a_{s+1,s+1},\dots,a_{s+1,n}}\phi_{T_A},\] where the left sum is over all  $a_{ij} \in \mathbb{N}$, $1 \le i \le j \le n, i\neq s,s+1$, such that
\begin{enumerate}
	\item[(S$'$4a)] $\sum_{j=i}^{n} a_{ij}=\mu_i, \  i \neq s,s+1$, \  $\sum_{i=1}^{j} a_{ij}=\lambda_j, \  j \le s-1$,
	\item[(S$'$4b)] $ \sum_{i=1, i \neq s,s+1}^{j}a_{ij} \le \lambda_j, \  j \ge s$,
\end{enumerate}	
and the right sum is over all  $a_{s+1,s+1},\dots,a_{s+1,n} \in \mathbb{N}$, such that
\begin{enumerate}
	\item[(S$'$5a)] $\sum_{j=s+1}^{n} a_{s+1,j}=\mu_{s+1}$,
	\item[(S$'$5b)] $ a_{s+1,j} \le \lambda _j - \sum_{i=1, i \neq s,s+1}^{j}a_{ij}, \  j \ge s+1$.
\end{enumerate}	
Proof of Claim. First, suppose $A=(a_{ij}) \in T_{n}(\mathbb{N})(\la, \mu)$. Since $A$ is upper triangular with sequence of column sums equal to $\lambda$, we obtain the second equality of (S$'$4a), (S$'$4b) and (S$'$5b). Similarly, since  $A$ is upper triangular with sequence of row sums equal to $\mu$, we obtain the first equality of (S$'$4) and (S$'$5a). Conversely, suppose \begin{itemize} \item we have $a_{ij} \in \mathbb{N}$, $1 \le i \le j \le n, i\neq s,s+1$ satisfying (S$'$4), and \item we have $a_{s+1,s+1},\dots,a_{s+1,n} \in \mathbb{N}$ satisfying  (S$'$5).\end{itemize} We define  \begin{equation}\label{ss}a_{sj}=\lambda_j- \sum_{i=1, i \neq s}^{j}a_{ij}, \  j=s,\dots,n, \end{equation} and \[a_{ij}=0, i>j.\]Then the $n \times n$ matrix $(a_{ij})$ is in $T_n(\mathbb{N})(\lambda, \mu)$. Furthermore, the $i$ row of $(a_{ij})$, for $i \neq s,s+1$ is $(0 \  \cdots 0 \   a_{ii}\  \cdots a_{in})$ and the $s+1$ row of $(a_{ij})$ is $0 \  \cdots 0 \  a_{ss} \  \cdots a_{s+1,n}.$ Finally, $(a_{ij})$ is the unique matrix in $T_n(\mathbb{N})(\lambda, \mu)$ with these rows in position $1, \dots s-1, s+1 \dots n$. The proof of the Claim  is complete.

Now using (\ref{525}) and swapping the sums $\sum_{a_{s+1,s+1},\dots,a_{s+1,n}}$ and  $\sum_{t_1,\dots,t_{s-1}}$ (which is permissible since, from (S$'$1) and (\ref{ss}) (for $j=s+1$), we can see that $t_1,\dots,t_{s-1}$ are independent of $a_{s+1,s+1},\dots,a_{s+1,n}$), we obtain
	\begin{align}\label{600}
	\psi(e^{\lambda(s,t)})= 	&\sum_{a_{ij}: i \notin \{s, s+1\}} \  \sum_{t_1,\dots,t_{s-1}}\prod_{i=1}^{s-1}\tbinom{a_{is}+t_i}{t_i} \\ \nonumber & \sum_{\substack {a_{s+1,s+1},\dots,a_{s+1,n}, \\ k_{s+2},\dots,k_n }  } \   (-1)^k \tbinom{a_{ss}-a_{s+1,s+1}+t-\tau_{s-1}}{t-\tau_{s-1}-k}\prod_{j=s+2}^{n}\tbinom{a_{s+1,j}+k_{j}}{k_{j}} \\\nonumber& [T(s,t;t_1,\dots,t-\tau_s,0,\dots,0)_{k_{s+1},\dots,k_n}], \end{align}
	where the right sum is subject to (S$'$2) and (S$'$5). 
	
	Before we make substitutions after changing variables, let us consider the right sum in (\ref{600}). Let $I$ be the set of all sequences \[(a_{s+1,s+1},\dots,a_{s+1,n},k_{s+2},\dots,k_{n})\] of nonnegative integers satisfying (S$'$2) and (S$'$5). If $q_{s+2},..,q_n \in \mathbb{N}$ satisfy \begin{equation}\label{qin}q_j \le \lambda_j - \sum_{i=1, i \neq s,s+1}^{j}a_{ij}, \  j \ge s+2,\end{equation}
	define the set $I_{q_{s+2},\dots,q_n}$ consisting of all \[(a_{s+1,s+1},\dots,a_{s+1,n},k_{s+2},\dots,k_{n}) \in I\] such that \begin{equation}\label{qsum}a_{s+1,j}+k_j=q_j, j \ge s+2.\end{equation} Then we have the disjoint union \[I= \cup_{q_{s+2},\dots,q_n}I_{q_{s+2},\dots,q_n}.\] From the definitions we have that, given $q_{s+2},\dots,q_n$ as in (\ref{qin}), then\\ $(a_{s+1,s+1},\dots,a_{s+1,n},k_{s+2},\dots,k_{n}) \in I_{q_{s+2},\dots,q_n}$ if and only if (S$'$2), (S$'$5) and (\ref{qsum}) hold. Hence we may apply Lemma \ref{lemma2} below to conclude the following. Suppose $q_{s+2},\dots,q_n$ satisfy (\ref{qin}). Then 
		\[(a_{s+1,s+1},\dots,a_{s+1,n},k_{s+2},\dots,k_{n}) \in I_{q_{s+2},\dots,q_n}\] if and only if (S$'$5b), (S$'$6) and (\ref{qsum}) hold, where
	\begin{enumerate}
		\item[(S$'$6a)]$a_{s+1,s+1}=\mu_{s+1}-q+k$,
		\item[(S$'$6b)]$t-\tau_{s-1}-\lambda_{s+1}+\sum_{i=1}^{s-1}a_{i,s+1}+\mu_{s+1} \le q \le \mu_{s+1}$,
	\end{enumerate}
	and $q=q_{s+2}+\dots+q_n$.
	
	We also note that $I_{q_{s+2},\dots,q_n}$ is nonempty as it contains $(\mu_{s+1}-q,q_{s+2}, \dots, q_n, 0, \dots,0)$.
	
		Now with the substitutions \[q_j=a_{s+1,j}+k_j, \  (j \ge s+2),\] we see that rows $s$ and $s+1$ of the tableau $T(s,t;t_1,\dots,t-\tau_s,0,\dots,0)_{k_{s+1},\dots,k_n}$ in the right hand side of (\ref{600}), which is given in (\ref{526}), are
	\begin{align}\label{ss+1}
		&s^{(\lambda_s -\sum_{i=1}^{s-1}a_{is}+t-\tau_{s-1})}        (s+1)^{(\lambda_{s+1}-\mu_{s+1}-\sum_{i=1}^{s-1}a_{i,s+1}-(t-\tau_{s-1})+q)}  \\ \nonumber    
		&\;\;\;\;\;\;\;\;\;(s+2)^{(\lambda_{s+2}-\sum_{i=1, i \neq s,s+1}^{n}a_{i,s+2}-q_{s+2})}\cdots n^{(\lambda_{n}-\sum_{i=1, i \neq s,s+1}^{n}a_{in}-q_{n})} \\ \nonumber
		&(s+1)^{(\mu_{s+1}-q)}   (s+2)^{(q_{s+2})}\cdots  n^{(q_{n})}.
	\end{align} From (S$'$6a) it follows that $\tbinom{a_{ss}-a_{s+1,s+1}+t-\tau_{s-1}}{t-\tau_{s-1}-k}=\tbinom{a_{ss}-\mu_{s+1}+q-k+t-\tau_{s-1}}{t-\tau_{s-1}-k}.$ The point is we have expressed (\ref{526}) independently of the $a_{s+1,s+1},\dots,a_{s+1,n},k_{s+2},\dots,$ $k_n$. Using this and the conclusion of the previous paragraph,  we may rewrite the right sum in (\ref{600}) as follows, 
		\begin{align}\label{611}
		& \sum_{q_{s+2},\dots,q_n} \  \bigg(\sum_{\substack {a_{s+1,s+1},\dots,a_{s+,1n}, \\ k_{s+2},\dots,k_n }  }  (-1)^k \tbinom{a_{ss}-\mu_{s+1}+q-k+t-\tau_{s-1}}{t-\tau_{s-1}-k}\prod_{j=s+2}^{n}\tbinom{q_{j}}{k_{j}}\bigg) \\\nonumber& [T(s,t;t_1,\dots,t-\tau_s,0,\dots,0)_{k_{s+1},\dots,k_n}], \end{align}
	where the left sum is over all $q_{s+2},\dots,q_n \in \mathbb{N}$ such that (S$'$6b) and (\ref{qin}) hold, and the right sum is over all $a_{s+1,s+1},\dots,a_{s+1,n},k_{s+2},\dots,k_n \in \mathbb{N}$ such that (S$'$5b), (S$'$6a) and (\ref{qsum}) hold. Then by Lemma \ref{lemma3} below, the right sum in (\ref{611}) is equal to 
	\begin{equation}\label{sum1}\sum_{ k_{s+2},\dots,k_n }  (-1)^k \tbinom{a_{ss}-\mu_{s+1}+q-k+t-\tau_{s-1}}{t-\tau_{s-1}-k}\prod_{j=s+2}^{n}\tbinom{q_{j}}{k_{j}}\end{equation}
where the sum is over all $k_{s+2},\dots,k_n \in \mathbb{N}$ such that 
$k_j \le q_j  \  (j\ge s+2)$ and $k \le \lambda_{s+1}-\sum_{i=1}^{s-1}a_{i,s+1}-\mu_{s+1}+q.$ We observe that from (S$'$1a) and (S$'$6b) we have \[ 0 \le t-\tau_{s-1} \le \lambda_{s+1}-\sum_{i=1}^{s-1}a_{i,s+1}-\mu_{s+1} + q.\] Moreover, by our convention $\tbinom{a_{ss}-\mu_{s+1}+q-k+t-\tau_{s-1}}{t-\tau_{s-1}-k}=0$ for all $k$ such that 
$t-\tau_{s-1} <k$. Thus the sum in (\ref{sum1}), say $\Sigma$, is over all $k_{s+2},..,k_n \in \mathbb{N}$ such that $k_j \le q_j  \  (j \ge s+2)$ and $k \le t-\tau_{s-1}.$ Therefore \begin{align}
	\Sigma &= \sum_{k=0}^{t-\tau_{s-1}}(-1)^k\tbinom{a_{ss}-\mu_{s+1}+q-k+t-\tau_{s-1}}{t-\tau_{s-1}-k}\sum_{ k_{s+2}+\dots+k_n=k } \  \prod_{j=s+2}^{n}\tbinom{q_{j}}{k_{j}} \\\nonumber &=\sum_{k=0}^{t-\tau_{s-1}}(-1)^k\tbinom{a_{ss}-\mu_{s+1}+q-k+t-\tau_{s-1}}{t-\tau_{s-1}-k}\tbinom{q}{k}
\end{align}	
by Lemma \ref{bin2}(1). From Lemma \ref{lem12}(1) we have $a_{ss}-\mu_{s+1} \ge 0$, so by letting $a=a_{ss}-\mu_{s+1}+q$ and $b=q$, we have $a \ge b \ge 0$. From the second identity of Lemma \ref{bin2}(2) we conclude that $
\Sigma = \tbinom{a_{ss}-\m_{s+1}+t-\tau_{s-1}}{t-\tau_{s-1}}$. 

To summarize, we have
\begin{align}\label{620}
	\psi(e^{\lambda(s,t)})= 	&\sum_{a_{ij}: i \notin \{s, s+1\}} \  \sum_{t_1,\dots,t_{s-1}} \  \sum_{q_{s+2},\dots,q_{n}}     \tbinom{a_{ss}-\m_{s+1}+t-\tau_{s-1}}{t-\tau_{s-1}}\prod_{i=1}^{s-1}\tbinom{a_{is}+t_i}{t_i} \\\nonumber& [T(s,t;t_1,\dots,t-\tau_s,0,\dots,0)_{(q_{s+2},\dots,q_n)}], \end{align}	
where the left sum is over (S$'$4), the middle sum is over (S$'$1a) and (S$'$1b) with $a_{ss}$ replaced by $\lambda_{s}-\sum_{i=1, i\neq s}^{n}a_{is}$, and the right sum is over (S$'$6b) and (\ref{qin}). Moreover, all rows of $T(s,t;t_1,\dots,t-\tau_s,0,\dots,0)_{(q_{s+2},\dots,q_n)}$, except rows $s$ and $s+1$, are equal to the corresponding rows of (\ref{526}), while rows $s$ and $s+1$ are given by (\ref{ss+1}).
		
\textit{Step 3}. We conclude the computation of $\psi(e^{\lambda(s,t)})$ with another change of variable argument.

Let us begin by rewriting the first sum in (\ref{620}) by isolating columns $s, s+1$. We see that $\psi(e^{\lambda (s,t)})$ is equal to 
\begin{align}\label{621}
	&\sum_{a_{ij}: i,j \notin \{s, s+1\}} \  \sum_{\substack{t_1,\dots,t_{s-1}\\a_{1s},\dots,a_{s-1,s+1}}} \  \sum_{q_{s+2},\dots,q_{n}}     \tbinom{a_{ss}-\m_{s+1}+t-\tau_{s-1}}{t-\tau_{s-1}}\prod_{i=1}^{s-1}\tbinom{a_{is}+t_i}{t_i} \\\nonumber& [T(s,t;t_1,\dots,t -\tau_{s},0,\dots,0)_{(q_{s+2},\dots,q_n)}], \end{align}
 where the left sum is over all $a_{ij} \in \mathbb{N}$, where $1 \le i \le j \le n$ and $i,j \notin \{s,s+1\}$, such that
\begin{enumerate}
	\item[(S$''$1a)] $\sum_{j=i, j \neq s, s+1}^{n} a_{ij} \le\mu_i, \  i \le s-1,$ and $\sum_{j=i}^{n} a_{ij}=\mu_i, \  i\ge s+2$,
	\item[(S$''$1b)]  $\sum_{i=1}^{j} a_{ij} = \lambda_j, \  j \le s-1,$ and $\sum_{i=1, i \neq s, s+1}^{j} a_{ij} \le \lambda_j, \  j \ge s+2,$
\end{enumerate}	
and the middle sum is over all  $t_1,\dots,t_{s-1} \in \mathbb{N} $ and $a_{1s},a_{1,s+1},\dots,a_{s-1,s} , a_{s-1,s+1} \in \mathbb{N}$, such that (S$'$1a) and (S$'$1b) hold and 
\begin{enumerate}
	\item[(S$''$2a)] $\sum_{j=i}^{n} a_{ij} = \mu_i, i \le s-1,$
	\item[(S$''$2b)]  $\sum_{i=1}^{s-1} a_{is} \le \lambda_s$ and $\sum_{i=1}^{s-1} a_{i,s+1} \le \lambda_{s+1}.$
\end{enumerate}	
Our goal in Step 3 is to compute the coefficient of $[T(s,t;t_1,\dots,t -\tau_{s},0,\dots,$ $0)_{(q_{s+2},\dots,q_n)}]$. Define 	\begin{center}
	$u_i=a_{is}+t_i$ ($i=1,\dots,s-1$) and $u=u_1+\dots+u_{s-1}$.
\end{center} Then it is easy to verify by direct substitution that \[T(s,t;t_1,\dots,t-\tau_s,0,\dots,0)_{(q_{s+2},\dots,q_n)}=T_D,\] where $D=(d_{ij}) \in T_{n}(\mathbb{N})(\la, \mu)$ is defined by 	\begin{align}	
	d_{is}&=u_i, \  d_{is+1}=\mu_i-\sum_{j=i, j\neq s,s+1}^na_{ij}-u_i, \  i \le s-1, \\\nonumber
	d_{ss}&=\lambda_s-t+u, \\\nonumber d_{s,s+1}&=\lambda_{s+1} -\mu_{s+1}-t+q-\sum_{i=1}^{s-1}\mu_i+\sum_{i=1}^{s-1} \  \sum_{j=i, j \neq s,s+1}^{n} a_{ij}+u,  \\\nonumber
	d_{sj}&=\lambda_j-\sum_{i=1, i\neq s,s+1}^{j} a_{ij}-q_j, \ j \ge s+2,\\\nonumber
	d_{s+1,s+1}&=\mu_{s+1}-q, \  d_{s+1,j}=q_j, \  j \ge s+1,\\\nonumber
d_{ij}&=a_{ij}, \  \text{otherwise}.
	\end{align}		
We have expressed $T_D$ independently of the $t_1,\dots,t_{s-1}$ and $a_{1s},a_{1,s+1},\dots,a_{s-1,s},$ $ a_{s-1,s+1}$. Moreover, we have \[u=\sum_{i=1}^{s-1}a_{is} +\sum_{i=1}^{s-1}t_i=\sum_{i=1}^{s-1}a_{is} +t-\tau_{s-1}\] and hence from (\ref{ss}) we conclude that $\tbinom{a_{ss}-\m_{s+1}+t-\tau_{s-1}}{t-\tau_{s-1}}=\tbinom{\lambda_s -\mu_{s+1}+t-u}{t-\tau_{s-1}}$. We observe that the left hand side of (S$'$6b) is equal to \[\mu_{s+1}-\lambda_{s+1}+t+\sum_{i=1}^{s-1}\mu_i -\sum_{i=1}^{s-1}\  \sum_{j=i, j \neq s, s+1}^{n}a_{ij} -u,\] that is, it is independent of the subscript $s+1$ of $a_{i,s+1}$. Hence from (S$'$1a), (S$''$2) and (S$'$6) we may swap the middle and right sums in (\ref{621}). These remarks allow us to obtain
\begin{equation}\label{631}
	\sum_{a_{ij}: i,j \notin \{s, s+1\}} \ \sum_{u_1,\dots,u_{s-1}} \  \sum_{q_{s+2},\dots,q_{n}} \bigg( \sum_{\substack{t_1,\dots,t_{s-1}\\a_{1s},\dots,a_{s-1,s+1}}}      \tbinom{\lambda_s -\mu_{s+1}+t-u}{t-\tau_{s-1}}\prod_{i=1}^{s-1}\tbinom{u_i}{t_i} \bigg) [T_D], \end{equation}
where the second sum is over all $u_1,\dots,u_{s-1} \in \mathbb{N}$ such that, given $a_{ij} \  (i,j \notin \{s, s+1\})$ as in the first sum,  there exist $t_1,..,t_{s-1} \in \mathbb{N}$ and $ a_{1s}, a_{1,s+1},\dots,a_{s-1,s}, a_{s-1,s-1} \in \mathbb{N}$ satisfying (S$''$2), (S$'$1a), (S$'$1b) with $a_{ss}$ replaced by $\lambda_s- \sum_{i=1, i \neq s}^{n}a_{is}$, and
\begin{enumerate}
	\item[(S$''$3)] $u_i=a_{is}+t_i$, \  $i=1,\dots,s-1$.
\end{enumerate}
The fourth sum is over (S$''$2a), (S$''$2b), (S$'$1a) and (S$'$1b). However, condition (S$'$1b) is equivalent to $t \le \mu_s -\lambda_s +u$. Thus we may impose this condition on the second sum in (\ref{631}) and not on the fourth.

By Lemma \ref{lemma4} below,  (\ref{631}) may be written as follows
	\begin{align}\label{650}
		&\sum_{a_{ij}: i,j \notin \{s, s+1\}} \  \sum_{u_1,\dots,u_{s-1}} \  \sum_{q_{s+2},\dots,q_{n}} \bigg( \sum_{t_1,\dots,t_{s-1}}   \tbinom{\lambda_s -\mu_{s+1}+t-u}{t-\tau_{s-1}}\prod_{i=1}^{s-1}\tbinom{u_i}{t_i}\bigg) [T_D], \end{align}	
where the fourth sum is over all $t_1,\dots,t_{s-1} \in \mathbb{N}$	such that
\begin{enumerate} \item[(S$''$4)] $\tau_{s-1} \le t$,\  $0 \le u- \tau_{s-1} \le \lambda_{s}$ and $t_i \le u_i, \ i=1,\dots,s-1.$\end{enumerate}
We claim that 	$\lambda_s -\mu_{s+1}+t-u \ge 0$. Indeed, at the end of Step 2 we noticed that $a_{ss}-\m_{s+1}+t-\tau_{s-1} \ge 0$. We also noticed that under the change of variables in Step 3, $a_{ss}-\m_{s+1}+t-\tau_{s-1}=\lambda_s -\mu_{s+1}+t-u$, so the claim  follows. We also note that we may drop the second condition in (S$''$4) (this follows from the third condition of (S$''$4) and the definition of $u$).  Hence we apply Lemma \ref{bin2}(1) to conclude that (\ref{650}) is equal to 
\begin{align}\label{700}
	&\sum_{a_{ij}: i,j \notin \{s, s+1\}} \  \sum_{u_1,\dots,u_{s-1}} \  \sum_{q_{s+2},\dots,q_{n}}   \tbinom{\lambda_s -\mu_{s+1}+t}{t}[T_D]. \end{align}				
Now from (\ref{526}) we have that the number of appearances of the elements $1,\dots,s-1,s$ in $T_D$ is $\lambda_1+\dots+\lambda_{s-1} +(\lambda_{s}+t)$ and these appear in the first $s$ rows of $T_D$. Hence $\lambda_1+\dots+\lambda_{s-1} +(\lambda_{s}+t) \le \mu_1+\dots+\mu_s$, that is $t \le c_s$. On the other hand we have $t \le \lambda_{s+1}$ and thus $t \le \min\{c_s, \lambda_{s+1}\}$ for all $1<s<m$. From Lemma \ref{rem11}(1), $\min\{c_s, \lambda_{s+1}\}=c_s$ if $s<m-1$. Now from assumption (1) of the theorem and Lemma \ref{bin1}(2), it follows that (\ref{700}) is equal to $0$.
				
	\textbf{Case 3.} Suppose $s=1$.	This is essentially identical to the first part of the proof of \cite[Theorem 3.1]{MS3} and thus omitted. The only difference is that we append rows $3,\dots,m$ of $T_A$ to the two-rowed tableaux that appear in the proof of loc.cit. according to Lemma \ref{insertrows}.
	
	The proof of Lemma \ref{nonvlemma} will be complete once we prove the four lemmas of Section 5.3 below. \end{proof}
	
	From Lemma \ref{nonvlemma} we have that $\psi$ induces a $G$-homomorphism \[\bar{\psi} : \Delta(\lambda) \to \Delta(\mu).\] It remains to be shown that $\bar{\psi} \neq 0$. Recall that we are assuming $\lambda \unlhd \mu$ in the statement of Theorem \ref{nonv1}.
	
	\begin{lemma}\label{nonvlemma2}
	The map $\bar{\psi} : \Delta(\lambda) \to \Delta(\mu)$ is nonzero.
	\end{lemma}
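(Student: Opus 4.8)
The plan is to exhibit a single element of $\Delta(\la)$ on which $\bar{\psi}$ does not vanish, namely the image $[e^{\la}] = \pi_{\Delta(\la)}(e^{\la})$ of the cyclic generator $e^{\la} = 1^{(\la_1)} \otimes \cdots \otimes n^{(\la_n)}$ of $D(\la)$. Recall that $\bar{\psi}$ is obtained from $\psi$ via Lemma \ref{nonvlemma} and the presentation (\ref{pres1}), so that $\bar{\psi} \circ \pi_{\Delta(\la)} = \psi$. Using that $\phi_T(e^{\la}) = [T]$ for every $T \in \mathrm{SST}_{\la}(\mu)$ (Remark \ref{remarkphi}) together with Definition \ref{defpsi}, I would compute
\[ \bar{\psi}([e^{\la}]) = \psi(e^{\la}) = \sum_{T \in \mathrm{SST}_{\la}(\mu)} \phi_T(e^{\la}) = \sum_{T \in \mathrm{SST}_{\la}(\mu)} [T] \in \Delta(\mu). \]

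Next I would invoke Theorem \ref{standard}: the elements $[T]$, as $T$ ranges over $\mathrm{SST}(\mu)$, form a $K$-basis of $\Delta(\mu)$. In particular the $[T]$ with $T \in \mathrm{SST}_{\la}(\mu)$ are pairwise distinct basis vectors, so their sum $\bar{\psi}([e^{\la}])$ is nonzero as soon as the index set $\mathrm{SST}_{\la}(\mu)$ is nonempty. Finally, nonemptiness of $\mathrm{SST}_{\la}(\mu)$ is exactly guaranteed by the running hypothesis $\la \unlhd \mu$: the cardinality of $\mathrm{SST}_{\la}(\mu)$ is the Kostka number $K_{\mu\la}$ (equivalently $\dim \Delta(\mu)_{\la}$, by the Proposition of \cite{AB} recalled in Subsection \ref{s2.3}), and it is classical that $K_{\mu\la} > 0$ if and only if $\mu$ dominates $\la$, i.e.\ $\la \unlhd \mu$. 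If one wishes to avoid citing this, one can build a semistandard filling of $\mu$ with content $\la$ directly, placing $1$'s, then $2$'s, and so on greedily along the rows; the partial-sum inequalities defining $\la \unlhd \mu$ are precisely what prevents a forced column repetition. In either case $\mathrm{SST}_{\la}(\mu) \ne \emptyset$, hence $\bar{\psi}([e^{\la}]) \ne 0$, and therefore $\bar{\psi} \ne 0$.

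I do not anticipate a genuine obstacle. All of the real work behind Theorem \ref{nonv1} sits in Lemma \ref{nonvlemma} — the assertion that $\psi$ annihilates $\im \square_{\la}$ and hence descends to $\Delta(\la)$ — which has already been carried out. What remains is the soft observation that the descended map is detected on $[e^{\la}]$, where it equals a nonempty sum of distinct standard basis vectors of $\Delta(\mu)$ and so cannot vanish; the only ingredient beyond results already established in the excerpt is the classical combinatorial fact linking nonemptiness of $\mathrm{SST}_{\la}(\mu)$ with the dominance order on partitions.
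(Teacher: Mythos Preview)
Your proof is correct and follows essentially the same approach as the paper: evaluate on the cyclic generator, use Remark \ref{remarkphi} to get $\sum_{T\in\mathrm{SST}_{\la}(\mu)}[T]$, invoke Theorem \ref{standard} to see these are distinct basis vectors, and appeal to $\la\unlhd\mu$ for nonemptiness of $\mathrm{SST}_{\la}(\mu)$ (the paper cites \cite[Theorem 1$''$]{Lam} for this last step). If anything, you are slightly more careful than the paper in distinguishing $\bar{\psi}([e^{\la}])$ from $\psi(e^{\la})$.
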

	\begin{proof}From Remark \ref{remarkphi} we have that \[\bar{\psi}(1^{(\lambda_1)} \otimes \dots \otimes n^{(\lambda_n)})=\sum_{T \in \mathrm{SST}_{\la}(\mu)}[T],\] which is a sum of certain distinct basis elements of $\Delta{(\mu)}$ according to Theorem \ref{standard}. So it suffices to show that the set $\mathrm{SST}_{\la}(\mu)$ is nonempty. This is indeed the case since $\lambda \unlhd \mu$,  see \cite[Theorem 1'']{Lam}. \end{proof}
	
	The proof of Theorem \ref{nonv1} will be complete once we prove the next four lemmas.
	
	\subsection{Four elementary lemmas}\label{ellemmas}We prove here the four  lemmas that were used in the proof of Lemma \ref{nonvlemma}. 
	
	Recall that we have partitions $\lambda, \mu \in \Lambda^+(n,r)$ and integers $s,t$ satisfying $1 \le s < m$ and $1 \le t \le \lambda_{s+1}$, where $m=\ell(\mu)$. 

	\subsubsection{The first lemma}Let us recall the setup of Case 1 of the proof of Theorem \ref{nonv1}. Suppose $m \le s\le n-1$. We have a fixed tableau \begin{equation}\label{lem611}T=\begin{matrix*}[l]
			1^{(a_{11})} \cdots s^{(x_{1s})}(s+1)^{(y_{1,s+1})} \cdots n^{(a_{1n})} \\
			2^{(a_{22})} \cdots s^{(x_{2s})}(s+1)^{(y_{2,s+1})} \cdots n^{(a_{2n})} \\
			\cdots \\ 
			m^{(a_{mm})}\cdots s^{(x_{ms})}(s+1)^{(y_{m,s+1})} \cdots n^{(a_{mn})}   \end{matrix*}\end{equation} given by eq. (\ref{514}), where $(a_{ij}) \in T_n(\mathbb{N})(\la, \mu)$, $x_{is}=a_{is}+t_i$ and $y_{i,s+1}=a_{i,s+1}-t_i$. For the matrix $A_T$ of $T$ we have \begin{equation}\label{lem612}
		A_T \in T_n(\mathbb{N})(\la(s,t), \mu).
	\end{equation}
	
		Let $X$ be the set of all ordered $m+1$ tuples \[(B, u_1, \dots u_m),\] where $B=(b_{ij}) \in T_{n}(\mathbb{N})(\la, \mu)$ and  $u_1,\dots,u_m \in \mathbb{N}$ satisfy for all $i=1,\dots,m$ the following conditions
		\begin{equation}
			\label{lem613}b_{ij}=a_{ij}, (j\neq s, s+1), \  b_{is}+u_i=x_{is}, \  b_{i,s+1}-u_i=y_{i,s+1}, \  u_1+ \dots +u_m=t.
		\end{equation}
		
		Let $X'$ be the set of all ordered $m$ tuples \[(u_1, \dots, u_m),\] of nonnegative integers such that \begin{equation} \label{lem614} 
			x_{is} \ge u_i  \  (i=1, \dots, m), \  u_1 + \dots +u_m = t.
		\end{equation}
		\begin{lemma}\label{lemma1}
			The map $X \to X', (B,u_1,\dots,u_m) \to (u_1,\dots u_m),$ is a bijection.
		\end{lemma}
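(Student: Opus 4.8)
The plan is to construct an explicit two-sided inverse. First I would note that the relations (\ref{lem613}) determine $B$ completely from the tuple $(u_1,\dots,u_m)$: for $j\neq s,s+1$ we are forced to take $b_{ij}=a_{ij}$, while $b_{is}=x_{is}-u_i$ and $b_{i,s+1}=y_{i,s+1}+u_i$ for $1\le i\le m$ (and the remaining entries of $B$ vanish, since $B$ has row sums $\mu$). Consequently the forgetful map $X\to X'$ is injective, and the whole statement reduces to showing that, conversely, for every $(u_1,\dots,u_m)\in X'$ the matrix $B=(b_{ij})$ written down by these same formulas lies in $T_n(\mathbb{N})(\la,\mu)$; once that is checked, the pair $(B,u_1,\dots,u_m)$ automatically satisfies (\ref{lem613}) and maps back to $(u_1,\dots,u_m)$, so the two maps are mutually inverse.

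Next I would verify the four membership conditions for $B\in T_n(\mathbb{N})(\la,\mu)$. Nonnegativity of $b_{is}=x_{is}-u_i$ is precisely the inequality $x_{is}\ge u_i$ from (\ref{lem614}), and nonnegativity of $b_{i,s+1}=y_{i,s+1}+u_i$ follows from $y_{i,s+1}\ge 0$ --- which holds because $y_{i,s+1}$ is already an entry of $A_T$, and $A_T\in T_n(\mathbb{N})(\la(s,t),\mu)$ by (\ref{lem612}). Upper-triangularity is inherited from $(a_{ij})$ in the columns $j\neq s,s+1$, while the only altered entries sit in columns $s$ and $s+1$ in rows $1,\dots,m$, all of which have row index $\le m\le s$, hence on or above the diagonal. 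For the row sums, fix $i\le m$ and compute
\[
\sum_j b_{ij}=\sum_{j\neq s,s+1}a_{ij}+(x_{is}-u_i)+(y_{i,s+1}+u_i)=\sum_{j\neq s,s+1}a_{ij}+x_{is}+y_{i,s+1},
\]
which is exactly the $i$th row sum of $A_T$, namely $\mu_i$ by (\ref{lem612}).

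Finally I would check the column sums, using $u_1+\dots+u_m=t$. For $j\neq s,s+1$ we get $\sum_i b_{ij}=\sum_i a_{ij}=\la_j$, because the original matrix $(a_{ij})$ has column sums $\la$. For column $s$, $\sum_i b_{is}=\sum_i x_{is}-\sum_i u_i=(\la_s+t)-t=\la_s$, since $\sum_i x_{is}$ is the $s$th column sum of $A_T$, equal to $(\la(s,t))_s=\la_s+t$; similarly $\sum_i b_{i,s+1}=\sum_i y_{i,s+1}+\sum_i u_i=(\la_{s+1}-t)+t=\la_{s+1}$. Hence $B\in T_n(\mathbb{N})(\la,\mu)$, which completes the proof. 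I do not expect any genuine obstacle: the content is merely that passing from $\la$ to $\la(s,t)$ transfers $t$ boxes from column $s+1$ into column $s$, the $u_i$ record how that transfer is redistributed among the rows, and the conditions defining $X'$ are exactly what the nonnegativity of the reconstructed entries amounts to; the only point requiring care is keeping straight the row and column sums of $A_T$ against those of the reconstructed matrix $B$.
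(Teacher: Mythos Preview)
Your proof is correct and follows essentially the same approach as the paper's: both establish injectivity from the fact that the relations (\ref{lem613}) determine $B$ uniquely, and surjectivity by reconstructing $B$ from $(u_1,\dots,u_m)$ and checking $B\in T_n(\mathbb{N})(\la,\mu)$. Your version is in fact more explicit than the paper's in verifying upper-triangularity and the row and column sums (the paper simply invokes (\ref{lem612}) and $u_1+\dots+u_m=t$ in a single sentence); the only step you leave implicit is the well-definedness of the forward map, i.e.\ that $(u_1,\dots,u_m)\in X'$ whenever $(B,u_1,\dots,u_m)\in X$, but this is immediate from $b_{is}\ge 0$ and the last equation in (\ref{lem613}).
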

		\begin{proof}
			Suppose $(B,u_1,\dots,u_m) \in X$. From the second equality of (\ref{lem613}) we have $x_{is}-u_i=b_{is} \ge 0$. Thus (\ref{lem614}) is satisfied and $(u_1,\dots u_m) \in X'$.
			
			Suppose $(B,u_1,\dots,u_m),  (C,u_1,\dots,u_m)\in X$. From the first three equalities of (\ref{lem613}) we have $B=C$. Hence the map $X \to X'$ is 1-1.
			
			Suppose $(u_1,\dots,u_m) \in X'$. Define $B=(b_{ij})$ from the first three equalities of (\ref{lem613}). From the inequalities of (\ref{lem614}) it follows that the entries of $B$ are nonnegative integers. From (\ref{lem612}) and the equality $u_1+\dots+u_m=t$, if follows that $B=(b_{ij}) \in T_{n}(\mathbb{N})(\la, \mu)$. Hence the map $X \to X'$ is onto.
		\end{proof}
		\subsubsection{The second lemma} Suppose $1 < s < m$.
		\begin{lemma}\label{lemma2}Suppose we have nonnegative integers
		\begin{itemize}
			\item $a_{ij}$, where $1 \le i \le j \le n$, satisfying (\ref{ss}),
			\item $t_1, \dots, t_{s-1}$ satisfying (S$'$1),
			\item $q_{s+2}, \dots, q_n$ satisfying (\ref{qin}), and
			\item $k_{s+2}, \dots, k_n$ satisfying (\ref{qsum}).
		\end{itemize}
		Then the tuple $w=(a_{s+1,s+1}, \dots, a_{s+1,n},k_{s+2}, \dots, k_n)$ satisfies (S$'$2) and (S$'$5) if and only if it satisfies (S$'$5b) and (S$'$6).
	\end{lemma}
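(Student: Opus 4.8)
The plan is to show that, after performing the substitutions dictated by (\ref{ss}) and (\ref{qsum}), every condition appearing on one side of the claimed equivalence either becomes automatically true under the standing hypotheses or is literally identical to a condition on the other side. Throughout I write $q=q_{s+2}+\dots+q_n$ and $k=k_{s+2}+\dots+k_n$ (consistently with Step~2 of the proof of Theorem~\ref{nonv1}), and I use that by (\ref{qsum}) we have $a_{s+1,j}=q_j-k_j$ for $j\ge s+2$, so that $\sum_{j=s+2}^{n}a_{s+1,j}=q-k$.

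First I would dispose of condition (S$'$2a) and of the link between (S$'$5a) and (S$'$6a). For $j\ge s+2$, equation (\ref{ss}) gives $a_{sj}=\lambda_j-\sum_{i=1,\,i\neq s,s+1}^{j}a_{ij}-a_{s+1,j}=\lambda_j-\sum_{i=1,\,i\neq s,s+1}^{j}a_{ij}-(q_j-k_j)$, so the inequality $k_j\le a_{sj}$ of (S$'$2a) is equivalent to $q_j\le\lambda_j-\sum_{i=1,\,i\neq s,s+1}^{j}a_{ij}$, which is exactly (\ref{qin}); hence (S$'$2a) holds for free. Substituting $\sum_{j=s+2}^{n}a_{s+1,j}=q-k$ into the equality $a_{s+1,s+1}+\sum_{j=s+2}^{n}a_{s+1,j}=\mu_{s+1}$ of (S$'$5a) gives precisely $a_{s+1,s+1}=\mu_{s+1}-q+k$, which is (S$'$6a); thus (S$'$5a) and (S$'$6a) are equivalent (given (\ref{qsum})).

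It remains to check that, once (S$'$6a) is assumed, (S$'$2b) is equivalent to (S$'$6b). For this I would apply (\ref{ss}) to column $s+1$: since the index set $\{1\le i\le s+1:\ i\neq s\}$ equals $\{1,\dots,s-1,s+1\}$, we obtain $a_{s,s+1}=\lambda_{s+1}-\sum_{i=1}^{s-1}a_{i,s+1}-a_{s+1,s+1}$. Inserting (S$'$6a), the left inequality $t-\tau_{s-1}-a_{s,s+1}\le k$ of (S$'$2b) reduces to $t-\tau_{s-1}-\lambda_{s+1}+\sum_{i=1}^{s-1}a_{i,s+1}+\mu_{s+1}\le q$, which is the left inequality of (S$'$6b), while the right inequality $k\le a_{s+1,s+1}$ of (S$'$2b) reduces to $q\le\mu_{s+1}$, the right inequality of (S$'$6b). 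Putting the three observations together: condition ``(S$'$2) and (S$'$5)'' is the conjunction of (S$'$2a), (S$'$2b), (S$'$5a), (S$'$5b); since (S$'$2a) is automatic, (S$'$5a)$\Leftrightarrow$(S$'$6a), and (S$'$2b)$\Leftrightarrow$(S$'$6b) under (S$'$6a), this is equivalent to the conjunction of (S$'$5b), (S$'$6a), (S$'$6b), i.e.\ to ``(S$'$5b) and (S$'$6)'', which is the assertion. I do not anticipate a genuine obstacle; the only point requiring attention is the bookkeeping of the summation ranges — especially that $\sum_{i=1,\,i\neq s}^{s+1}$ splits off the term $a_{s+1,s+1}$ from $\sum_{i=1}^{s-1}a_{i,s+1}$ — together with the trivial remark that (S$'$5b) is carried unchanged through both sides.
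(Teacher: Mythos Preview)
Your proof is correct and follows essentially the same route as the paper: both arguments use (\ref{ss}) and (\ref{qsum}) to rewrite (S$'$2a) as (\ref{qin}) (hence automatic), to identify (S$'$5a) with (S$'$6a), and to convert (S$'$2b) into (S$'$6b) once $a_{s+1,s+1}=\mu_{s+1}-q+k$ is known. If anything your write-up is slightly cleaner, since you argue each condition-equivalence once rather than splitting into forward and backward directions; the paper's line ``use (S$'$5b) for $j=s+1$'' in the (S$'$2b)$\Rightarrow$(S$'$6b) chain is really the identity $a_{s,s+1}+a_{s+1,s+1}=\lambda_{s+1}-\sum_{i=1}^{s-1}a_{i,s+1}$ coming from (\ref{ss}), which is exactly what you invoke directly.
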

	\begin{proof} We recall the notation \[\tau_s=t_1 + \dots + t_{s-2},  \  q=q_{s+2}+\dots+q_n,                     \ k  = k_{s+2} + \dots + k_n.\]Let us begin with an equation that will be used in both directions of the proof of the lemma. By summing eqs. (\ref{qsum}) with respect to $j=s+2, \dots, n$, we obtain \begin{equation}\label{both}\sum_{j=s+2}^na_{s+1,j}+k=q.\end{equation}

Suppose $w$ satisfies (S$'$2) and (S$'$5).  Using (S$'$5a) and (\ref{both}) we get $\mu_{s+1}-a_{s+1,s+1}+k=q$. Hence  $a_{s+1,s+1}=\mu_{s+1}-q+k$ which is eq. (S$'$6a).

From (S$'$2b) we have \begin{align*}&t-\tau_{s-1}-a_{s,s+1} \le k \le a_{s+1s+1} \Rightarrow (\mathrm{add} \  q-k) \\ &
	t-\tau_{s-1}-a_{s,s+1}+q-k \le q \le a_{s+1,s+1}	+q-k \Rightarrow (\mathrm{use \  (S'6a)}) \\&
	t-\tau_{s-1}-a_{s,s+1}-a_{s+1,s+1}+\m_{s+1} \le q \le \mu_{s+1} \Rightarrow (\mathrm{use \  (S'5b) \  for} \  j=s+1) \\&
	t-\tau_{s-1}-a_{s,s+1}-\lambda_{s+1}+\sum_{i=1, i \neq s,s+1}^{s+1}a_{i,s+1} \le q \le \mu_{s+1} \Rightarrow \\&
	t-\tau_{s-1}-\lambda_{s+1}+\sum_{i=1}^{s-1}a_{i,s+1} \le q \le \mu_{s+1}.
\end{align*}
Hence we have shown (S$'$6b).

Conversely, suppose $w$ satisfies (S$'$5b) and (S$'$6). Then the implications in the proof of the previous paragraph may be reversed. Thus we have  (S$'$2b). 

Using (\ref{ss}) we have \begin{align*}
a_{sj}&=\lambda_j- \sum_{i=1, i \neq s}^{j}a_{ij} \\&=
\lambda_j- \sum_{i=1, i \neq s, s+1}^{j}a_{ij}-a_{s+1,j} \  \   (\mathrm{use \  (\ref{qsum})}) \\&=\lambda_j- \sum_{i=1, i \neq s, s+1}^{j}a_{ij}-q_{j} +k_j \  \   (\mathrm{use \  (\ref{qin})})\\&
\ge k_j,
\end{align*}
for  all $j \ge s+2$. Hence we have shown (S$'$2a).

Finally, from (S$'$6a) we have \begin{align*}a_{s+1,s+1}&=\mu_{s+1}-q+k \; \;  (\mathrm{use \  (\ref{both})})\\&
	=\mu_{s+1} -\sum_{j=s+2}^n a_{s+1,j}.	\end{align*}
Hence $\sum_{j=s+1}^{n} a_{s+1,j}=\mu_{s+1}$ and we have proven (S$'$5a). \end{proof}
		
	\subsubsection{The third lemma}
	Let us recall the setup of Case 2, Step 2 of the proof of Theorem \ref{nonv1}. Suppose $1 < s < m$. We have $a_{ij} \in \mathbb{N}$, where $1 \le i \le j \le n$ and $i,j \notin \{s, s+1\}$, that satisfy (S$'$4). We have $t_1,\dots,t_{s-1} \in \mathbb{N}$ that satisfy (S$'$1a). Also we have $q_{s+2},\dots,q_n \in \mathbb{N}$ satisfying (S$'$6b) and (\ref{qin}). 
	
	Let $B$ be the set of all sequences  \[(a_{s+1,s+1},\dots,a_{s+1,n},k_{s+2},\dots,k_{n})\] of nonnegative integers that satisfy (S$'$5b), (S$'$6a), (\ref{qsum}) and let $B'$ be the set of all sequences  \[(k_{s+2},\dots,k_{n})\] of nonnegative integers such that \begin{equation}\label{inlemma3} k_j \le q_j  \  (j=s+2,\dots,n) \  \text{and} \  k \le \lambda_{s+1}-\sum_{i=1}^{s-1}a_{i,s+1}-\mu_{s+1}+q.\end{equation}
	Recall the notation \begin{center}
		$k= k_{s+2}+\dots+k_n, \  q=q_{s+2}+\dots+q_n$ and $\tau_i=t_1+\dots+t_i$.
	\end{center}
	\begin{lemma}\label{lemma3}The map $f:B \to B'$, \[(a_{s+1,s+1},\dots,a_{s+1,n},k_{s+2},\dots,k_{n}) \mapsto (k_{s+2},\dots,k_{n}),\] is a bijection.
		\begin{proof} Let $x=(a_{s+1,s+1},\dots,a_{s+1,n},k_{s+2},\dots,k_{n}) \in B$. From (\ref{qsum}), $k_j=q_j-a_{s+1j} \le q_j$. Also, from (S$'$5b)  for $j=s+1$ and from (S$'$6a) we have $k+\mu_{s+1}-q \le \lambda_{s+1}-\sum_{i=1}^{s-1}a_{i,s+1}$ from which the second inequality of (\ref{inlemma3}) follows. Hence $f(B) \subseteq B'$. Clearly $f$ is injective. If $(k_{s+2},\dots,k_{n}) \in B'$, define $a_{s+1,s+1}$ from (S$'$6a) and define $a_{s+1,j}$ from (\ref{qsum}). Then $a_{s+1,s+1} \ge \m_{s+1}-q \ge 0$ by the second inequality in (S$'$6b). From the first inequality of (\ref{inlemma3}) we have $a_{s+1,j} \ge 0$ for all $j=s+2,\dots,n$. From (S$'$6a) and the second inequality of (\ref{inlemma3}) it follows that $a_{s+1,s+1} \le \lambda_{s+1} -\sum_{i=1, i \neq s,s+1}^{s+1}a_{i,s+1}$ and hence (S$'$5b) holds for $j=s+1$. From (\ref{qin}) we have $a_{s+1,j}=q_j-k_j \le \lambda_j-\sum_{i=1, i \neq s,s+1}^{j}a_{ij}$, and hence 
			(S$'$5b) holds for $j=s+2,\dots,n$. We have shown that (S$'$5b) holds. Note that (S$'$6a) and (\ref{qsum}) hold by definition. Hence we have established that $(a_{s+1,s+1},\dots,a_{s+1,n},$ $k_{s+2},\dots,k_n)$ $ \in B$. This proves that $f$ is surjective.
		\end{proof}
	\end{lemma}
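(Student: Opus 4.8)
The plan is to exhibit the inverse of $f$ explicitly, so that the argument reduces to three routine checks: that $f$ lands in $B'$, that it is injective, and that it is surjective. For the first, take $x=(a_{s+1,s+1},\dots,a_{s+1,n},k_{s+2},\dots,k_n)\in B$; since the $a_{s+1,j}$ are nonnegative, relation (\ref{qsum}) gives $k_j=q_j-a_{s+1,j}\le q_j$ for $j\ge s+2$, which is the first family of inequalities in (\ref{inlemma3}). For the remaining one I would instantiate (S$'$5b) at $j=s+1$, where the sum $\sum_{i=1,\,i\neq s,s+1}^{s+1}a_{i,s+1}$ collapses to $\sum_{i=1}^{s-1}a_{i,s+1}$, giving $a_{s+1,s+1}\le \lambda_{s+1}-\sum_{i=1}^{s-1}a_{i,s+1}$; substituting the value $a_{s+1,s+1}=\mu_{s+1}-q+k$ from (S$'$6a) and rearranging yields exactly $k\le \lambda_{s+1}-\sum_{i=1}^{s-1}a_{i,s+1}-\mu_{s+1}+q$. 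Injectivity is then immediate: for $x\in B$ the entries $a_{s+1,j}$ with $j\ge s+2$ are recovered from $(k_{s+2},\dots,k_n)$ via (\ref{qsum}) and $a_{s+1,s+1}$ via (S$'$6a), so $x$ is determined by $f(x)$.

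The substance of the lemma is surjectivity. Given $(k_{s+2},\dots,k_n)\in B'$, I would set $a_{s+1,s+1}:=\mu_{s+1}-q+k$ and $a_{s+1,j}:=q_j-k_j$ for $j\ge s+2$, so that (S$'$6a) and (\ref{qsum}) hold by construction, and let $x$ denote the resulting tuple. It then remains to verify $x\in B$: $a_{s+1,s+1}\ge 0$ because (S$'$6b) contains the bound $q\le\mu_{s+1}$; $a_{s+1,j}\ge 0$ for $j\ge s+2$ by the first inequalities of (\ref{inlemma3}); condition (S$'$5b) at $j=s+1$ is precisely the rearrangement above of the second inequality of (\ref{inlemma3}), read in reverse; and (S$'$5b) for $j\ge s+2$ follows from $a_{s+1,j}=q_j-k_j\le q_j$ together with (\ref{qin}). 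Hence $f(x)=(k_{s+2},\dots,k_n)$ with $x\in B$, so $f$ is onto, which completes the bijection.

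I do not anticipate a genuine obstacle here: the lemma is a change-of-variables bookkeeping statement, and the only points needing care are keeping the index ranges in the sums of (S$'$5b), (S$'$6b) and (\ref{qin}) straight, and noticing that (S$'$6b) is exactly the hypothesis that forces $\mu_{s+1}-q\ge 0$, hence the nonnegativity of $a_{s+1,s+1}$, in the surjectivity step.
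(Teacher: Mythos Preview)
Your proposal is correct and follows essentially the same route as the paper's own proof: you verify $f(B)\subseteq B'$ via (\ref{qsum}) and the instance $j=s+1$ of (S$'$5b) combined with (S$'$6a), argue injectivity from the fact that (S$'$6a) and (\ref{qsum}) recover the $a_{s+1,j}$, and prove surjectivity by defining $a_{s+1,s+1}$ and $a_{s+1,j}$ through those same relations and checking nonnegativity and (S$'$5b) using (S$'$6b), (\ref{inlemma3}) and (\ref{qin}). Your write-up is in fact slightly more explicit about injectivity and about the reversibility of the rearrangement giving (S$'$5b) at $j=s+1$, but there is no substantive difference.
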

	
		\subsubsection{The fourth lemma}Let us recall the setup and relevant notation of Case 2, Step3 of the proof of Theorem \ref{nonv1}. We have $a_{ij} \in \mathbb{N}$, where $\{i,j\} \notin \{s,s+1\}$, and we have $u_1,\dots,u_{s-1} \in \mathbb{N}$ as in (\ref{631}). Let $u=u_1+\dots+u_{s-1}$. Define the set $C$ consisting of all sequences of nonnegative integers \[(t_1,\dots,t_{s-1},a_{1s}, a_{1,s+1},\dots,a_{s-1,s}, a_{s-1,s+1})\] that satisfy (S$'$1a), (S$''$2) and (S$''$3).  Define the set $C'$ consisting of all sequences of nonnegative integers \[(t_1,\dots,t_{s-1})\] that satisfy (S$''$4). 
	
	For later use, if $(t_1,\dots,t_{s-1},a_{1s},$ $a_{1,s+1},\dots,a_{s-1,s}, a_{s-1,s+1}) \in C$, define \begin{center}
		$\alpha(i)=\lambda_{s+1}-(a_{1,s+1}+\dots+a_{i,s+1})$  for $i=1,\dots,s-1$.
	\end{center}
	\begin{lemma}\label{lemma4}
		The map $g:C \to C'$ is a bijection, where \[(t_1,\dots,t_{s-1},a_{1s}, a_{1,s+1},\dots,a_{s-1,s}, a_{s-1,s+1}) \mapsto (t_1,\dots,t_{s-1}).\]
	\end{lemma}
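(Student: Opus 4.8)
The plan is to construct the inverse of $g$ explicitly, the crucial point being that a tuple in $C$ is determined by its image under $g$ together with the data held fixed in (\ref{631})---the entries $a_{ij}$ with $i,j\notin\{s,s+1\}$ and the integers $u_1,\dots,u_{s-1}$, which, as in (\ref{631}), we take in the range for which $C$ is nonempty. Indeed, for $i\le s-1$ condition (S$''$2a) reads $a_{is}+a_{i,s+1}=\mu_i-\sum_{j=i,\,j\neq s,s+1}^{n}a_{ij}=:\sigma_i$, a quantity depending only on the fixed data, while (S$''$3) reads $a_{is}=u_i-t_i$; hence necessarily $a_{is}=u_i-t_i$ and $a_{i,s+1}=\sigma_i-u_i+t_i$. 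In particular $g$ is injective, and it remains to check that, for a tuple of nonnegative integers $(t_1,\dots,t_{s-1})$, the numbers $a_{is}=u_i-t_i$ and $a_{i,s+1}=\sigma_i-u_i+t_i$ are nonnegative and satisfy (S$'$1a) and (S$''$2) if and only if $(t_1,\dots,t_{s-1})\in C'$.

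For this I would translate each constraint. Nonnegativity of $a_{is}$ is exactly $t_i\le u_i$; since $C$ is nonempty, $u_i\le\sigma_i$ for each $i$, so $a_{i,s+1}\ge0$ automatically, and the bound $t_i\le a_{i,s+1}$ of (S$'$1a) reduces to $u_i\le\sigma_i$, hence is automatic as well. Summing $a_{is}=u_i-t_i$ over $i$, the first inequality of (S$''$2b) becomes $u-\tau_{s-1}\le\lambda_s$ and nonnegativity of $\sum_i a_{is}$ becomes $u-\tau_{s-1}\ge0$; together these are the middle condition of (S$''$4). The bound $t_i\le t-\tau_{i-1}$ of (S$'$1a) telescopes to $\tau_{s-1}\le t$, the first condition of (S$''$4): if all these bounds hold then $\tau_i=\tau_{i-1}+t_i\le t$ for each $i$; conversely $\tau_{s-1}\le t$ gives $\tau_i\le t$, i.e.\ $t_i\le t-\tau_{i-1}$, for all $i$.

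The only non-routine points that remain are the lower bound of (S$'$1a), $t_i\ge t-\tau_{i-1}-\sum_{u=i+1}^{s+1}a_{u,s+1}$, and the second inequality of (S$''$2b), $\sum_i a_{i,s+1}\le\lambda_{s+1}$. For both I would substitute $a_{u,s+1}=\sigma_u-u_u+t_u$ ($u\le s-1$) and use the auxiliary partial sums $\alpha(i)=\lambda_{s+1}-(a_{1,s+1}+\dots+a_{i,s+1})$ together with the row- and column-sum relations (S$''$1) and (S$''$2a), rewriting each inequality as a relation among $t$, $\tau_{i-1}$, $u$ and the $\alpha(i)$ that holds for every $(t_1,\dots,t_{s-1})\in C'$. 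This bookkeeping---where the full content of (\ref{631}) is used---is the main obstacle; once it is done we have $C=g^{-1}(C')$ with $g$ injective, so $g$ is a bijection. This completes the proof of Lemma \ref{lemma4}, and with it the proofs of Lemma \ref{nonvlemma} and Theorem \ref{nonv1}.
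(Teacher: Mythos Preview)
Your overall strategy---reconstruct $a_{is}=u_i-t_i$ and $a_{i,s+1}=\sigma_i-u_i+t_i$ from $(t_1,\dots,t_{s-1})$ and check the constraints---is exactly the paper's. You also correctly exploit the nonemptiness of $C$ to obtain $u_i\le\sigma_i$, and the equivalences you work out for $a_{is}\ge0$, the first half of (S$''$2b), and the upper bounds in (S$'$1a) are right.

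The gap is precisely where you flag it: the lower bound of (S$'$1a) and the inequality $\sum_{i=1}^{s-1}a_{i,s+1}\le\lambda_{s+1}$. These do \emph{not} follow from (S$''$4) together with the row/column relations (S$''$1), (S$''$2a) and the partial sums $\alpha(i)$ alone, as your sketch suggests. Concretely, $\sum_i a_{i,s+1}=\sum_i\sigma_i-u+\tau_{s-1}$, and nothing in (S$''$4) or (S$''$1) bounds $\sum_i\sigma_i$ against $\lambda_{s+1}$; that bound has to come from an actual witness in $C$. The paper fixes such a witness $(t'_i,a'_{is},a'_{i,s+1})$ and observes the key invariants
\[
a'_{is}+a'_{i,s+1}=a_{is}+a_{i,s+1},\qquad a'_{i,s+1}-t'_i=a_{i,s+1}-t_i,
\]
whence $\sum_i a'_{i,s+1}-\tau'_{s-1}=\sum_i a_{i,s+1}-\tau_{s-1}$. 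Then the \emph{lower} inequality in (S$'$1a) applied to the witness (for $i=s-1$) gives $\sum_i a'_{i,s+1}-\tau'_{s-1}\le\lambda_{s+1}-t$, which transfers to the unprimed quantities and, with $\tau_{s-1}\le t$, yields $\sum_i a_{i,s+1}\le\lambda_{s+1}$. A parallel use of the witness, index by index, gives the lower bound of (S$'$1a). So the missing ingredient in your argument is not bookkeeping but the specific invariant above and the use of (S$'$1a) for the witness; without naming these, the surjectivity step is not established.
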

\begin{proof}
The first and third inequalities of (S$''$4) follow immediately from (S$'$1a) and (S$''$3) respectively. By summing equations (S$''$3)  we obtain $u=\sum_{i=1}^{s-1}a_{is}+\tau_{s-1}$ and using the first inequality of (S$''$2b) we obtain $u-\tau_{s-1}= \sum_{i=1}^{s-1}a_{is} \le \lambda_s$. Hence (S$''$4) is satisfied which means that $\im g \subseteq C'$.

 Suppose $x,x' \in C$, where $x=(t_1,\dots,t_{s-1},a_{1s}, a_{1,s+1},\dots,a_{s-1,s}, a_{s-1,s+1})$ and $x'=(t_1,\dots,t_{s-1},a'_{1s},$ $ a'_{1,s+1},$ $\dots,a'_{s-1,s}, a'_{s-1,s+1})$. From (S$''$3) we have $a_{is}=u_i-t_i = a'_{is}$ for all $i=1,\dots,s-1$. For each such $i$, using (S$''$2a) and what we just showed, \[a_{i,s+1}=\mu_i -\sum_{j=i, \  j \neq s, s+1}^{n}a_{ij}-a_{is} = \mu_i -\sum_{j=i, \  j \neq s, s+1}^{n}a_{ij}-a'_{is} =a'_{i,s+1}.\]
Thus $x=x'$ and $g$ is injective.

Surjectivity is a bit more demanding. Let $y=(t_1,\dots,t_{s-1}) \in C'$ and define for every $i=1,\dots,s-1,$
\begin{equation}\label{defa}
	a_{is}=u_i-t_i, \  
	a_{i,s+1}=\mu_i-\sum_{j=i, \  j \neq s, s+1}^{n}a_{ij}-a_{is}.
\end{equation}
We intend to show that $(t_1,\dots,t_{s-1},a_{1s},a_{1,s+1},\dots,a_{s-1,s}, a_{s-1,s+1}) \in C$, that is $a_{ij} \ge 0$ $ (j=s,s+1)$ and (S$'$1a), (S$''$2), (S$''$3) hold. 

It is clear from the definitions that (S$''$2a) and (S$''$3) hold.

From (\ref{defa}) and the last inequality of (S$''$4), we have $a_{is} \ge 0$. Moreover,  $\sum_{i=1}^{s-1}a_{is}=\sum_{i=1}^{s-1}(u_i-t_i)=u-\tau_{s-1} \le \lambda_{s}$, where the last inequality is due to (S$''$4). Hence the first inequality of (S$''$2b) holds. 

From the hypothesis on $u_1,\dots,u_{s-1}$, there exist \[t'_1,\dots,t'_{s-1},a'_{1s}, a'_{1,s+1},\dots,a'_{s-1,s}, a'_{s-1,s+1} \in \mathbb{N}\] such that \begin{align}\label{1}
	&t-\tau'_{i-1}-\alpha'(i) \le t'_i \le \min\{a'_{i,s+1}, t-\tau'_{i-1}\}, \  i=1,\dots,s-1, \\\label{2}
	&\sum_{j=i, \  j \neq s, s+1}^{n}a_{ij}+a'_{is}+a'_{i,s+1}=\mu_i, \  i=1,\dots,s-1,\\\label{3}
	&\sum_{i=1}^{s-1} a'_{is} \le \lambda_s, \  \sum_{i=1}^{s-1} a'_{i,s+1} \le \lambda_{s+1}, \  \\\label{4}
	&u_i=a'_{is}+t'_i, \  i \le s-1. \\\label{44}
	&t \le \mu_s - \lambda_s +\sum_{i=1}^{s-1}a'_{is}+\tau_{s-1}=\mu_s-\lambda_s-u.
\end{align}
We note the following equalities,
\begin{align}\label{5}
	&a'_{is}+a'_{i,s+1}=a_{is}+a_{i,s+1}, \  i\le s-1, \\\label{6}
	&a'_{is}+t'_{i}=a_{is}+t_{i}, \  i\le s-1,\\\label{7}
	&a'_{i,s+1}-t'_{i}=a_{i,s+1}-t_{i}, \  i\le s-1,\\\label{8}
	&\sum_{i=1}^{s-1}a'_{i,s+1}-\tau'_{s-1}=\sum_{i=1}^{s-1}a_{i,s+1}-\tau_{s-1}.
\end{align}
Indeed, (\ref{5}) follows from (\ref{2}) and the second equality in (\ref{defa}). The equality (\ref{6}) follows from (\ref{4}) and the first equality in (\ref{defa}). From (\ref{5}) and (\ref{6}) we have (\ref{7}) and by summing (\ref{7}) for $i\le s-1$ we obtain (\ref{8}).

From (\ref{7}) we have $a_{i,s+1}= (a'_{i,s+1} - t'_i)+t_i$ and thus 
$a_{is+1} \ge 0 $ for all $i \le s-1$ by the second inequality of (\ref{1}).

From the first inequality of (\ref{1}) and the definition of $\alpha'{(s-1)}$, we have $t-\tau'_{s-2}-\lambda_{s+1}+\sum_{i=1}^{s-1}a'_{i,s+1} \le t'_{s-1}$ and thus  $\sum_{i=1}^{s-1}a'_{i,s+1} - \tau'_{s-1} \le \lambda_{s+1} -t$.
Hence (\ref{8}) implies that	$\sum_{i=1}^{s-1}a_{i,s+1} - \tau_{s-1} \le \lambda_{s+1} -t$. Thus $\sum_{i=1}^{s-1}a_{i,s+1} \le \la_{s+1} -(t-\tau_{s-1}) \le \la_{s+1}$, where the last inequality comes from $t-\tau_{s-1} \ge 0$ of (S$''$4). We have shown the second inequality in (S$''$2b). It remains to be shown that (S$'$1a) holds.
	
	From the first inequality of (\ref{1}) and the definition of $\alpha'{(i)}$, we have $t-\tau'_{i-1}-\lambda_{s+1}+\sum_{u=1}^{i}a'_{i,s+1} \le t'_{i}$, where $i \le s-1$. From this and (\ref{5}) we obtain \[t-\lambda_{s+1} \le \sum_{u=1}^ia'_{us}-\sum_{u=1}^i(a'_{u,s+1}+a'_{us})+\tau'_i=\sum_{u=1}^ia'_{us}-\sum_{u=1}^i(a_{u,s+1}+a_{us})+\tau'_i.\]
 From the above equality and (\ref{6}) we have 
	\[t-\lambda_{s+1} +\sum_{u=1}^{i}a_{u,s+1} \le \sum_{u=1}^ia'_{us}-\sum_{u=1}^{i}a_{us}+
	\tau'_i=\tau_i\]
	and thus $t-\alpha(i)-\tau_{i-1} \le t_i$. We have shown the left inequality of (S$'$1a).
	
	From (\ref{1}) and (\ref{5}) we have \[a'_{is}+t'_i \le a'_{is}+a'_{i,s+1} = a_{is}+a_{i,s+1}.\]
	From (\ref{6}) and the above we have $t_i = a'_{is}+t'_i-a_{is} \le a_{is+1}$ which is one of the right inequalities of (S$'$1a). To show the other, note that from (S$''$4),
	$\tau_i \le \tau_{s-1} \le t $ for $i \le s-1 $, so $t_i \le t - \tau_{i-1} $, for all $i=1, \dots, s-1.$ 
\end{proof}
	\section{Acknowledgments}
	We are very grateful to the anonymous referee for many detailed comments and suggestions that helped to greatly improve the paper. We thank H. H. Andersen for pointing out an error in the Introduction of an earlier draft of the paper. The first author acknowledges the support of the Department of Mathematics, University of Athens.


\begin{thebibliography}{99}
	\bibitem{AB}Akin K. and Buchsbaum D., Characteristic-free representation theory of the general linear group II: Homological considerations, Adv. in Math. \textbf{72} (1988), 172--210.
	\bibitem{ABW}Akin K., Buchsbaum, D. and Weyman, J., Schur functors and Schur complexes, Adv. in Math. \textbf{44} (1982), 207--278.
\bibitem{BG}Bowman C. and Gianneli E., The integral isomorphism behind row removal phenomena for
Schur algebras, Math. Proc. Camb. Philos. Soc. \textbf{167} (2019), 209–228.
	\bibitem{CL}Carter R.W. and Lusztig G., On the modular representations of the general linear and symmetric groups. Math Z. \textbf{136} (1974), 193--242. 
	\bibitem{CP} Carter R. W. and  and Payne M. T. J., On homomorphisms between Weyl modules and Specht modules,
	Math. Proc. Cambridge Philos. Soc. \textbf{87} (1980), 419--425.
	\bibitem{CPSK}Cline E., Parshall B., Scott L. and van der Kallen W., Rational and generic cohomology, Invent. Math. \textbf{39} (1977), 143--163.
	\bibitem{D}Donkin S., Tilting modules for algebraic groups and finite dimensional algebras, in The handbook of tilting theory, D. Happel and H. Krause, eds., Cambridge University Press (2004), 169–208.
	\bibitem{EM}Ellers H. and Murray J., Carter–Payne homomorphisms and branching rules for endomorphism rings of Specht modules,  J. of Group Theory, \textbf{13} (2010), 477--501.
	\bibitem{FHKD}Fang M., Henke A., Koenig S. and Donkin S., Comparing $GL_n$ representations by characteristic-free isomorphisms between generalized Schur algebras, Forum Math. \textbf{20} (2008), 45--79.
	 \bibitem{F}Fayers M., http://www.maths.qmul.ac.uk/~mf/homcalculator.html
	 \bibitem{FL}Fayers M. and Lyle S., Row and column removal theorems for homomorphisms between
	Specht modules, J. Pure Appl. Algebra \textbf{185} (2003), 147–164.
	\bibitem{Gi}Gill C., Young module multiplicities, decomposition numbers and the indecomposable Young permutation modules, J. Algebra Appl. \textbf{13}, 1350147 (2014).
	\bibitem{Gr}Green J. A., Polynomial Representations of $GL_n$, 2nd edition, LNM \textbf{830}, Springer, 2007.
	\bibitem{Ha}Harman N., Stability and periodicity in the modular representation theory of symmetric groups, arXiv:1509.06414v3.
	\bibitem{Hem2}Hemmer D. J., 'Frobenius twists' in the representation theory of the symmetric group, Proceedings of Symposia in Pure Mathematics \textbf{86} (2012), 187--200.
	\bibitem{HK}Henke A. and Koenig S., Relating polynomial GL(n)-representations in different degrees, J. fur Reine Angew. Math. \textbf{551} (2002), 219--235.
	\bibitem{Hen}Henke A., On p-Kostka numbers and Young modules, Eur. J. Comb. \textbf{26} (2005) 923--942.
	\bibitem{Jam}James G. D., The Representation Theory of the Symmetric Groups, LNM \textbf{682}, Springer, 1978.
	\bibitem{Jan}Jantzen J. C., Representations of Algebraic Groups, volume \textbf{107} AMS, Providence, RI, 2nd edition, 2003.
	\bibitem{Lam}Lam T.Y., Young diagrams, Schur functions, the Gale-Ryser theorem and a conjecture of Snapper, J. Pure Appl. Algebra \textbf{10} (1977), 81--94.
	\bibitem{MS3}Maliakas M. and Stergiopoulou D.-D., On homomorphisms into Weyl modules corresponding to partitions with two parts, Glasgow Math. J. \textbf{65} (2023), 272--283.
	\bibitem{MS4}Maliakas M. and Stergiopoulou D.-D., A periodicity theorem for extensions of Weyl modules, Math. Z. \textbf{307}, 52 (2024).
	\bibitem{NS}Nagpal R. and Snowden A., Periodicity in the cohomology of symmetric groups via divided powers, Proc. London Math. Soc. \textbf{116} (2018), 1244--1268.
	\bibitem{Ro}Rotman J., An Introduction to Homological Algebra, 2nd Edition, Springer, 2009.
	\bibitem{SY}Santana A. P. and  Yudin I., Characteristic-free resolutions of Weyl and Specht modules, Adv. Math. \textbf{229} (2012), 2578--2601.
\end{thebibliography}
\end{document}